\documentclass{article}

\usepackage[english]{babel}

\usepackage[letterpaper,top=2cm,bottom=2cm,left=3cm,right=3cm,marginparwidth=1.75cm]{geometry}

\usepackage{amsmath}
\usepackage{amsthm}
\usepackage{amsfonts}
\usepackage{amssymb}
\usepackage{graphicx}
\usepackage[colorlinks=true, allcolors=blue]{hyperref}
\usepackage{pdfpages}
\usepackage{comment}
\usepackage{dsfont}
\usepackage{ulem}
\usepackage{caption,subcaption}
\usepackage[export]{adjustbox}

\usepackage{authblk}

\newcommand{\one}{\mathds{1}}

\theoremstyle{plain}
\newtheorem{Th}{Theorem}[section]
\newtheorem{Lem}[Th]{Lemma}

\newtheorem{Prop}[Th]{Proposition}

\newcommand{\E}{\mathbb{E}}
\newcommand{\N}{\mathbb{N}}
\newcommand{\R}{\mathbb{R}}

\renewcommand{\P}{\mathbb{P}}
\newcommand{\RR}{\mathcal{R}}
\newcommand{\A}{\mathcal{A}}
\newcommand{\T}{\mathcal{T}}
\newcommand{\EE}{\mathcal{E}}
\newcommand{\Sb}{\overline{S}}
\newcommand{\Su}{\underline{S}}

\author[*,1]{Céline Bonnet}
\author[*,2]{Hélène Leman}
\affil[*]{ENSL, UMPA, CNRS UMR 5669, 69364 Lyon, France.}
\affil[1]{Email: celine.bonnet@ens-lyon.fr}
\affil[2]{Inria. Email: helene.leman@inria.fr}

\title{Site frequency spectrum of a rescued population under rare resistant mutations}
\date{}

\begin{document}
\maketitle

\begin{abstract}
    The aim of this article is to study the impact of resistance acquisition on the distribution of neutral mutations in a cell population under therapeutic pressure. The cell population is modeled by a bi-type branching process. Initially, the cells all carry type 0, associated with a negative growth rate. Mutations towards type $1$ are assumed to be rare and random, and lead to the survival of cells under treatment, i.e. type $1$ is associated with a positive growth rate, and thus models the acquisition of a resistance. Cells also carry neutral mutations, acquired at birth and accumulated by inheritance, that do not affect their type. We describe the expectation of the "Site Frequency Spectrum" (SFS), which is an index of neutral mutation distribution in a population, under the asymptotic of rare events of resistance acquisition and of large initial population. Precisely, we give asymptotically-equivalent expressions of the expected number of neutral mutations shared by both a small and a large number of cells. To identify the influence of relatives on the SFS, our work also lead us to study in detail subcritical binary Galton-Watson trees, where each leaf is marked with a small probability. As a by-product of this study, we thus provide the law of the generation of a randomly chosen leaf in such a Galton-Watson tree conditioned on the number of marks.
\end{abstract}

\noindent
\textbf{keywords:} Site Frequency Spectrum, rescue dynamics, sub and super-critical branching processes, Galton-Watson trees.

\medskip

\noindent
\textbf{MSC classes:} 60F05, 60J28, 60J80, 92D15.

\medskip

\section{Introduction}
\label{sec:intro}
The Site Frequency Spectrum (SFS) is a statistical object that records the distributions of some mutations in an evolving population along time and it has provided a simple means of understanding the evolutionary history of populations from genomic data (\cite{Gunnarssonetal2021}). Considering the increasing amount of genomic data collected, the SFS has thus become a key object to study. It has been studied in many forms (on sampled or unsampled populations, as a limit for large sample size, as a long time limit,...) with different models (Wright-Fisher, Moran, coalescent, semi-deterministic, birth and death process, Galton-Watson process,...). Recently, Dinh et al, in \cite{Dinhetal2020}, compared and discussed some of these different approaches using simulations with some mathematical analyses.

In our article, we are interested in understanding the impact of a rescue dynamics on the SFS associated to the accumulation of neutral mutation in a population of two types of individuals.
Previous works have studied distribution of mutations, produced by two-type (or more) branching processes as in \cite{AntalandKrapivsky2011}, \cite{Bozicetal2019}, \cite{Bravermanetal95}, \cite{CheekandAntal2018}, \cite{CheekandAntal2020}, \cite{Durrett2013}, \cite{Dinhetal2020}, \cite{TungandDurrett2021}, \cite{KurpasandKimmel2022}, among others. In those studies as in our article, each population is characterized by a growth rate, representing a type, and non-neutral mutations are non-reversible. Moreover, the individuals accumulate neutral mutations by inheritance and during their lifetime, whether at birth as in our case, or throughout life. 

However, a fundamental assumption regarding the growth of the initial population distinguishes our study from the previously cited works. Indeed, in these previous studies, all populations or at least initial populations  are assumed to be supercritical (or critical), i.e. the growth rates of individuals are positive (or null). In our work, the initial population is assumed to have a large size and is described by a sub-critical birth and death process (i.e. its size decreases exponentially fast). The second population, resulting by rare mutations from the first one, follows a supercritical process. Such dynamics are commonly called \textit{rescue} dynamics.

Studying \textit{rescue} dynamics is particularly important in the context of oncology. Indeed treatments, especially chemotherapy, exert significant selection pressures on cell populations and can thus favor the emergence of resistant populations. Our process aims at modeling this specific mechanism. Precisely, the first population models a cancer cells population sensitive to a given treatment. While the treatment is administered, cells can become resistant to it and form a second population which can increase even under the treatment (and thus can be rescued). This idea is justified by some evidence in oncology. For example, in \cite{Ollieretal2017}, Ollier et al studied the resistance to a chemotherapy, named temozolomide, in low-grade gliomas. Using longitudinal tumor size measurements and mathematical models, they show the correlation between this chemotherapy and the development of resistance for the patient in half of the cases.
Precisely, in half of the cases, a mathematical model considering the emergence of resistant cells while the treatment is given fits the data better than a model assuming that resistant cells are present before treatment.  \\

We will study the rescue dynamics in a multi-scale context. We will assume that initially there is no resistant cell and there is a large number $N$ of sensitive cells ($N>>1$). Each sensitive cell can become resistant at birth  with probability proportional to $\frac{1}{N^{\alpha}} $, with $0<\alpha \leq 1$. Such an assumption is commonly called a rare mutation assumption since the probability decreases to $0$ with $N$. In the same spirit, Durrett and Schweinsberg in \cite{DurrettandSchweinsberg2004} study the distribution of two specific mutations in a cell population and assume a recombination probability depending on the population size. In \cite{Bertoin2010}, such an assumption is made on neutral mutations' occurrence. Cheek and Antal in \cite{CheekandAntal2018} and \cite{CheekandAntal2020} also include such multi-scale assumption but they restricted their attention to the case where the expected number of mutational events is finite. On the contrary, in our study, we include both cases where the expected number of rescue events is finite or infinite, as this quantity is of order $N^{1-\alpha}$ with $\alpha\in (0,1]$. In the following, the resistant cells born from a rescue event will be called \textit{ancestral resistant} cells.\\

The SFS classifies mutations according to the number of cells that carry them. In order to describe the SFS in such a rescue dynamics context, we need to identify mutations carry by more than one \textit{ancestral resistant} cell. To this end, we show some general properties of the Galton-Watson tree associated with each initial sensitive cell. Previous works have studied rescue dynamics using multi-type branching processes as \cite{AzevedoandOlofsson2021}, \cite{DurrettandMoseley2010}, \cite{Iwasaetal2006}, \cite{Kendall1960}, \cite{Komarova2006}, \cite{FooandLeder2013}, \cite{OrrandUnckless2014}, among others. To our knowledge, none of them have been interested in describing the relationship between the \textit{resistant ancestral} cells. In \cite{SagitovandSerra2009}, authors use the Galton-Watson structure to capture times of rescue events due to an accumulation of mutations without looking at the relationship between the \textit{ancestral resistant} cells.\\

The paper is organized as follows. In Section \ref{sec:model}, we introduce the model, the multi-scale assumptions, and the definition of the SFS we are looking for. Then we present the main results. Section~\ref{sec:AR} is devoted to the study of the \textit{ancestral resistant} cells dynamics, which are resistant cells whose mother are sensitive cells. These cells play a key roll in our proofs, as we will split our quantities of interest, related to the SFS, into the contributions of neutral mutations that appeared before (resp. after) the occurrence of \textit{ancestral resistant} cells. In particular, we will detail in Section~\ref{sec:AR}, the law of occurrence of \textit{ancestral resistant} cells and the law of the number of neutral mutations they carry.
The proof of the main theorems are presented in Section~\ref{sec:SFS}. We start by studying the contributions of neutral mutations that appeared after the occurrence of \textit{ancestral resistant} cells in the subsection~\ref{ssec:sfsR}. Then we deal in Subsection~\ref{ssec:sfsS} with the contribution of these emerging after the appearance of \textit{ancestral resistant} cells. Finally, Section~\ref{sec:illus} include numerical illustration of the results and some discussions about these results.

\section{Model description}\label{sec:model}
We consider a population of cancer cells with cells sensitive to a treatment and cells resistant to it. Our model describes the number of sensitive cells $Z^N_0(t)$ at time $t\geq 0$, the one of resistant cells $Z^N_1(t)$ at time $t\geq 0$, and the number of \textit{neutral mutations} carried by each cell. A \textit{neutral mutation} refers to a mutation that has no impact on the birth and death rates of the cell under the treatment/medium in which it is observed. The initial state of the process is assumed to be 
\begin{equation*}
(Z^N_0(0),Z^N_1(0))=(N,0).
\end{equation*}
Cells carry no neutral mutation initially, and the process follows the following dynamics:
\begin{itemize}
    \item Each sensitive cell divides at rate $b_0$ and dies at rate $d_0$. We denote by $\lambda_0:=d_0-b_0> 0$ the absolute value of the growth rate of sensitive cells.
    \item Each resistant cell divides at rate $b_1$ and dies at rate $d_1$. Its growth rate, which is positive, is denoted by $\lambda_1:=b_1-d_1> 0$.
    \item At each division, the cell is replaced by two daughter cells:
    \begin{itemize}
        \item Each daughter cell inherits the neutral mutations of their mother in addition to an independent random number of neutral mutations $N_\omega$ with the following expectation:
        $$
         \E[N_{\omega}]=\omega/2\geq 0.$$
        \item Each of the two daughter cells may become resistant with probability $\gamma_N:=\gamma/N^{\alpha}$, with $0<\alpha\leq 1$, independently from one another. The parameter $\alpha$ models the rarity of the occurrence of resistances.
    \end{itemize} 
\end{itemize}
Hence notice that, considering all the dynamics, the (induced) growth rate of sensitive cells is $-(\lambda_0+2\gamma_Nb_0)$, which is negative.\\

 Finally, the dynamics of the process $Z^N(t)=(Z^N_0(t),Z^N_1(t))_{t\geq 0}$ (disregarding neutral mutations) is the one of a continuous-time Markov chain on $\N^2$ with the following rates: 
\begin{center}
\begin{tabular}{lllll}
    $(z_0,z_1)$ & $\mapsto$ & $(z_0+1,z_1)$ & at rate & $(1-\gamma_N)^2b_0z_0$ \\
      $(z_0,z_1)$ & $\mapsto$ & $(z_0-1,z_1)$ & at rate & $d_0z_0$ \\
       $(z_0,z_1)$ & $\mapsto$ & $(z_0,z_1+1)$ & at rate & $2\gamma_N(1-\gamma_N)b_0z_0+b_1z_1$ \\
       $(z_0,z_1)$ & $\mapsto$ & $(z_0-1,z_1+2)$ & at rate & $\gamma_N^2b_0z_0$ \\
       $(z_0,z_1)$ & $\mapsto$ & $(z_0,z_1-1)$ & at rate & $d_1z_1$ \\
\end{tabular}
\end{center}

Our aim is to describe the distribution of neutral mutations in the resistant cells population at a large time, precisely after the characteristic time of extinction of sensitive cells, i.e. $\log(N)/\lambda_0$. As previously mentioned, we will describe this distribution using the \textit{site frequency spectrum} (SFS) which counts, for all $i\in \N^*$ and $t\geq 0$, the number of neutral mutations carried by exactly $i$ resistant cells at time $t$. This sequence will be denoted by $(S_i(t))_{i\in \N, t\geq 0}$.

For clarity, we set an example starting from one sensitive cell on Figure \ref{figTree}. The SFS associated to this progeny is given by $S_1(t)=3$, $S_3(t)=1$, $S_7(t)=2$ and for all $i\notin \{1,3,7\}$, $S_i(t)=0$. Indeed, in this example, only $7$ resistant cells are alive at time $t$. All resistant cells at time $t$ carried mutations $\{1\}$ and $\{2\}$. Mutation $\{4\}$ is carried by three cells while mutations $\{7\}$, $\{8\}$ and $\{9\}$ are carried by only one resistant cell. We are only interested in resistant cells alive at time $t$, so mutations $\{3,5,6,10\}$ which are carried by only sensitive cells don't impact the SFS we are interested in.

\begin{figure}[ht]
  \centering
    \includegraphics[width=0.8\textwidth]{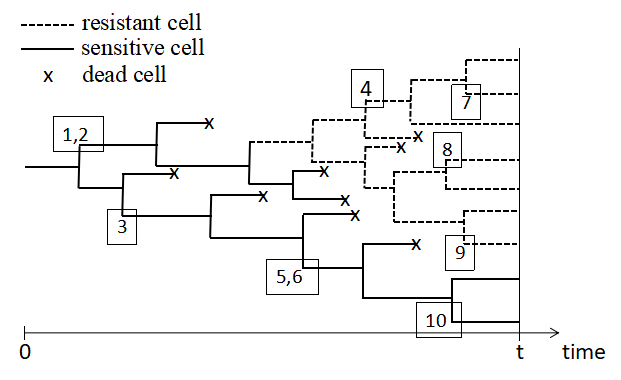}
  \caption{Example of progeny starting from one sensitive cell and including neutral mutations. Solid lines correspond to sensitive genealogy, dashed lines to resistant one, cross to dead cells and framed numbers to neutral mutations appeared at each division. }\label{figTree}
\end{figure}

Our aim is thus to describe the expectation of the SFS of the resistant population in a limit of large initial sensitive population, i.e. $N$ tends to $+\infty$, and in the timescale of the sensitive population extinction, i.e. we will set results for
$$
t_N:=t \log(N), \text{ with } t>0.
$$
Similarly to \cite{Dinhetal2020} and \cite{TungandDurrett2021}, $S_i^N(t)$ will be computed by separating the quantity into two different contributions: (1) $\Sb_i^N(t)$ that counts neutral mutations that appeared in a resistant cell 
and (2) $\Su_i^N(t)$ that counts neutral mutations that appeared in a sensitive cell, which are transmitted to resistant cells as a hitch-hiking effect (\cite{MaynardSmithandHaigh1974}).

The first result details the expected SFS for fixed $i$, i.e. it describes the number of mutations shared by a small number of resistant cells at time $t_N$. 
\begin{Th}\label{mainth}
For all $i\in \N$, $t\geq 0$,
\begin{equation}\label{def:I}
\E\left[S_i^N\big(t_N\big)\right] \underset{N\to \infty}{\sim} I(i)\frac{2b_0\gamma\omega}{\lambda_1+\lambda_0}N^{\lambda_1 t + 1-\alpha}, \text{ with } I(i):=\int_0^1\frac{1-y}{1-d_1y/b_1}y^{i-1}dy.
\end{equation}
\end{Th}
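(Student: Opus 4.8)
The plan is to decompose $S_i^N(t_N) = \Sb_i^N(t_N) + \Su_i^N(t_N)$ as announced in the text and to show that the dominant contribution of order $N^{\lambda_1 t + 1 - \alpha}$ comes entirely from neutral mutations that appeared \emph{in} resistant cells, i.e. from $\E[\Sb_i^N(t_N)]$, while $\E[\Su_i^N(t_N)]$ is of strictly smaller order (the hitch-hiking mutations, being inherited from sensitive cells whose population is subcritical, should be spread over too few resistant cells to contribute at the leading order for fixed $i$). So the core of the argument is an asymptotic evaluation of $\E[\Sb_i^N(t_N)]$.

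To evaluate $\E[\Sb_i^N(t_N)]$, I would first condition on the process of \textit{ancestral resistant} cells — their random times of appearance and their number — using the description provided in Section~\ref{sec:AR}. Since rescue events occur at rate $2\gamma_N(1-\gamma_N)b_0 Z_0^N(s) + \gamma_N^2 b_0 Z_0^N(s)$, and $\E[Z_0^N(s)] \approx N e^{-\lambda_0 s}$, the expected number of ancestral resistant cells born in $[s,s+ds]$ is $\approx 2\gamma b_0 N^{1-\alpha} e^{-\lambda_0 s}\,ds$; integrating against the timescale $t_N$ this produces the global factor $N^{1-\alpha}$ together with a constant involving $\int_0^\infty e^{-\lambda_0 s}\,ds$-type terms that must combine to give $2b_0\gamma/(\lambda_0+\lambda_1)$. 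Each ancestral resistant cell founds a supercritical birth–death clone; along that clone, neutral mutations appear at each division at expected rate $\omega$ per birth (each of two daughters gets $N_\omega$ with $\E[N_\omega]=\omega/2$), and a mutation born at a given cell at time $u$ after the clone's founding is carried at time $t_N$ by exactly the set of descendants of that cell alive at $t_N$. Thus $\E[\Sb_i^N(t_N)]$ reduces, via a Fubini/many-to-one (Campbell-type) computation, to integrating over the founding time $s$ of the clone, over the birth time $u$ of the mutation within the clone, the probability that a fixed resistant cell alive at time $u$ has exactly $i$ descendants alive at time $t_N - s$. For a supercritical linear birth–death process with rates $b_1, d_1$, this ``number of descendants alive at time $\tau$'' has an explicit (modified geometric) law, and after integrating the founding time to infinity (valid because the clone has had essentially infinite time to grow on the $\log N$ scale, while the probability of survival and the size distribution stabilize), the $u$-integral should collapse to the stated $I(i) = \int_0^1 \frac{1-y}{1-d_1 y/b_1} y^{i-1} dy$ after the substitution $y = $ (survival/extinction ratio as a function of elapsed time). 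The factor $N^{\lambda_1 t}$ emerges because conditioning a supercritical clone founded near time $0$ to be nonempty at time $t_N$ — or equivalently counting its size — contributes a factor growing like $e^{\lambda_1 t_N} = N^{\lambda_1 t}$.

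Concretely the key steps, in order, are: (1) write $S_i^N = \Sb_i^N + \Su_i^N$ and state that $\E[\Su_i^N(t_N)] = o(N^{\lambda_1 t + 1 - \alpha})$, deferring its proof to Subsection~\ref{ssec:sfsS}; (2) using the results of Section~\ref{sec:AR} on the law of occurrence of ancestral resistant cells, write $\E[\Sb_i^N(t_N)]$ as an integral over the founding time $s \in [0, t_N]$ of an ancestral resistant clone of (expected number of clones founded near $s$) $\times$ (expected number of neutral mutations carried by exactly $i$ cells at time $t_N$ in a resistant clone of age $t_N - s$); (3) plug in $\E[Z_0^N(s)] \sim N e^{-\lambda_0 s}$ and the birth/rescue rates to extract the prefactor $2 b_0 \gamma N^{1-\alpha}$; (4) for the per-clone quantity, use a many-to-one formula over the resistant genealogy together with the explicit generating function / size law of a linear birth–death process to write it as $\omega \int_0^{\tau} e^{\lambda_1 v}\, q_i(\tau - v)\, dv$ where $q_i(w)$ is the probability a cell alive now has exactly $i$ descendants alive a time $w$ later and $e^{\lambda_1 v}$ accounts for the expected number of ancestors bearing the mutation — and evaluate the double integral asymptotically as $\tau, N \to \infty$; (5) perform the change of variables turning the limiting integral into $I(i)$ and collect all constants.

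The main obstacle I anticipate is step (4): correctly bookkeeping, in a supercritical linear birth–death clone, the expected number of mutations carried by \emph{exactly} $i$ cells at the observation time, as opposed to at least $i$ — this requires the precise law of the number of time-$\tau$ descendants of a single cell (a geometric-type distribution with an atom at $0$ of size the extinction probability $d_1/b_1$ adjusted by time), and then interchanging the expectation over the mutation's birth position (an integral of $e^{\lambda_1 v}$ coming from the expected number of cells alive at the mutation time) with the descendant-count distribution. A secondary but real difficulty is justifying that extending the founding-time integral $\int_0^{t_N}$ to $\int_0^\infty$ and replacing finite-$\tau$ birth–death quantities by their $\tau \to \infty$ limits introduces only lower-order errors uniformly in $i$ fixed; this is where the subcriticality of the sensitive population ($e^{-\lambda_0 s}$ decay) is essential, as it makes the founding-time integral converge and localizes the relevant clones to ages comparable to $t_N$, on which scale the supercritical clone statistics have stabilized.
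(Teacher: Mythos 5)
Your proposal is correct in substance and reaches the right constant, but the route to the dominant term differs from the paper's. Both arguments share the decomposition $S_i^N=\Sb_i^N+\Su_i^N$, the deferral of the negligibility of $\E[\Su_i^N(t_N)]$ (the paper's Lemma~\ref{lem:Sui}, which bounds it by $O(N^{1-\alpha})$), and the single-clone input $\E[S_i^{(1)}(u)]=\omega e^{\lambda_1 u}h_i(e^{\lambda_1 u})$ (the paper's Lemma~\ref{lemS1}, which your step (4) essentially re-derives). Where you diverge is in how the founding times of the resistant clones are handled: the paper structures $\E[\Sb_i^N]$ tree by tree, conditions on the number $k$ of \textit{ancestral resistant} cells per initial sensitive cell, uses the marked Galton--Watson analysis of Section~\ref{sec:AR} (Proposition~\ref{th:ancestralresistant} for the exponential law of $T_N$ given exactly one rescue, Lemma~\ref{lem:probaanc} for $\P(\A_1^N)\sim 2b_0\gamma_N/\lambda_0$), and must then control a remainder $R(N,i)$ coming from trees with $k\geq 2$ rescues via $\sum_{k\geq2}k\P(\A_k^N)=O(\gamma_N^2)$; you instead apply a Campbell/first-moment identity to the point process of ancestral-resistant births, whose conditional intensity is $2\gamma_N b_0 Z_0^N(s)\,ds$, so that $\E[\Sb_i^N(t_N)]=2\gamma_N b_0\int_0^{t_N}\E[Z_0^N(s)]\,\E[S_i^{(1)}(t_N-s)]\,ds$ exactly, and conclude by dominated convergence using $\E[Z_0^N(s)]=Ne^{-(\lambda_0+2\gamma_N b_0)s}$. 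Your computation is exact and avoids both the tree analysis and the $R(N,i)$ error term, which is a genuine simplification for this expectation; what the paper's heavier machinery buys is reuse elsewhere, since the law of $G_N$ and $T_N$ is indispensable for the hitch-hiking term $\Su$ and for Lemma~\ref{lem:Sbxeq}, and is of independent interest. Two small points to fix when fleshing out: the intensity of ancestral-resistant births is $2\gamma_N(1-\gamma_N)b_0Z_0^N+2\gamma_N^2 b_0 Z_0^N=2\gamma_N b_0 Z_0^N$ (the double-rescue event contributes two cells, not the $\gamma_N^2 b_0 Z_0^N$ you wrote, though your leading-order $2\gamma b_0 N^{1-\alpha}e^{-\lambda_0 s}$ is unaffected); and your heuristic for why $\E[\Su_i^N]$ is negligible is backwards — hitch-hiking mutations are carried by essentially the whole clone, and the correct reason for fixed $i$ (as in Lemma~\ref{lem:Sui}) is that there are only $O(N^{1-\alpha})$ such mutations in total, since each \textit{ancestral resistant} cell has only $O(1)$ sensitive divisions in its ancestry, which is harmless here since you defer that proof. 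Note also that the argument requires $t>0$, as the equivalent fails at $t=0$.
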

Notice how the dynamics of the sensitive cells affects the asymptotically-equivalent expression of the expected SFS. In \cite{Gunnarssonetal2021} and \cite{Richard}, such an equivalent was obtained for a birth and death process starting with one resistant cell (see Lemma~\ref{lemS1}). Moreover, Lemma~\ref{lem:nbrresistant} states the number of resistant events that occur during the process of extinction of the sensitive population. By noticing that the resistance events all appear in a negligible time relatively to the time of interest $t_N$, one may think that the final expectation of the SFS would correspond to the multiplication of the two asymptotically-equivalent expressions given by the two lemmas previously cited, i.e.
$$
I(i)\frac{2b_0\gamma\omega}{\lambda_0}N^{\lambda_1t+1-\alpha}.
$$
This is not the case. Indeed, notice that the growth rate of the resistant population modifies the constant parameter. Finally, we can conclude that the \textit{rescue} dynamics has a significant effect on the SFS, although the order size of the approximation ($N^{\lambda_1 t+1-\alpha}$) and its shape with respect to $i$ is not directly impacted.

As previously indicated, the proof of this theorem~\ref{mainth} will be done by studying two distinct quantities $\E[\Sb_i^N(t_N)]$ and $\E[\Su_i^N(t_N)]$ (cf Lemma~\ref{lem:Sbieq} and Lemma~\ref{lem:Sui} respectively). In this case where $i$ is fixed, notice that nearly all of the contribution is accounted for by the first quantity $\E[\Sb_i^N(t_N)]$, i.e. by the neutral mutations occurring in a resistant cell. Indeed, Lemma~\ref{lem:Sbieq} and~\ref{lem:Sui} imply that the second quantity is negligible with respect to the first one.\\

However, the approximation given by Theorem~\ref{mainth} is not appropriate to study the "large families", i.e. mutations that affect a large number of resistant cells.  Indeed, as $t_N$ increases with $N$, the order size of the total population at $t_N$ is $e^{\lambda_1 t_N}$ conditioned to the \textit{rescue}. We should thus study the SFS for some $i$ depending on $N$ as $i\sim N^{\lambda_1 t}$. Hence, we chose to study $S_{x_1,x_2}(t)$, with $x_1,x_2\in (0,\infty]$ and $x_1<x_2$, and which is the number of mutations carried by a number of resistant cells between $x_1\, e^{\lambda_1 t}$ and  $x_2 \, e^{\lambda_1 t}$ at time $t$. Let us now state the result associated to this quantity.

\begin{Th}\label{mainth2}For all $t\geq 0$, $x_1,x_2\in (0,\infty]$ with $x_1<x_2$, we denote 
\begin{equation}\label{def_Sx1x2}
S_{x_1,x_2}(t)=\sum_{i\in (x_1e^{\lambda_1 t_N},x_2e^{\lambda_1 t_N})} S_i(t)
\end{equation}
Then for all $x_1,x_2\in (0,\infty]$ with $x_1<x_2$, $t\geq 0$,
\begin{equation}
       \label{eq:expSxapp}
         \E\left[S_{x_1,x_2}^N(t_N)\right]\underset{N\to\infty}{\sim} \displaystyle b_0 \gamma \omega \lambda_1 \Big(J(x_1)- J(x_2)\Big) N^{1 -\alpha}. 
       \end{equation}
       where for all $x>0$, $J(x)=K(x) + L(x)$ with 
 \begin{align}
     K(x)&:= \frac{2}{\lambda_0+\lambda_1}\int_0^\infty (e^{-(\lambda_1+\lambda_0)s}-1)\, e^{\lambda_1 \,s}e^{-x\frac{\lambda_1}{b_1}e^{\lambda_1s}}ds,\label{def:K}\\
      L(x)&:= \frac{1}{b_1} \int_0^{+\infty}(1+2b_0s) \,e^{-\lambda_0 s}e^{-x\frac{\lambda_1}{b_1}e^{\lambda_1 s}} ds.\label{def:L}
 \end{align}
\end{Th}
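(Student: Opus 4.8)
The plan is to prove Theorem~\ref{mainth2} through the same splitting $S_{x_1,x_2}^N = \Sb_{x_1,x_2}^N + \Su_{x_1,x_2}^N$ used for Theorem~\ref{mainth} (as in \cite{Dinhetal2020,TungandDurrett2021}): $\Sb_{x_1,x_2}^N$ counts the mutations that first appeared in a resistant cell and fall in the window, $\Su_{x_1,x_2}^N$ those inherited from the sensitive lineage. Unlike in the fixed-$i$ regime, here the two pieces are of the same order $N^{1-\alpha}$ and both must be evaluated: a mutation carried by $\Theta(e^{\lambda_1 t_N})$ resistant cells necessarily sits close to the founding cell of the ancestral resistant family it belongs to, so it is ``seen'' by the Kesten--Stigum limit of that family no matter whether it was born in a resistant or a sensitive cell. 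I would show that $\E[\Su_{x_1,x_2}^N(t_N)]$ accounts for the $2b_0 s$-weighted part of $L$, and $\E[\Sb_{x_1,x_2}^N(t_N)]$ for the term $K$ together with the $1$-weighted part of $L$ (mutations created exactly at a resistance event); this shuffling of terms between the two contributions is the only genuinely new feature relative to the fixed-$i$ proof.

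The shared ingredient is the following description, coming from Section~\ref{sec:AR} and the branching property. Conditionally on the sensitive population, the ancestral resistant cells arise by thinning the division events of the sensitive genealogical tree with probability $\sim\gamma_N$, and Section~\ref{sec:AR} furnishes the joint law of the birth time $\sigma$ and the number $M$ of inherited neutral mutations of such a cell; given that, each ancestral resistant cell starts an independent birth--death process of rates $(b_1,d_1)$. Writing $W$ for the a.s.\ limit of $e^{-\lambda_1 u}Z_1^{(1)}(u)$, where $Z_1^{(1)}$ is started from one resistant cell, one has $W=0$ with probability $d_1/b_1$ and $W\sim\mathrm{Exp}(\lambda_1/b_1)$ otherwise; hence a resistant cell alive at time $u$ inside a family founded at a negligible time has a number of descendants at $t_N$ belonging to $(x_1 e^{\lambda_1 t_N},x_2 e^{\lambda_1 t_N})$ with probability converging to $\psi(u):=\tfrac{\lambda_1}{b_1}\big(e^{-\frac{\lambda_1}{b_1}x_1 e^{\lambda_1 u}}-e^{-\frac{\lambda_1}{b_1}x_2 e^{\lambda_1 u}}\big)$. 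Since $\psi(u)$ decays doubly-exponentially in $u$, only mutations born at times $u=O(1)$ (far below $t_N$) contribute, which makes the time integrals below converge and allows one to extend their range from $(0,t_N)$ to $(0,\infty)$.

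For $\E[\Sb_{x_1,x_2}^N(t_N)]$ I would express it as the time integral of (expected rate of appearance of resistant-born mutations)$\,\times\,\psi(u)$: such mutations appear at rate $\omega\big(b_1\E[Z_1^N(u)]+\gamma_N b_0\E[Z_0^N(u)]\big)$, and the mean-field equations give $\E[Z_0^N(u)]\sim N e^{-\lambda_0 u}$ and $\E[Z_1^N(u)]\sim\tfrac{2\gamma_N b_0 N}{\lambda_0+\lambda_1}(e^{\lambda_1 u}-e^{-\lambda_0 u})$; substituting these together with $\psi$ and changing variables produces exactly the integral defining $K$, plus the piece of $L$ carrying weight $1$. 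The single-family SFS estimate behind $\psi$ is the content of (a slight generalization of) Lemma~\ref{lemS1}. For $\E[\Su_{x_1,x_2}^N(t_N)]$, the relevant mutation first appeared on a sensitive cell and is carried at $t_N$ by the union of all resistant families hanging below that cell; because a single sensitive cell has only $O(N^{-\alpha})$ ancestral resistant descendants, with probability $1-o(1)$ exactly one such family is present, so the mutation is carried by $\approx W e^{\lambda_1 t_N}$ cells with $W$ the limit of that unique family, founded at some $\sigma=O(1)$. This reduces $\E[\Su_{x_1,x_2}^N(t_N)]$ to $\sum_{c}\E\big[M_c\,\one\{W_c e^{-\lambda_1\sigma_c}\in(x_1,x_2)\}\big]+o(N^{1-\alpha})$, the sum running over ancestral resistant cells $c$; the detailed study of subcritical binary Galton--Watson trees with rarely-marked leaves then supplies the joint law of $(\sigma_c,M_c)$ — in particular $\E\big[\sum_c M_c\,\one\{\sigma_c\in ds\}\big]\sim 2\gamma_N b_0^2\omega N\,s\,e^{-\lambda_0 s}\,ds$ — which combined with $\psi$ yields the $2b_0 s$-weighted part of $L$. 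Lemma~\ref{lem:nbrresistant} fixes the overall order $N^{1-\alpha}$.

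The delicate points, where I expect most of the effort to go, are three. First, turning ``$\approx W e^{\lambda_1(t_N-u)}$'' into a rigorous statement under the integral needs a Kesten--Stigum estimate with explicit, $u$-uniform error bound and an integrable dominating function, so that dominated convergence applies; this is the analytic core and the place where supercriticality of the resistant process is genuinely used. Second, one must check that the discarded contributions are $o(N^{1-\alpha})$: mutations shared by two or more ancestral resistant families, the doubly-resistant $\gamma_N^2$-transitions, and the tails of the $u$- and $\sigma$-integrals beyond a slowly growing cut-off. Third — the novel technical ingredient — the joint law and correlation structure of the birth times and inherited mutation numbers of the ancestral resistant cells belonging to one initial sensitive clone must be pinned down through the marked Galton--Watson tree analysis, since knowing these quantities in expectation, one cell at a time, is not sufficient. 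Once these are in hand, computing the limiting integrals $K$ and $L$ is routine.
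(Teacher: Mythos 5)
Your proposal is correct in substance and shares the paper's backbone: the split $S=\Sb+\Su$, the reduction to \textit{ancestral resistant} cells whose appearance time and generation law come from the marked subcritical Galton--Watson analysis of Section~\ref{sec:AR}, the single-family window probability $\psi(u)$, and the disposal of clones containing two or more \textit{ancestral resistant} cells as an $O(N^{1-2\alpha})$ error. The organization of the two halves, however, differs from the paper's. For the resistant-born part you integrate (rate of creation of resistant-born mutations)$\times\psi(u)$ using the first-moment formulas for $\E[Z_0^N(u)]$ and $\E[Z_1^N(u)]$ and the branching property; the paper instead works clone by clone, conditioning on exactly one \textit{ancestral resistant} cell, integrating the single-cell SFS of Lemma~\ref{lemS1} against the exponential law of $T_N$ from Proposition~\ref{th:ancestralresistant}, and then summing over the window (Lemma~\ref{lem:Sbxeq}). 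Your route is arguably more direct for this half, since the rate representation is exact by Fubini and branching and requires no conditioning on the number of \textit{ancestral resistant} cells per clone; the Galton--Watson input is then only needed for the sensitive-born half, where your computation is essentially the paper's evaluation of $Q(N,i)$ in \eqref{eq:QNi}: your intensity $2\gamma_N b_0^2\omega N s e^{-\lambda_0 s}\,ds$ is exactly the limit of the paper's factor $(1+s\delta_0(1-x_N))$ with the ``$1$'' removed, i.e.\ the size-biased $2b_0 s$ term the paper also singles out, and your handling of multiply-hit clones matches the paper's bound on $\tilde R(N,i)$ via Lemma~\ref{lem:probaanc}. Two remarks. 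First, because you attribute the mutation packet created at the resistance event to $\Sb$ rather than $\Su$, your intermediate asymptotics differ from Lemmas~\ref{lem:Sbxeq} and~\ref{lem:expSuiNapp} (the ``$1$''-weighted piece of $L$ migrates from $\Su$ to $\Sb$), but the sum, hence the theorem, is unchanged, and your accounting is internally consistent. Second, the Kesten--Stigum uniformity you single out as the analytic core can be bypassed entirely, as the paper does, by using the exact law \eqref{eq:probaZi} of the single-cell birth--death process and dominated convergence with the doubly-exponential domination you already identified, so that step is easier than you anticipate.
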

As previously, this theorem will be proved by splitting the SFS into two contributions: $\E[\Sb_{x_1,x_2}^N(t_N)]$ describing the number of mutations that appeared during resistant divisions (cf Lemma~\ref{lem:Sbxeq}) and $\E[\Su_{x_1,x_2}^N(t_N)]$  describing the number of mutations that appeared during sensitive divisions (cf Lemma~\ref{lem:expSuiNapp}).  Contrary to Theorem~\ref{mainth}, $\E[\Sb_{x_1,x_2}^N(t_N)]$ and $\E[\Su_{x_1,x_2}^N(t_N)]$ both contribute to the asymptotically-equivalent expression of $\E[S_{x_1,x_2}^N(t_N)]$. The difference between their contributions lies in the shape of the functions $K$ and $L$, respectively associated to $\E[\Sb_{x_1,x_2}^N(t_N)]$ and $\E[\Su_{x_1,x_2}^N(t_N)]$. We deduce from Theorem~\ref{mainth2} that the rescue dynamics impact the asymptotic expected of the SFS of large families  thought three different ways:
\begin{itemize}
    \item The mean number of \textit{ancestral resistant} cells given in Lemma~\ref{lem:nbrresistant};
    \item The lost of time due to the growth rate of the sensitive cells given by $\lambda_0$;
    \item The number of times the sensitive cells divide before they become resistant. Such influence is described by the parenthesis $(1+2b_0s)$ in the expression of $L$. Surprisingly, we find a factor $2$, already met in the dynamics of branching processes (see remark of the main theorem in \cite{Bansaye}) which translates the increase of the probabilities to see a resistant cell appearing in a lineage having many divisions.
\end{itemize}

Let us now deal with the proof of the results.

\section{Description of \textit{ancestral resistant} cells}
\label{sec:AR}

Let us remind that we call \textit{ancestral resistant} cell a resistant cell whose mother is a sensitive cell.
In this section, we establish results regarding the law of occurrence of \textit{ancestral resistant} cells and the probabilities that the progeny of an initial sensitive cell carries one or more \textit{ancestral resistant} cells.\\

In our first result,  Proposition \ref{th:ancestralresistant}, we describe the law of the time of appearance of \textit{ancestral resistant} cells and the law of the number of neutral mutations they carry.

To this end, we will study the progeny of one sensitive cell alive at time $0$. We assume that $\alpha>0$, hence the probability for two or more \textit{ancestral resistant} cells to emerge in the  progeny of the same initial sensitive cell is insignificant compared to the probability of emergence of a unique \textit{ancestral resistant} cell in the progeny of an initial sensitive cell (see Lemma \ref{lem:probaanc} for rigorous arguments). So we will study the structure of progeny containing exactly one \textit{ancestral resistant} cell. Proposition \ref{th:ancestralresistant} gives the law of $G_N$ and $T_N$, respectively, the generation and the appearance time of an \textit{ancestral resistant} cell conditioned on belonging to a progeny that carried exactly one \textit{ancestral resistant} cell.

\begin{Prop}\label{th:ancestralresistant}
 For any $N\in \N^*$, let 
	\begin{equation}
	\label{def:pNbetaN}
	    p_N=\frac{(1-\gamma_N)b_0}{b_0+d_0}, \quad \beta_N=\displaystyle \frac{(b_0+d_0)\gamma_N}{d_0+b_0\gamma_N} 
	\quad \text{and} \quad x_N=\sqrt{1-4p_N(1-p_N)(1-\beta_N)}.
	\end{equation}
	
Then
	\begin{itemize}
		\item[(i)] the law of $G_N$ is characterized by, for all $g \in \mathbb{N^*}$, \begin{equation}
		\label{eqG}
		\mathbb{P}(G_N=g)= x_N (1-x_N)^{g-1}
		\end{equation}
	
		\item[(ii)] and the density of $T_N$, $f_{T_N}$, is written,  for all  $t\geq0$, 
		\begin{equation}\label{eqfT}
		f_{T_N}(t) = \delta_0 x_N e^{-t\delta_0 x_N},
		\end{equation}
  with \begin{equation}\delta_0:=b_0+d_0\label{def:delta0}.\end{equation}
	\end{itemize}

\end{Prop}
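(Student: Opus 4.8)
The plan is to track a single sensitive cell and its progeny via the embedded discrete-time Galton-Watson tree obtained by looking only at division/death events, then extract the law of the first (and, by the rare-mutation heuristic, essentially only) ancestral resistant cell by a generating-function argument. First I would set up the discrete skeleton: at each birth–death event a sensitive cell either divides (probability $b_0/(b_0+d_0)$) or dies (probability $d_0/(b_0+d_0)$), and conditionally on dividing, each of the two offspring independently becomes resistant with probability $\gamma_N$. The parameter $p_N=(1-\gamma_N)b_0/(b_0+d_0)$ is exactly the probability that a given event is a division in which the offspring under consideration stays sensitive, so the sensitive skeleton is a subcritical binary Galton–Watson tree in which each node produces $0$, $1$, or $2$ sensitive children. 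A node that divides but produces at least one resistant offspring is the event "an ancestral resistant cell is born here"; $\beta_N$ as defined is precisely the conditional probability, given that a sensitive lineage event produces offspring, that this offspring is resistant, i.e. it re-weights the tree so that we can speak of a single marked lineage.

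The key step is to compute, for the first ancestral resistant cell in the progeny of one sensitive root, the generating function of its generation $G_N$. I would condition on the fate of the root: with probability $d_0/(b_0+d_0)$ it dies with no progeny (no ancestral resistant cell), and with probability $b_0/(b_0+d_0)$ it divides into two daughters, each of which is resistant w.p. $\gamma_N$ or sensitive w.p. $1-\gamma_N$. Restricting to trees carrying exactly one ancestral resistant cell (using Lemma~\ref{lem:probaanc} to discard the $O(\gamma_N^2)$ events with two or more) and writing $\varphi(s)=\E[s^{G_N}\mid \text{one ancestral resistant cell}]$, a first-step decomposition yields a functional equation of the form $\varphi(s) = s\,(\text{prob. the marked cell is a direct child}) + s\,(\text{prob. it lies in one sensitive daughter's subtree})\,\varphi(s)$, which is affine in $\varphi(s)$ and hence solved explicitly. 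The normalising constant is the total probability of carrying exactly one ancestral resistant cell, and it is here that the square root $x_N=\sqrt{1-4p_N(1-p_N)(1-\beta_N)}$ appears, as the relevant root of the quadratic fixed-point equation for the extinction-type generating function of the sensitive subtree weighted by "no mark". Identifying the resulting rational function of $s$ with the generating function $\sum_g x_N(1-x_N)^{g-1}s^g = x_N s/(1-(1-x_N)s)$ gives~(i); this is the geometric law with success parameter $x_N$.

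For part~(ii), the appearance time $T_N$ is the sum of $G_N$ independent exponential holding times. In the sensitive skeleton each individual waits an $\mathrm{Exp}(b_0+d_0)=\mathrm{Exp}(\delta_0)$ time before its event, and along the marked ancestral lineage the cell at each of the $G_N-1$ intermediate generations divides; since $G_N$ is geometric with parameter $x_N$ and independent exponential $\mathrm{Exp}(\delta_0)$ increments, a standard computation (the geometric sum of i.i.d.\ exponentials is again exponential) gives that $T_N$ is exponentially distributed with rate $\delta_0 x_N$, which is exactly~(\ref{eqfT}). I would make the independence of the successive holding times from $G_N$ precise by noting that the skeleton's branching structure and its edge lengths are independent, then invoke the thinning/geometric-compounding identity $\sum_{k=1}^{G}E_k \sim \mathrm{Exp}(\delta_0 x_N)$.

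The main obstacle I anticipate is bookkeeping the conditioning "exactly one ancestral resistant cell" cleanly: one must verify that restricting the GW tree to this event does not distort the law of $G_N$ beyond the explicit normalisation, and that the contributions of trees with $\ge 2$ marks are genuinely negligible at the level of the \emph{conditional} law rather than just in total probability — this is where Lemma~\ref{lem:probaanc} must be invoked carefully. The algebra identifying the solved functional equation with the geometric generating function, and checking that $x_N\in(0,1)$ so that~(\ref{eqG}) is a genuine probability law, is routine but needs the explicit forms of $p_N$ and $\beta_N$ in~(\ref{def:pNbetaN}).
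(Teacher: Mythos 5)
Your proposal is correct in substance but takes a genuinely different route from the paper. You work directly with the unnormalised generating function $B(s)=\E\big[s^{G_N}\one_{\{\text{exactly one ancestral resistant cell}\}}\big]$ for the progeny of one sensitive cell: a first-step decomposition at the root gives the affine relation $B(s)=\tfrac{2b_0}{\delta_0}\gamma_N(1-\gamma_N)A\,s+\tfrac{2b_0}{\delta_0}(1-\gamma_N)^2A\,s\,B(s)$, where $A$, the probability that the progeny of a sensitive cell contains no \textit{ancestral resistant} cell, solves the quadratic $A=\tfrac{d_0}{\delta_0}+\tfrac{b_0}{\delta_0}(1-\gamma_N)^2A^2$. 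Taking the root in $[0,1]$ (the minimal one, since the subcritical tree is a.s.\ finite) makes the discriminant $\big(1-4(1-\gamma_N)^2b_0d_0/\delta_0^2\big)^{1/2}=x_N$ appear, exactly as in \eqref{approx:xn}; one gets $\tfrac{2b_0}{\delta_0}(1-\gamma_N)^2A=1-x_N$, hence $B(s)/B(1)=x_Ns/(1-(1-x_N)s)$, which is \eqref{eqG}, and as a by-product $B(1)$ recovers $\P(\A_1^N)$ of Lemma~\ref{lem:probaanc}. The paper instead modifies the tree so that resistance is decided at the end of a cell's life, reduces to the general Lemma~\ref{lemma:GWgen} on marked subcritical Galton--Watson trees, and proves that lemma through explicit computations on the leaf-count distribution (the sequences $u_n$, $v_{g,n}$, Catalan-type recursions and Cauchy products). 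Your route is shorter and more elementary for the specific law \eqref{eqG}; the paper's heavier combinatorial machinery pays off because it is reused later (in the proofs of Lemma~\ref{lem:probaanc}, Lemma~\ref{lem:Sui} and Lemma~\ref{lem:TN_annexe}), which a pure generating-function shortcut would not provide. Part (ii) is handled identically in both arguments: the conditioning involves only the discrete skeleton, so the lifetimes along the ancestral lineage remain i.i.d.\ $\mathrm{Exp}(\delta_0)$ independent of $G_N$, and a geometric sum of such variables is $\mathrm{Exp}(\delta_0x_N)$, giving \eqref{eqfT}.

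Two points to adjust. First, the proposition is an exact statement for every $N$ and every $\gamma_N\in(0,1)$, not an asymptotic one: the conditioning on exactly one \textit{ancestral resistant} cell already excludes multi-mark trees, so no appeal to the rare-mutation regime or to Lemma~\ref{lem:probaanc} is needed to ``discard $O(\gamma_N^2)$ events'' --- and relying on that lemma would be delicate anyway, since in the paper it is proved with the same machinery as the proposition. Your computation, carried out on the restricted event, is exact, so simply drop that heuristic. Second, the functional equation must be written for the restricted quantity $B(s)$ rather than directly for the conditional $\varphi(s)$, with the normalisation by $B(1)$ performed at the end, as you indeed indicate when mentioning the normalising constant.
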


Using the proof of this proposition, we are able to state the law of one \textit{ancestral resistant} cell chosen uniformly at random and belonging to a progeny that carried at least one \textit{ancestral resistant} cell (and not exactly one as for Proposition~\ref{th:ancestralresistant}). Such a result is given by Lemma~\ref{lem:TN_annexe}. Moreover, we are also able to state the following lemma, which deals with the probability that a progeny carries a given number of \textit{ancestral resistant} cells and which will be useful to prove our main results.
\begin{Lem}\label{lem:probaanc}
Let denote by $\A_k^N$ the event that there is exactly $k$ \textit{ancestral resistant} cells in the progeny of one (initial) sensitive cell, then
\begin{equation}
    \label{eq:1anc}
    \P\left(\A_1^N\right)=\frac{1-x_N}{x_N(1-\gamma_N)}\gamma_N=\frac{2b_0}{\lambda_0}\gamma_N+\underset{N\to\infty}{o}(\gamma_N),
\end{equation}
where $x_N$ is defined in~\eqref{def:pNbetaN}.

Moreover, the expected number of \textbf{multiple} \textit{ancestral resistant} cells in the progeny of a (initial) sensitive cell is given by
\begin{equation}
    \label{eq:k2anc}
    \sum_{k\geq 2}k\P\left(\A_k^N\right)
    =\frac{2b_0\delta_0^2}{\lambda_0^3}\gamma_N^2+\underset{N\to\infty}{o}(\gamma_N^2). 
\end{equation}
\end{Lem}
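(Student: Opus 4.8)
The plan is to exploit the Galton-Watson structure underlying the progeny of a single sensitive cell, discretised at division/death events. Consider the embedded discrete tree: starting from one sensitive cell, at each step the cell either divides (probability $b_0/(b_0+d_0)$) into two cells, each of which independently is ``flagged resistant'' with probability $\gamma_N$, or dies (probability $d_0/(b_0+d_0)$). An \textit{ancestral resistant} cell corresponds precisely to a node that is a daughter of a dividing sensitive node and that receives the resistant flag. The quantities $p_N$ and $\beta_N$ of~\eqref{def:pNbetaN} are exactly the parameters that appear when one tracks the sub-tree of sensitive cells that have not yet produced any flagged daughter: $p_N$ is (up to normalisation) the probability a sensitive division produces two non-flagged sensitive daughters, and $\beta_N$ governs the flagging. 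I expect that the generating-function machinery already developed for Proposition~\ref{th:ancestralresistant} — in particular the quadratic fixed-point equation whose relevant root is $(1-x_N)/(2p_N(1-\beta_N))$ or a similar expression, with $x_N=\sqrt{1-4p_N(1-p_N)(1-\beta_N)}$ — directly yields the extinction/no-flag probabilities needed here.

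For~\eqref{eq:1anc}: let $q_0$ be the probability that the progeny of one sensitive cell contains \emph{no} ancestral resistant cell (equivalently, the sensitive sub-tree dies out before any daughter is ever flagged). A one-step decomposition gives a quadratic equation for $q_0$ whose solution is expressible through $x_N$; one finds $q_0 = 1 - \frac{1-x_N}{x_N(1-\gamma_N)}\gamma_N$ after simplification. Then $\P(\A_1^N)$ is obtained by a first-flag decomposition: condition on the first flagged daughter (which happens at some division, with the co-daughter and both sub-progenies required to produce no further flag), summing a geometric-type series in the ``no-flag'' sensitive generations — this is exactly the computation already carried out to derive~\eqref{eqG}, since $\P(G_N = g) = x_N(1-x_N)^{g-1}$ encodes the law of the generation of that unique flagged cell conditionally on $\A_1^N$. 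Concretely, $\P(\A_1^N) = \big(\text{prob. exactly one flag ever appears and its sub-progeny carries none}\big)$, and matching with the normalisation constant of~\eqref{eqG} gives the closed form $\frac{1-x_N}{x_N(1-\gamma_N)}\gamma_N$. The asymptotics $\tfrac{2b_0}{\lambda_0}\gamma_N + o(\gamma_N)$ then follow from a Taylor expansion: $p_N = \tfrac{b_0}{\delta_0} + O(\gamma_N)$, $1-\beta_N = \tfrac{d_0}{\delta_0} + O(\gamma_N)$, so $4p_N(1-p_N)(1-\beta_N) = 4\tfrac{b_0 d_0^2}{\delta_0^3} + O(\gamma_N)$ and $1-x_N = 1-\sqrt{1-4p_N(1-p_N)(1-\beta_N)}$; expanding the square root and dividing by $x_N(1-\gamma_N)\to$ (its limit) produces the factor $2b_0/\lambda_0$. (Here one uses $\delta_0^2 - 4b_0 d_0 \cdot$ [the appropriate combination] $= \lambda_0^2$-type identities, i.e. $(\,b_0 - d_0\,)^2$ structure, to get the limit of $x_N$ cleanly.)

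For~\eqref{eq:k2anc}: I would compute $\sum_{k\ge 2} k\,\P(\A_k^N) = \E[\,\#\text{ ancestral resistant cells}\,] - \P(\A_1^N)$, using that on $\A_0^N$ the count is $0$ and on $\A_1^N$ it is $1$. The total expected number of ancestral resistant cells is clean: it equals $\gamma_N$ times the expected number of sensitive divisions in the progeny, counted with multiplicity $2$ for the two daughter slots — and the expected number of division events in a sub-critical binary branching process started from one cell with division probability $b_0/\delta_0$ is $\tfrac{b_0/\delta_0}{1 - 2b_0/\delta_0}\cdot$ (with a correction because flagging slightly modifies the sensitive process). A cleaner route: by linearity, $\E[\,\#\text{anc. res.}\,] = 2\gamma_N \,\E[\#\{\text{sensitive divisions}\}]$, and $\E[\#\{\text{sensitive divisions}\}]$ is finite because the induced sensitive process is strictly sub-critical; expanding this expectation and $\P(\A_1^N)$ each to order $\gamma_N^2$ and subtracting gives $\tfrac{2b_0\delta_0^2}{\lambda_0^3}\gamma_N^2 + o(\gamma_N^2)$. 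The main obstacle I anticipate is bookkeeping the $O(\gamma_N^2)$ terms consistently: both $\E[\,\#\text{anc. res.}\,]$ and $\P(\A_1^N)$ are $\Theta(\gamma_N)$ with matching leading coefficients $2b_0/\lambda_0$, so the $\gamma_N$ terms must cancel exactly and the answer lives entirely in the second-order expansion — this forces a careful expansion of $x_N$ (hence of the square root) to second order in $\gamma_N$, tracking how $\gamma_N$ enters through $p_N$, $\beta_N$ \emph{and} the explicit $(1-\gamma_N)$ factor simultaneously. I would organise this as a single Taylor-expansion lemma for $x_N$ of the form $x_N = \lambda_0/\delta_0 + c_1 \gamma_N + c_2\gamma_N^2 + o(\gamma_N^2)$ with explicitly computed $c_1, c_2$, and then substitute mechanically.
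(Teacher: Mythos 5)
Your overall strategy is close to the paper's and is viable: for \eqref{eq:1anc} you reuse the Galton--Watson machinery behind Proposition~\ref{th:ancestralresistant}, which is exactly what the paper does; for \eqref{eq:k2anc} your identity $\sum_{k\geq 2}k\P(\A_k^N)=\E[\#\text{anc.\ res.}]-\P(\A_1^N)$, with $\E[\#\text{anc.\ res.}]=\frac{2b_0\gamma_N}{\lambda_0+2\gamma_N b_0}$ computed exactly as in Lemma~\ref{lem:nbrresistant} (no ``correction'' is needed: the division probability of a sensitive cell does not depend on the flags of its daughters), is a legitimate variant of the paper's computation, which performs the same subtraction at the level of the leaf count via $\sum_{k=2}^n k\binom{n}{k}\beta_N^k(1-\beta_N)^{n-k}=n\beta_N-n\beta_N(1-\beta_N)^{n-1}$ and \eqref{eq:sumbetaun}. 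I checked that your route does reproduce the constant $\frac{2b_0\delta_0^2}{\lambda_0^3}$. Note also that, contrary to what you anticipate, no second-order expansion of $x_N$ is needed: both terms carry an explicit factor $\gamma_N$, $\E[\#\text{anc.\ res.}]$ is known exactly, and the first-order expansion \eqref{approx:xn} of $x_N$ suffices.

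There are, however, two genuine gaps. First, the exact formula $\P(\A_1^N)=\frac{(1-x_N)\gamma_N}{x_N(1-\gamma_N)}$ — the pivot of both asymptotics — is asserted rather than derived. Equation \eqref{eqG} is a conditional law that already sums to one, so it has no leftover ``normalisation constant'' encoding $\P(\A_1^N)$; and your interim identity $q_0=1-\frac{(1-x_N)\gamma_N}{x_N(1-\gamma_N)}$ is false as an exact statement, since it ignores the events $\A_k^N$ with $k\geq 2$. What is actually needed is the root decomposition $\P(\A_1^N)=\frac{2p_N\,\P(\RR^{\T_N})\,\P(\RR_0^{\T_N})}{1-\gamma_N}$ together with the explicit values $\P(\RR_0^{\T_N})=F(1-\beta_N)=\frac{1-x_N}{2p_N}$ and $\P(\RR^{\T_N})=\beta_N\frac{1-p_N}{x_N}$, obtained from the generating function $F(y)=\tilde p(y)/p_N$ and its derivative in the proof of Lemma~\ref{lemma:GWgen}; this is the step you gesture at but do not carry out. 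Second, your Taylor expansion contains an error that propagates to the constants: $1-\beta_N=\frac{d_0(1-\gamma_N)}{d_0+\gamma_N b_0}\to 1$, not $\frac{d_0}{\delta_0}+O(\gamma_N)$, so that $4p_N(1-p_N)(1-\beta_N)=\frac{4(1-\gamma_N)^2 b_0 d_0}{\delta_0^2}$ and $x_N=\frac{\lambda_0}{\delta_0}+\frac{4b_0d_0}{\lambda_0\delta_0}\gamma_N+o(\gamma_N)$; with your value $\frac{4b_0d_0^2}{\delta_0^3}$ the limit of $x_N$ is not $\lambda_0/\delta_0$ and the leading constants in both \eqref{eq:1anc} and \eqref{eq:k2anc} come out wrong. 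Once these two points are fixed, your argument goes through and agrees with the paper's.
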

All the results of this section are true for all $\gamma_N \in (0,1)$. Hence we obtain that for all $\gamma_N \in (0,1)$, $G_N$ follows a geometric law of parameter $x_N \in (0,1)$ given by Formula \eqref{def:x} and $T_N$ follows an exponential law of parameter $x_N\delta_0$.The parameter $\delta_0$ represents the global dynamical rate of one sensitive cell. The parameter $x_N$ can be seen as the probability for a sensitive cell to become resistant knowing $\A_1^N$, i.e. knowing there is only one \textit{ancestral resistant} cell in the progeny of its initial ancestor. Hence to have one resistant ancestral cell at generation $g$, $g-1$ divisions need to be done without events of resistance and an event of resistance has to succeed at the $g^{\text{th}}$ attempt. We illustrate the results of Proposition \ref{th:ancestralresistant} with numerical simulations in Figure \ref{fig:Law}. Orange histograms have been obtained with a sample of $100 \,000$ realizations of $G_N$ in cases (a) and (b), and of $100 \,000$ realizations of $T_N$ for cases (c) and (d). Two different values of $\gamma_N$ were used $\gamma_N=0.2$ for (a) and (c) and $\gamma_N=0.002$ for (b) and (d). We can see that the bigger $\gamma_N$ is, the more realizations of $G_N$ are close to $1$ and of $T_N$ close to $0$. Indeed, the process $Z_0$ is subcritical. Hence the probability to observe old sensitive cells is very rare. When $\gamma_N$ is small, we need to simulate a lot of events to observe $100\,000$ realizations of $G_N$. So we increase the chance of observing rare events as old sensitive cells.\\

\begin{figure}[ht]
  \centering
  \subcaptionbox{\centering Empirical and theoretical law of $G_N$ for $\gamma_N=0.2$.\label{figGN1}}{%
    \includegraphics[width=0.5\textwidth]{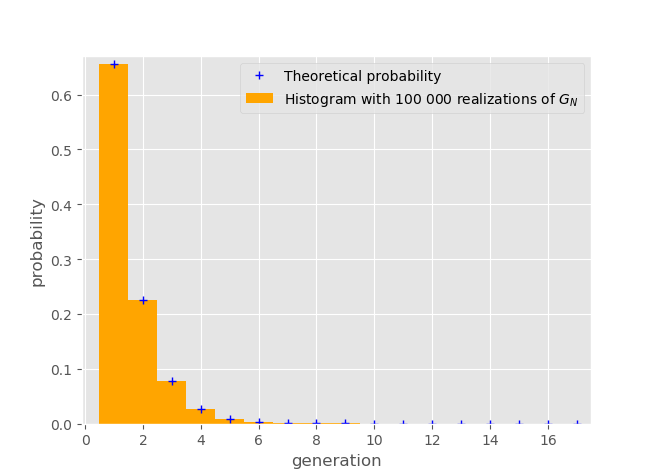}}\subcaptionbox{\centering Empirical and theoretical law of $G_N$ for $\gamma_N=0.002$.\label{figGN2}}{%
    \includegraphics[width=0.5\textwidth]{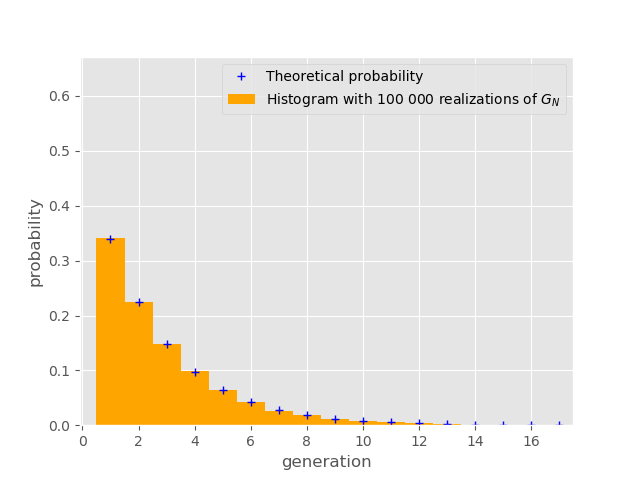}}
  \subcaptionbox{\centering Empirical and theoretical law of $T_N$ for $\gamma_N=0.2$.\label{figTN1}}{%
    \includegraphics[width=0.5\textwidth]{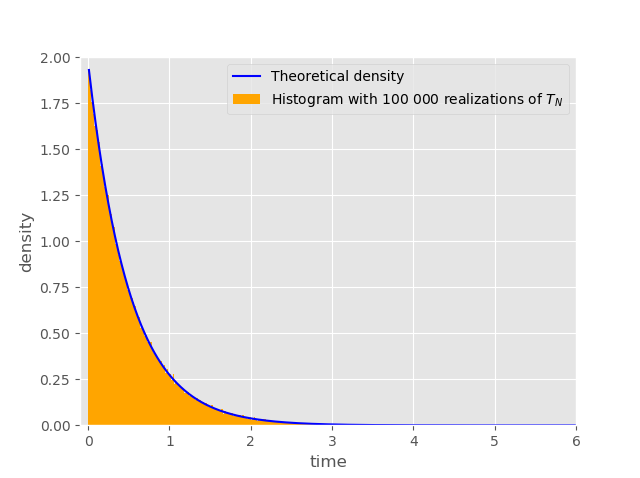}}\subcaptionbox{\centering Empirical and theoretical law of $T_N$ for $\gamma_N=0.002$.\label{figTN2}}{%
    \includegraphics[width=0.5\textwidth]{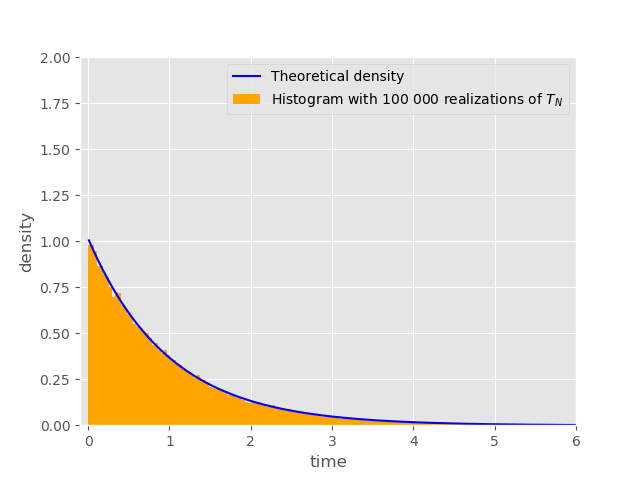}}
  \caption{Theoretical and empirical law of $T_N$ and $G_N$ for $b_0=1.0$, $d_0=2.0$ and two different values of $\gamma_N$ ($\gamma_N=0.2$ at left and $\gamma_N=0.002$ at right). The orange histograms has been obtained using $100 \, 000$ realizations of $G_N$ (for Figures (a) and (b)) and of $T_N$ (for Figures (c) and (d)). The blue cross of Figures (a) and (b) correspond to Formula \eqref{eqG}. The blue lines of Figures (c) and (d) correspond to the function \eqref{eqfT}.}\label{fig:Law}
\end{figure}

The end of this section is devoted to the proofs.

\begin{proof}[Proof of Proposition~\ref{th:ancestralresistant}]
    The main idea to prove this  proposition is to study the topology of the subcritical family trees issued from the initial sensitive cells, disregarding the neutral mutations and the offspring of \textit{ancestral resistant} cells. To this aim, we will slightly modify the trees as follows:
    \begin{itemize}
        \item Firstly, as we are first interested in the generation of the \textit{ancestral resistant} cells, we consider fixed time of living. In other words, we consider Galton-Watson trees. 
        \item At the end of the living time of a cell (except the one at the tree root), it becomes a resistant cell with probability $\gamma_N$. In other words, we disconnect the event of division and the event of becoming resistant, such that in the new considered tree, cells divide and at the end of their living time, we decide if they were actually resistant or not (with probability $\gamma_N$). 
        \item If a cell become resistant at the end of its living time, this leaves a leaf, as if it was dead. Indeed, we are not interested in the offspring of the resistant cells to prove this theorem. 
    \end{itemize}
    To summarize, the trees under consideration are the following ones. At the end of its living time, a cell can
    \begin{itemize}
        \item[-] become resistant with probability $\gamma_N$ and have no progeny;
        \item[-] die with probability $(1-\gamma_N)\frac{d_0}{b_0+d_0}$;
        \item[-] divide into two with probability $(1-\gamma_N)\frac{b_0}{b_0+d_0}=:p_N$.
    \end{itemize}
    Notice also that we will have to exclude the possibility that the root becomes resistant at the end of its life time, since in our initial process the roots are all sensitive cells.\\
     On Figure \ref{fig:modif}, we give an example of tree with its corresponding modified tree. The probability of each event is given above the branch. We can notice that the probability of these two configurations of tree is actually the same.\\
    \begin{figure}[h]
  \centering
  \subcaptionbox{Initial tree.\label{fig1a}}{%
    \includegraphics[width=0.4\textwidth]{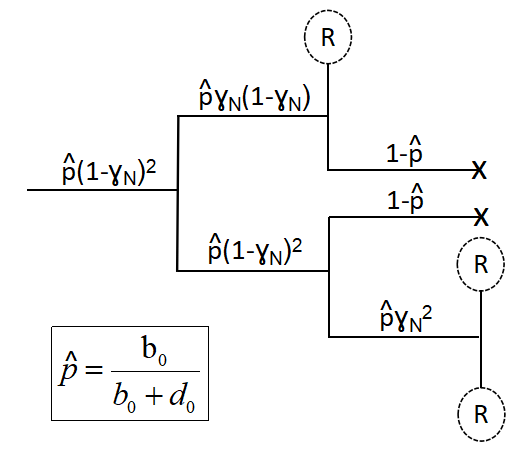}}
  \subcaptionbox{Corresponding modified tree.\label{fig1b}}{%
    \includegraphics[width=0.5\textwidth]{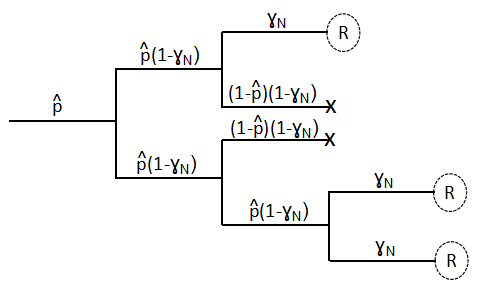}}
 
  \caption{An example of family tree (a) without modification or (b)with the modification. The probability of each event, leading to this tree, is given up to each branch. In the modified case (b), the probability for the root to divide is given by $\hat{p}$ which correspond to division event knowing that the root can't become resistant.}\label{fig:modif}
\end{figure}
    
    To proceed with the proof of the proposition, we need the following general lemma concerning Galton-Watson trees that we state here but prove at the end of this section.
    \begin{Lem}\label{lemma:GWgen}
    Let $\mathcal{T}(p,\beta)$ be a subcritical Galton-Watson tree such that branches divide into two with probability $p< 1/2$ or die with probability $1-p$ and such that, once the (finite) tree is constructed, each leaf is marked independently from one another with probability $\beta$. Let denote by $\mathcal{R}^{\mathcal{T}(p,\beta)}$ the event that $\mathcal{T}(p,\beta)$ has exactly one marked leaf. Finally, let $G^{\mathcal{T}(p,\beta)}$ be the generation of a leaf chosen uniformly at random in a tree of law $\mathcal{T}(p,\beta)$. The generation of a leaf is the smallest number of edges on a path between the leaf and the root. Then, for all $g\in \mathbb{N}^*$,
	\begin{equation}
	\label{eqPhr}
	\P\left(G^{\mathcal{T}(p,\beta)}=g \; \vert \; \mathcal{R}^{\mathcal{T}(p,\beta)}\right)= x(1-x )^{g-1}
	\end{equation} where \begin{equation}\label{def:x}
	x=\sqrt{1-4p(1-p)(1-\beta)}.
	\end{equation}
	\end{Lem}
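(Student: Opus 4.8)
The plan is to compute the probability generating function (or rather, a suitably weighted count) of leaves at each generation in a subcritical binary Galton–Watson tree, then incorporate the marking of leaves, and finally identify the conditional law by a ratio. First I would introduce, for a tree $\T(p,\beta)$ built without yet marking, the quantity $N_g$ = number of leaves at generation exactly $g$, and observe that $G^{\T(p,\beta)}$ chosen uniformly among all leaves satisfies $\P(G^{\T(p,\beta)}=g,\,\RR^{\T(p,\beta)}) = \E[\,(\text{number of leaves at gen }g)\cdot\one_{\{\text{exactly one marked leaf}\}}\,/\,(\text{total number of leaves})\,]$. The division by the total number of leaves is awkward, so instead I would condition on the tree shape and use that, given a tree with $\ell$ leaves, the probability that exactly one is marked and a uniformly chosen leaf is that marked one equals $\beta(1-\beta)^{\ell-1}$ — but weighted by which leaf is chosen, summing over the $\ell$ leaves and dividing by $\ell$ gives exactly $\beta(1-\beta)^{\ell-1}$ as well (the chosen leaf is the marked one). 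So $\P(G^{\T}=g \text{ and the chosen leaf is the unique mark})$ reduces to summing $\beta(1-\beta)^{\ell-1}$ over tree shapes weighted by the number of generation-$g$ leaves divided by $\ell$; the $1/\ell$ cancels against the sum over which leaf. Concretely: pick a leaf uniformly; it lies at some generation $G$; the event $\RR$ given everything else is that all \emph{other} leaves are unmarked and this one is marked. Hence
\begin{equation*}
\P\big(G^{\T(p,\beta)}=g,\ \RR^{\T(p,\beta)}\big)=\beta\sum_{\text{trees }\tau}\P(\tau)\,\frac{(\text{\# gen-}g\text{ leaves in }\tau)}{(\text{\# leaves in }\tau)}\,(1-\beta)^{(\text{\# leaves in }\tau)-1}.
\end{equation*}

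Next I would set up a recursive/generating-function computation for the right-hand side. Let $\phi(s):=\E\big[s^{(\#\text{ leaves})}\big]$ over the Galton–Watson tree with offspring $2$ w.p. $p$ and $0$ w.p. $1-p$; it satisfies $\phi(s)=(1-p)s+p\,\phi(s)^2$, the standard quadratic, with the relevant root $\phi(s)=\frac{1-\sqrt{1-4p(1-p)s}}{2p}$. To track generation-$g$ leaves I would introduce a second variable: let $\phi_g(s,u)=\E\big[s^{\#\text{leaves}}u^{\#\text{gen-}g\text{ leaves}}\big]$, with the recursion obtained by splitting off the root: a leaf at generation $g$ of the whole tree is a leaf at generation $g-1$ of a subtree rooted at a child. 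This gives $\phi_g(s,u)=(1-p)s\cdot(\text{boundary term for }g)+p\,\phi_{g-1}(s,u)^2$, and at $g=0$ the root-is-a-leaf contributes the $u$-marking. The factor $\frac{\#\text{gen-}g\text{ leaves}}{\#\text{leaves}}(1-\beta)^{\#\text{leaves}-1}$ I would obtain by the identity $\frac{a}{b}c^{b-1}=\int_0^1 \big(\partial_u\big|_{u=1}\, (cs)^{a}\big)\big/(\cdots)$ — more cleanly, use $\frac{a}{b}c^{b-1} = \frac{1}{c}\int$ ... actually the slick route is: $\sum_\tau \P(\tau)\frac{a_\tau}{b_\tau}(1-\beta)^{b_\tau-1}$ where $a_\tau,b_\tau$ are the two counts; writing $\frac{a}{b}x^{b-1}=\frac{\partial}{\partial u}\Big|_{u=1}\frac{x^{b-1}}{?}$ is messy, so instead I would directly \emph{re-derive} the law of $G^{\T}$ from the tree decomposition: condition on the root dividing (probability normalization excludes the root being a leaf when we want $g\ge 1$; note the lemma statement has no such exclusion, but a single-node tree has its only leaf at generation $0$ and $\P(G=0)=0$ in the formula, consistent with $x(1-x)^{-1}\cdot$ nothing — one checks generation $0$ gets probability $0$ because $g\in\N^*$), then the tree is two i.i.d. subtrees, the uniformly chosen leaf falls in one of them with probability proportional to its leaf-count, and generations shift by $1$; the unique-mark event means the chosen subtree has exactly one mark (the chosen leaf) and the other subtree has none.

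This yields a clean fixed-point equation. Writing $q:=\P(\text{a }\T(p,\beta)\text{ tree has no marked leaf})$, which satisfies $q=(1-p)(1-\beta)+p q^2$ (root is an unmarked leaf, or root divides into two mark-free subtrees), one solves $q=\frac{1-\sqrt{1-4p(1-p)(1-\beta)}}{2p}=\frac{1-x}{2p}$ with $x$ as in \eqref{def:x}. Then, letting $h_g:=\P(G^{\T}=g,\ \RR^{\T})$, conditioning on the root (which is not a leaf when $g\ge 1$, contributing factor $p$) and on which of the two subtrees contains the chosen leaf: that subtree contributes a leaf at generation $g-1$ carrying the single mark — probability $h_{g-1}$ in the appropriate ``rooted-subtree'' sense — while the other subtree is mark-free, probability $q$; by symmetry a factor $2\cdot\frac12=1$ for the choice, and the weighting by leaf-count is exactly what makes the ``uniformly chosen leaf'' consistent between parent and subtree. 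One has to be careful that ``uniformly chosen leaf of the whole tree'' restricted to one subtree is \emph{not} uniform on that subtree unless reweighted by leaf-count; but the event $\RR$ already pins the chosen leaf to be the unique mark, and a uniformly chosen mark among all marks, conditioned on there being exactly one, \emph{is} canonical — so I would actually phrase the whole argument in terms of ``the unique marked leaf, on the event $\RR$'' rather than ``a uniformly chosen leaf'', which the lemma's phrasing permits since on $\RR$ the uniformly chosen leaf need not be the marked one. Re-examining: $G^{\T}$ is the generation of a uniformly chosen leaf, \emph{not} of the marked one, so I do need the leaf-count reweighting. The resolution: $h_g=\E\big[\frac{N_g}{L}\one_\RR\big]$ where $N_g,L$ are gen-$g$-leaf-count and total-leaf-count; decomposing $L=L_1+L_2$ and $N_g=N_g^{(1)}+N_g^{(2)}$ over the two subtrees and using $\frac{N_g^{(1)}}{L_1+L_2}=\frac{L_1}{L_1+L_2}\cdot\frac{N_g^{(1)}}{L_1}$ does not factor, which is the genuine difficulty. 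I expect this to be the main obstacle: handling the $1/L$ denominator. I would overcome it by the standard trick of writing $\frac1L=\int_0^1 t^{L-1}\,dt$, turning $\E\big[\frac{N_g}{L}\one_\RR\big]=\int_0^1 \E\big[N_g\, t^{L-1}\one_\RR\big]\,dt$ into a quantity that \emph{does} satisfy a multiplicative recursion in the two subtrees (since $N_g t^{L}$ and $t^{L}$ and $\one_\RR$ all factor as products over subtrees). Then $\psi_g(t):=\E[N_g t^{L}\one_\RR]$ obeys, for $g\ge1$, $\psi_g(t)=2p\,\psi_{g-1}(t)\,\rho(t)$ where $\rho(t)=\E[t^{L}\one_{\text{no mark}}]$ solves $\rho(t)=(1-p)(1-\beta)t+p\rho(t)^2$, and $\psi_0(t)=(1-p)\beta t$ (root is a marked leaf); hence $\psi_g(t)=(1-p)\beta t\,(2p\rho(t))^{g}$, and $h_g=\int_0^1\psi_g(t)t^{-1}\,dt=(1-p)\beta\int_0^1(2p\rho(t))^{g}\,dt$. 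A short computation of $\int_0^1(2p\rho(t))^g\,dt$ via the substitution $t\mapsto \rho$ (using $\rho'(t)$ from the quadratic), together with the normalization $\sum_{g\ge1}h_g=\P(\RR^{\T})$ to fix constants, should collapse everything to the geometric law $x(1-x)^{g-1}$ with $x=\sqrt{1-4p(1-p)(1-\beta)}$. Finally I would double-check the edge case $g=0$ (contributes nothing since $g\in\N^*$ in the statement, and indeed the single-node marked tree has $G=0$, consistently excluded) and that $2p\rho(1)=1-x$ so the ratio $h_g/\P(\RR^{\T})$ is exactly $x(1-x)^{g-1}$, completing the proof.
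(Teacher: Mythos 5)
There is a genuine gap at the heart of your computation: the claimed multiplicative recursion for $\psi_g(t)=\E[N_g t^{L}\one_{\RR}]$ is false. When the root divides, $N_g=N_{g-1}^{(1)}+N_{g-1}^{(2)}$ is a \emph{sum} over the two subtrees, while $\one_{\RR}=\one_{\RR^{(1)}}\one_{\RR_0^{(2)}}+\one_{\RR_0^{(1)}}\one_{\RR^{(2)}}$ (one mark in one subtree, none in the other); expanding the product therefore gives, besides $2p\,\psi_{g-1}(t)\rho(t)$, the cross terms $2p\,\E\big[N_{g-1}t^{L}\one_{\{\text{no mark}\}}\big]\,\E\big[t^{L}\one_{\RR}\big]$, which account for the counted generation-$g$ leaf lying in the subtree that does \emph{not} contain the unique mark. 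These terms are strictly positive, and dropping them changes the law, not just a constant: your formula $h_g=(1-p)\beta\int_0^1(2p\rho(t))^{g}\,dt$ has a ratio $h_{g+1}/h_g$ that depends on $g$ (a Laplace-type estimate near $t=1$ gives $h_g\asymp (1-x)^{g}/g$), so no end normalization can turn it into the geometric law $x(1-x)^{g-1}$; integrals of this type are precisely what produce the $1/g$ factors in the paper's Lemma~\ref{lem:TN_annexe}, which answers a different question. A secondary slip: your first display, with the extra $1/L$, is $\P(G=g,\ \text{the chosen leaf is the unique mark})$ and not $\P(G=g,\RR)$; your later identity $h_g=\E[\tfrac{N_g}{L}\one_{\RR}]$ is the correct one.

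The irony is that the $1/L$ you identified as the main obstacle cancels for free, so no integral trick is needed. Since the uniform choice of leaf and the marking are independent given the tree, $\P(G=g,\RR)=\E\big[\tfrac{N_g}{L}\cdot L\beta(1-\beta)^{L-1}\big]=\beta\,\E\big[N_g(1-\beta)^{L-1}\big]$, and $\phi_g(s):=\E[N_g s^{L}]$ \emph{does} satisfy a clean recursion because $N_g$ enters linearly (no product of counts appears): $\phi_g(s)=2p\,F(s)\,\phi_{g-1}(s)$ with $\phi_1(s)=(1-p)s$ (note the paper's convention, visible in its recursion $v_{1,n}=\one_{\{n=1\}}(1-p)$, places the root-leaf at generation $1$; under your $0$-indexing the single-node marked tree carries positive mass at $g=0$, which the stated formula cannot accommodate). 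With $F(1-\beta)=q=\frac{1-x}{2p}$, where $q$ solves $q=(1-p)(1-\beta)+pq^2$ as you computed, this yields $\P(G=g,\RR)=\beta(1-p)(1-x)^{g-1}$ and $\P(\RR)=\beta(1-p)/x$, hence the geometric law. Equivalently, one may first check that conditionally on $\RR$ the generation of the uniformly chosen leaf has the same law as that of the unique marked leaf (both conditional laws equal $\E[N_g(1-\beta)^{L-1}]/\E[L(1-\beta)^{L-1}]$), and then the quantity $\eta_g=\P(\RR,\ \text{mark at generation }g)$ obeys the trivial recursion $\eta_g=2pq\,\eta_{g-1}$. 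Either repair is close in spirit to the paper's proof, which conditions on the leaf count $n$, uses $\P(\RR\mid\lambda(\T)=n)=n\beta(1-\beta)^{n-1}$ (the factor $n$ performing exactly this cancellation), and evaluates the resulting generating series.
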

	
 Considering the notation of the previous Lemma~\ref{lemma:GWgen}, the tree under consideration is $\T_N:=\mathcal{T}(p_N, \beta_N)$, with $p_N$ and $\beta_N$ defined in \eqref{def:pNbetaN}, conditioned on the event $\mathcal{E}^{\T_N}$ that the root can not become resistant, and identifying marked leaves with \textit{ancestral resistant} cells. Indeed, according to the previous consideration, $p_N$ corresponds to the probability that a cell divides and
	$$
	\beta_N=\frac{\gamma_N(b_0+d_0)}{d_0+\gamma_N b_0}=\frac{\gamma_N}{1-p_N}
	$$
    corresponds to the probability that a leaf is the result of a "resistant event" and not a "death event".
	Hence, we have for all $g\in \N^*$, 
	\begin{equation*}
	    \P(G_N=g)=\P\left(G^{\T_N}=g+1|\mathcal{R}^{\T_N},\mathcal{E}^{\T_N}\right)
	\end{equation*}
	On the event $\mathcal{R}^{\T_N}\cap\mathcal{E}^{\T_N}$, the root of $\T_N$ divides into two cells with probability $1$ that will then evolve independently from one another and that will give two independent subtrees $\T_N^{(1)}$ and $\T_N^{(2)}$ whose laws follow the same law as $\T_N$. Let us denote by $\RR^{\T}_0$ the probability that the tree $\T$ has no marked leaf. Thus, using the Markov property
	\begin{align*}
	    \P(G_N=g)&=\frac{\P\left(\left\{G^{\T_N}=g+1\right\}\cap\mathcal{R}^{\T_N}\cap\mathcal{E}^{\T_N}\right)}{\P\left(\mathcal{R}^{\T_N}\cap\mathcal{E}^{\T_N}\right)}\\
	    &=\frac{p_N\left[\P\left(\left\{G^{\T_{N}^{(1)}}=g\right\}\cap\mathcal{R}^{\T_{N}^{(1)}}\cap\mathcal{R}_0^{\T_{N}^{(2)}}\right)+\P\left(\left\{G^{\T_{N}^{(2)}}=g\right\}\cap\mathcal{R}^{\T_{N}^{(2)}}\cap\mathcal{R}_0^{\T_{N}^{(1)}}\right)\right]}{p_N\left[\P\left(\mathcal{R}^{\T_{N}^{(1)}}\cap\mathcal{R}_0^{\T_{N}^{(2)}}\right)+\P\left(\mathcal{R}^{\T_{N}^{(2)}}\cap\mathcal{R}_0^{\T_{N}^{(1)}}\right)\right]}.
	\end{align*}
	Since $\T_N^{(1)}$ and $\T_N^{(2)}$ are two independent trees with the same law as $\T_N$, we finally have,
	\begin{align*}
	    \P(G_N=g) =\frac{2\P\left(\left\{G^{\T_N}=g\right\}\cap\mathcal{R}^{\T_N}\right)\P\left(\mathcal{R}_0^{\T_N}\right)}{2\P\left(\mathcal{R}^{\T_N}\right)\P\left(\mathcal{R}_0^{\T_N}\right)}.
	\end{align*}
	We conclude the proof of \eqref{eqG} using Lemma \ref{lemma:GWgen}.\\
	
	To find the density of $T_N$, it is sufficient to notice that the life time of each sensitive cells is distributed as exponential r.v. with parameter $\delta_0=b_0+d_0$, and that an \textit{ancestral resistant cell} has $G_N$ ancestors. Thus, $T_N=\sum_{i=1}^{G_N}\mathcal{E}_i$, where $(\mathcal{E}_i)_{i\in \N}$ is a sequence of i.i.d. r.v. of exponential law with parameter $\delta_0$ and independent from $G_N$. Hence $T_N$ follows an exponential law with parameter $x_N\delta_0$. This ends the proof of Proposition~\ref{th:ancestralresistant}.
	
\end{proof}

\begin{proof}[Proof of Lemma~\ref{lemma:GWgen}]
    
    To simplify notation in the proof of this lemma, we will write $\mathcal{T}$ for $\mathcal{T}(p,\beta)$, $\RR$ for $\RR^\T$ and $G$ for $G^\T$. 
    The main idea to prove Equation~\eqref{eqPhr} is to condition on the number of leaves. We will denote the number of leaves of tree $\T$ by $\lambda(\T)$. Then, for all $g\in \N^*$,
    \begin{align*}
        \P(G=g|\RR)&=\frac{\sum_{n\geq 1}\P(G=g,\RR,\lambda(\T)=n)}{\P(\RR)}\\
        &=\frac{\sum_{n\geq 1}\P(\RR|G=g,\lambda(\T)=n)\P(G=g,\lambda(\T)=n)}{\sum_{n\geq 1}\P(\RR|\lambda(\T)=n)\P(\lambda(\T)=n)}.
    \end{align*}
    Notice that for all $g,n\in \N^*$, $\P(\RR|G=g,\lambda(\T)=n)=\P(\RR|\lambda(\T)=n)=n\beta (1-\beta)^{n-1}$. Indeed, the probability to have exactly one resistant among the n leaves of $\T$ is given by $\beta (1-\beta)^{n-1}$ and we have $n$ possible configurations. Then by writing for all $g,n\in \N^*$, 
    \begin{equation}
        \label{def:unvhn}
        v_{g,n}:=\P(G=g,\lambda(\T)=n)\quad \text{and} \quad u_n:=\P(\lambda(\T)=n),
    \end{equation}
    we find
    \begin{equation}\label{eq:HcondR1}
        \P(G=g|\RR)=\frac{\sum_{n\geq 1}n(1-\beta)^{n-1}v_{g,n}}{\sum_{n\geq 1}n(1-\beta)^{n-1}u_n}.
    \end{equation}
    To end the proof, we thus have to study the two sums of the r.h.s. of Equation~\eqref{eq:HcondR1}.

    Let us first deal with $\sum_{n\geq 1}n(1-\beta)^{n-1}u_n$. To this aim, notice that using the Markov property at the time of the first generation, we have for all $n\in\N^*$
    \[
    u_n=\one_{\{n=1\}}(1-p)+\one_{\{n\geq 2\}}p\sum_{i=1}^{n-1}u_iu_{n-i}.
    \]
    We deduce from this inductive relation that there exists a sequence $(\alpha_n)_{n\geq 1}$ independent from $p$ such that $\alpha_1=1$, $\alpha_n=\sum_{i=1}^{n-1}\alpha_i\alpha_{n-i}$ for all $n\geq 2$ and $u_n=\alpha_n(1-p)^n p^{n-1}$ for all $n\geq 1$. Since by definition (cf \eqref{def:unvhn}) $\sum_{n\geq 1}u_n=1$ whatever $p<1/2$, we deduce that for all $p<1/2$,
    \begin{equation}
        \label{eq:linksumalphap}
        \sum_{n\geq1}\alpha_n p^n(1-p)^n=p.
    \end{equation}
    Then, using classical results for derivative of function series, we have that
    \begin{equation}\label{eq:derivee}
    \sum_{n\geq 1} n(1-\beta)^{n-1}u_n=F'(1-\beta), \text{ with } F(y)=\sum_{n\geq 1}u_n y^n.
    \end{equation} 
    Moreover,  
    \begin{equation}\label{eq:F}
        F(y)= \frac{1}{p}\sum_{n\geq 1}\alpha_n [p(1-p)y]^n=\frac{1}{p}\sum_{n\geq 1}\alpha_n [\tilde{p}(y)(1-\tilde{p}(y))]^n= \frac{\tilde{p}(y)}{p},
    \end{equation}
    with $\tilde{p}(y)=\frac{1}{2}(1-\sqrt{1-4yp(1-p)})$, which implies that $\tilde{p}(y)$ is for all $y\in [0,1]$ the unique root in $[0,1/2]$ of the following polynomial $X(1-X)=y \, p \, (1-p)$; and where the last equality is a consequence of~\eqref{eq:linksumalphap} which is true for any $p<1/2$. In addition with Equation~\eqref{eq:derivee} and the definition of $x$ (cf Formula~\eqref{def:x}), we finally deduce that
    \begin{equation}\label{eq:sumbetaun}
    \sum_{n\geq 1}n(1-\beta)^{n-1}u_n=\frac{\tilde{p}'(1-\beta)}{p}=\frac{1-p}{x}.
    \end{equation}
    \smallskip
    
    We now deal with the sum $\sum_{n\geq 1}n(1-\beta)^{n-1}v_{g,n}$ for all $g\in \N^*$. Similarly, we first give an inductive relation on the sequence $(v_{g,n})_{1\leq g\leq n}$, by using again the Markov property at the time of the first event:
    \begin{equation*}
	v_{g,n}= (1-p)\one_{\{g=n=1\}} + 2 \, p \; \one_{2\leq g\leq n} \; \sum_{i=1}^{n-1}\frac{n-i}{n}v_{g-1,n-i}u_{i},
	\end{equation*}
	where the factor $(n-i)/n$ comes from the probability that the chosen leaf belongs to the sub-tree with $n-i$ leaves. From this last equation, we deduce by induction on the parameter $g$ that
	\begin{align}
	    v_{1,n}&=\one_{\{n=1\}}(1-p)\nonumber\\
	    \text{for all $g\geq2$, } v_{g,n}&=\one_{\{n\geq g\}}\frac{2^{g-1}}{n}(1-p)^n p^{n-1}\gamma_{g,n}\label{rec:v}\\
	    &\qquad\text{with }\gamma_{g,n}=\sum_{i=1}^{n-(g-1)}\alpha_i\gamma_{g-1,n-i}\one_{\{n\geq g\geq 3\}}\text{ and }\gamma_{2,n}=\alpha_{n-1}\one_{\{n\geq 2\}}.\nonumber
	\end{align}
	 Then, as $p<1/2$, for all $g\geq3$, 
	\begin{align*}
	    \sum_{n\geq 1}n(1-\beta)^{n-1}v_{g,n}&=\frac{2^{g-1}}{p(1-\beta)}\sum_{n\geq g}[(1-\beta)p(1-p)]^n\gamma_{g,n}\\
	    &=\frac{2^{g-1}}{p(1-\beta)}\sum_{n\geq g}\sum_{i=1}^{n-g+1}\alpha_i[(1-\beta)p(1-p)]^{i}\gamma_{g-1,n-i}[(1-\beta)p(1-p)]^{n-i},\\
	    &=\frac{2^{g-1}}{p(1-\beta)}\left(\sum_{n\geq 1} \alpha_n[(1-\beta)p(1-p)]^{n}\right)\left(\sum_{n\geq g-1}\gamma_{g-1,n}[(1-\beta)p(1-p)]^{n}\right),\nonumber
	    \end{align*}
	    where the last equality follows from the identification of a Cauchy product of series. Thus, using again an induction and then Equation~\eqref{eq:linksumalphap} and the definition of $\tilde{p}$, we obtain for all $g\geq3$, 
	    \begin{align}
	   \sum_{n\geq 1}n(1-\beta)^{n-1}v_{g,n}
	   &=\frac{2^{g-1}}{p(1-\beta)}\left(\sum_{n\geq 1} \alpha_n[(1-\beta)p(1-p)]^{n}\right)^{g-2}\left(\sum_{n\geq 2}\alpha_{n-1}[(1-\beta)p(1-p)]^{n}\right),\nonumber\\
	   &=\frac{2^{g-1}}{p(1-\beta)}\Big(\tilde{p}(1-\beta)\Big)^{g-1}(1-\beta)p(1-p) ,\nonumber\\
	   &=(1-x)^{g-1}(1-p),\label{eq:sum2}
	\end{align}
	which gives the value of the second sum. 
 Moreover, as $\sum_{n\geq 1}n(1-\beta)^{n-1}v_{1,n}=1-p$ and \\ $\sum_{n\geq 1}n(1-\beta)^{n-1}v_{2,n} = (1-p)(1-x)$, Formula \eqref{eq:sum2} is also true for $g \in \{1,2\}$.\\
    
    Finally, using Equations~\eqref{eq:sumbetaun} and~\eqref{eq:sum2}  in~\eqref{eq:HcondR1}, we obtain Equation~\eqref{eqPhr}, which concludes the proof of Lemma~\ref{lemma:GWgen}.

\end{proof}

\begin{proof}[Proof of Lemma~\ref{lem:probaanc}]
Using the notation and the arguments of the proofs of Proposition~\ref{th:ancestralresistant} and Lemma~\ref{lemma:GWgen}, we have
\begin{align*}
    \P(\A_1^N)&=\frac{\P(\RR^{\T_N}\cap \EE^{\T_N})}{\P(\EE^{\T_N})}=\frac{2p_N\P(\RR^{\T_N})\P(\RR_0^{\T_N})}{1-\gamma_N}\\
    &=\frac{2p_N}{(1-\gamma_N)}\sum_{n=1}^\infty u_n n\beta_N(1-\beta_N)^{n-1}\times \sum_{n=1}^\infty u_n (1-\beta_N)^{n}\\
    &=\frac{2p_N}{(1-\gamma_N)}\frac{\beta_N(1-p_N)}{x_N}\frac{1-x_N}{2p_N}\\
    &=\frac{1-x_N}{x_N(1-\gamma_N)}\gamma_N.
\end{align*}
where the third line is implied by~\eqref{eq:sumbetaun} and \eqref{eq:F} and the last one by $\beta_N(1-p_N)=\gamma_N$. Finally, to deduce~\eqref{eq:1anc}, it remains to find an approximation of $x_N$. 
Let notice that $1-p_N=\frac{d_0+\gamma_N b_0}{\delta_0}$ and $1-\beta_N=\frac{d_0(1-\gamma_N)}{d_0+\gamma_N b_0}$ then
\begin{align}
    x_N&=\left(1-4p_N(1-p_N)(1-\beta_N)\right)^{1/2}\nonumber\\
    &=\left(1-4\frac{(1-\gamma_N)^2b_0d_0}{\delta_0^2}\right)^{1/2}\nonumber\\
    &=\frac{\lambda_0}{\delta_0}\left(1+4\frac{b_0d_0 \gamma_N(2-\gamma_N)}{\lambda_0^2}\right)^{1/2}\nonumber\\
    &=\frac{\lambda_0}{\delta_0}+4\frac{b_0d_0}{\lambda_0 \delta_0}\gamma_N+\underset{N\to\infty}{o}(\gamma_N).\label{approx:xn}
\end{align}
Hence $(1-x_N)/(x_N(1-\gamma_N))$ converges to $2b_0/\lambda_0$ when $N$ increases to $\infty$, and we conclude \eqref{eq:1anc}.\\

Let us now deal with~\eqref{eq:k2anc}. To this aim, we denote by $\RR_k^{\T_N}$ the event that the tree $\T_N$ presents exactly $k$ \textit{ancestral resistant} cells. Notice that $\EE^{\T_N}\subset\RR^{\T_N}_k$ as soon as $k\geq 2$. Then
\begin{align*}
    \sum_{k\geq 2}k\P(\A_k^N)&=\sum_{k\geq 2}k\frac{\P\left(\RR^{\T_N}_k\cap \EE^{\T_N}\right)}{\P\left( \EE^{\T_N}\right)}=\frac{1}{1-\gamma_N}\sum_{k\geq 2}k\P\left(\RR^{\T_N}_k\right)\\
    &=\frac{1}{1-\gamma_N}\sum_{k\geq 2}k\sum_{n\geq k}\binom{n}{k}\beta_N^k(1-\beta_N)^{n-k}u_n\\
    &=\frac{1}{1-\gamma_N}\sum_{n\geq 2}u_n \sum_{k=2}^n k\binom{n}{k}\beta_N^k(1-\beta_N)^{n-k}\\
    &=\frac{1}{1-\gamma_N}\sum_{n\geq 2}u_n\left(n\beta_N-n\beta_N(1-\beta_N)^{n-1}\right)\\
    &=\frac{\beta_N}{1-\gamma_N}\left(\frac{1-p_N}{1-2p_N}-\frac{1-p_N}{x_N}\right),
\end{align*}
where the last equality is a consequence of \eqref{eq:sumbetaun}. In addition with \eqref{approx:xn} and the fact that $\beta_N(1-p_N)=\gamma_N$,
\begin{align*}
    \sum_{k\geq 2}k\P(\A_k^N)&=\frac{\gamma_N}{1-\gamma_N}\frac{1}{(1-2p_N)x_N}\left(x_N-1+2p_N\right)\\
    &=\frac{\gamma_N}{1-\gamma_N}
    \frac{1}{(1-2p_N)x_N}\left(\frac{\lambda_0}{\delta_0}+4\frac{b_0d_0}{\lambda_0\delta_0}\gamma_N+\underset{N\to\infty}{o}(\gamma_N)-\frac{\lambda_0}{\delta_0}-\frac{2b_0}{\delta_0}\gamma_N\right)\\
    &=\frac{2b_0\delta_0^2}{\lambda_0^3}\gamma_N^2+\underset{N\to\infty}{o}(\gamma_N^2),
\end{align*}
which concludes the proof of Lemma~\ref{lem:probaanc}.
\end{proof}

In the next section, we will see how to use Lemma~\ref{lem:probaanc} and the random variables $G_N$ and $T_N$ to compute the expected SFS of $Z^N$ with neutral mutations starting with $N$ sensitive cells.

\section{Expectation of the \textit{Site frequency spectrum}}
\label{sec:SFS}

We are now ready to prove Theorem \ref{mainth} and Theorem \ref{mainth2}, presented in Section~\ref{sec:model}, which give formulas and approximations when $N$ is large of the expected \textit{site frequency spectrum (SFS)} of a rescued population. As a reminder, the SFS is defined, for all $i\in \N^*$, as $S_i(t)$ the number of neutral mutations carried by exactly $i$ resistant cells alive at time $t$. To this aim, we will decompose $S_i^N(t)$ for all $i\in\N$ into two parts: $S_i^N(t)=\Sb_i^N(t)+\Su_i^N(t)$ where

\begin{itemize}
    \item $\Sb_i^N(t)$ counts mutations that appeared in a resistant cell;
    \item $\Su_i^N(t)$ counts mutations that appeared in a sensitive cell.
\end{itemize}

Using such decomposition we will prove our two main theorems by studying independently those two quantities, then combining them to deduce our main results. Indeed, Theorem~\ref{mainth} follows from Lemma~\ref{lem:Sbieq} and Lemma~\ref{lem:Sui} and Theorem~\ref{mainth2} follows from Lemma~\ref{lem:Sbxeq} and Lemma~\ref{lem:expSuiNapp}. 

Each of the following two subsections focuses on one of the two quantities above.

\subsection{Number of neutral mutations appeared in a resistant cell}
\label{ssec:sfsR}
The aim of this subsection is the study of $\E[\Sb_i^N(t_N)]$ for all $i\in \N^*$ and all $t>0$. To this aim, we will decompose the quantity into two parts such that, for all $i\in \N^*$, $t>0$,
  \begin{equation}
       \label{eq:expSbi}
       \E\left[\Sb_i^N(t_N)\right]=P(N,i) +R(N,i),
   \end{equation}
   where
   \begin{align}
       &P(N,i):=N^{1+\lambda_1t-\alpha}\frac{\delta_0(1-x_N)\gamma w }{(1-\gamma _N)} \int_0^{t_N} h_i(e^{\lambda_1\,(t_N-s)}) e^{-(\lambda_1 + x_N \delta_0)\, s} \;ds \label{eq:PNi}\\
       &\text{with }h_i(x):= \int_0^{(x-1)/(x-\frac{d_1}{b_1})} \frac{1-y}{1-\frac{d_1}{b_1}y}y^{i-1}dy,\label{def:hi}
       \end{align}
       and $R(N,i)$ will be given in \eqref{eq:RNidef}.\\
       $P(N,i)$ represents the part provided by the progeny that include exactly one \textit{ancestral resistant} cell
       and $R(N,i)$ represents the part provided by the progeny that carry two or more \textit{ancestral resistant} cells. When $N\to\infty$, we will be able to prove that this second part is negligible with respect to $P(N,i)$ in every situation of our interest.\\

Firstly, we state a result in the case where $i$ is fixed (see Lemma~\ref{lem:Sbieq}). Then we deal with the cases where $i$ is of order $N^{\lambda_1 t}$ (see Lemma~\ref{lem:Sbxeq}). $N^{\lambda_1 t}$ corresponds to the order size of the resistant cells number at time $t_N$. We also deduce a theoretical approximation when $N$ tends to infinity of $\E[\Sb_{x_1,x_2}^N(t)]$, where $\Sb^N_{x_1,x_2}(t)$ is the number of mutations (that appeared in a resistant cell) carried by a number of resistant cells between $x_1e^{\lambda_1 t_N}$ and  $x_2e^{\lambda_2 t_N}$  at time $t$, 
\begin{equation}\label{def_Sbarx1x2}
\Sb_{x_1,x_2}^N(t)=\sum_{i\in (x_1e^{\lambda_1 t_N},x_2e^{\lambda_1 t_N})} \Sb^N_i(t).
\end{equation}

\begin{Lem}
\label{lem:Sbieq}
For all $i\in \N^*$, $t>0$, 
   \begin{equation*}
        \E\left[\Sb_i^N(t_N)\right]\underset{N\to\infty}{\sim} I(i)\frac{2b_0\gamma\omega}{\lambda_1+\lambda_0}N^{1+\lambda_1 t-\alpha}, \text{ with } I(i):=\int_0^1\frac{1-y}{1-d_1y/b_1}y^{i-1}dy,
   \end{equation*}
   and
   \begin{equation}
       \underset{N\to\infty}{\limsup}\;\underset{i\in\N^*}{\sup}\frac{R(N,i)}{N^{1+\lambda_1t-2\alpha}}<\infty. 
       \label{eq:RNi}
   \end{equation}
\end{Lem}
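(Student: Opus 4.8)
The plan is to first turn the decomposition \eqref{eq:expSbi} into an explicit identity (thereby defining $R(N,i)$), then estimate $P(N,i)$ and $R(N,i)$ separately: $P(N,i)$ will produce the claimed equivalent and $R(N,i)$ will be negligible because it is of order $N^{1+\lambda_1 t-2\alpha}$.

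\emph{Step 1 (making the decomposition explicit).} The progenies of the $N$ initial sensitive cells are i.i.d., and at every division each daughter receives brand-new neutral mutation sites; hence each site counted in $\Sb_i^N(t_N)$ is carried exactly by the descendants of the cell where it arose, so it lives inside the resistant subtree of a single \textit{ancestral resistant} cell belonging to a single initial progeny. By linearity, $\E[\Sb_i^N(t_N)] = N\,\E\big[\sum_a \Sb_i^{a}(t_N)\big]$, the sum running over the \textit{ancestral resistant} cells $a$ of one progeny, where $\Sb_i^{a}(t_N)$ counts the sites appearing at resistant divisions of the subtree rooted at $a$ and carried by exactly $i$ cells at $t_N$. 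Splitting along the events $\A_k^N$: on $\A_1^N$ the unique \textit{ancestral resistant} cell appears at time $T_N$ with density $f_{T_N}$ of Proposition~\ref{th:ancestralresistant}; since $\A_1^N$ and $\{T_N=s\}$ depend only on the sensitive genealogy and on which sensitive divisions produce a resistant daughter, conditionally on $\A_1^N\cap\{T_N=s\}$ the resistant subtree is a supercritical birth--death process with rates $(b_1,d_1)$ issued from one cell at time $s$ and decorated with neutral mutations, so Lemma~\ref{lemS1} yields $\E[\Sb_i^a(t_N)\mid\A_1^N,T_N=s]=\omega\,e^{\lambda_1(t_N-s)}h_i\!\big(e^{\lambda_1(t_N-s)}\big)$ for $s\le t_N$ and $0$ otherwise. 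Substituting $\P(\A_1^N)=\tfrac{1-x_N}{x_N(1-\gamma_N)}\gamma_N$ (Lemma~\ref{lem:probaanc}), $f_{T_N}(s)=\delta_0 x_N e^{-\delta_0 x_N s}$, $\gamma_N=\gamma/N^\alpha$ and $e^{\lambda_1 t_N}=N^{\lambda_1 t}$ reproduces exactly \eqref{eq:PNi}; we then set $R(N,i):=N\,\E\big[\one_{\{\text{at least }2\text{ ancestral resistant cells}\}}\sum_a\Sb_i^a(t_N)\big]$.

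\emph{Step 2 (equivalent of $P(N,i)$).} From \eqref{approx:xn}, $x_N\to\lambda_0/\delta_0$, hence $\delta_0(1-x_N)\to 2b_0$, $x_N\delta_0\to\lambda_0$ and $1-\gamma_N\to 1$, so the prefactor of \eqref{eq:PNi} tends to $2b_0\gamma\omega$. For the integral: for every $x\ge 1$ one has $0\le h_i(x)\le I(i)$ (the upper endpoint $(x-1)/(x-d_1/b_1)$ lies in $[0,1]$ because $d_1<b_1$, and the integrand is nonnegative) and $h_i\!\big(e^{\lambda_1(t_N-s)}\big)\to I(i)$ for each fixed $s$ since $t_N\to\infty$; moreover $x_N\delta_0\ge\lambda_0/2$ for $N$ large, so the integrand is dominated by $I(i)e^{-(\lambda_1+\lambda_0/2)s}$. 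Dominated convergence then gives $\int_0^{t_N}h_i(e^{\lambda_1(t_N-s)})e^{-(\lambda_1+x_N\delta_0)s}\,ds\to I(i)/(\lambda_1+\lambda_0)$, whence $P(N,i)\sim I(i)\tfrac{2b_0\gamma\omega}{\lambda_1+\lambda_0}N^{1+\lambda_1 t-\alpha}$.

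\emph{Step 3 (bound on $R(N,i)$ and conclusion).} Conditionally on the sensitive genealogy and on the birth times of the \textit{ancestral resistant} cells, the resistant subtrees are independent, so $\E[\sum_a\Sb_i^a(t_N)\mid\text{sensitive part}]=\sum_a\omega\,e^{\lambda_1(t_N-T_a)}h_i(e^{\lambda_1(t_N-T_a)})\one_{\{T_a\le t_N\}}$, which is at most $(\text{number of ancestral resistant cells})\times\omega\,e^{\lambda_1 t_N}h_i(e^{\lambda_1 t_N})\le(\text{number of ancestral resistant cells})\times\omega\,N^{\lambda_1 t}I(1)$, using that $u\mapsto e^{\lambda_1 u}h_i(e^{\lambda_1 u})$ is nondecreasing and $h_i\le I(i)\le I(1)$. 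Since $\{\text{at least }2\text{ ancestral resistant cells}\}=\cup_{k\ge2}\A_k^N$ is measurable with respect to the sensitive part, taking expectations gives $R(N,i)\le N\,\omega\,N^{\lambda_1 t}I(1)\sum_{k\ge2}k\P(\A_k^N)$, and \eqref{eq:k2anc} bounds $\sum_{k\ge2}k\P(\A_k^N)$ by $C\gamma_N^2=C\gamma^2 N^{-2\alpha}$ for $N$ large; hence $R(N,i)\le C'\,N^{1+\lambda_1 t-2\alpha}$ with $C'$ independent of $i$, which is \eqref{eq:RNi}. Finally $\alpha>0$ and $I(i)>0$ give $R(N,i)=o(N^{1+\lambda_1 t-\alpha})=o(P(N,i))$, so $\E[\Sb_i^N(t_N)]=P(N,i)+R(N,i)\sim P(N,i)\sim I(i)\tfrac{2b_0\gamma\omega}{\lambda_1+\lambda_0}N^{1+\lambda_1 t-\alpha}$.

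\emph{Expected main obstacle.} The genuinely delicate point is the conditioning in Step~1: one must cleanly separate the randomness touched by $\A_1^N$ (the sensitive genealogy and the ``becomes resistant'' marks) from the randomness it leaves untouched (the evolution of resistant cells and the mutations borne by resistant subtrees), so as to legitimately declare the resistant subtree of the \textit{ancestral resistant} cell to be a fresh supercritical birth--death process given $T_N$, and then invoke the exact birth--death SFS formula of Lemma~\ref{lemS1}. The two subsequent estimates are routine: a dominated-convergence computation and a first-moment bound built on Lemma~\ref{lem:probaanc}.
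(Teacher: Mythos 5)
Your proposal is correct and follows essentially the same route as the paper: the same splitting into the contribution $P(N,i)$ of progenies with exactly one \textit{ancestral resistant} cell (via Proposition~\ref{th:ancestralresistant}, Lemma~\ref{lem:probaanc} and Lemma~\ref{lemS1}, then dominated convergence) and a remainder $R(N,i)$ from progenies with at least two, bounded through $\sum_{k\ge2}k\P(\A_k^N)=O(\gamma_N^2)$. The only cosmetic differences are your use of the bound $h_i\le I(i)\le I(1)$ and conditioning on the sensitive part, where the paper bounds $h_i$ by $b_1/\lambda_1$ and integrates against $f_{T^k_N}$.
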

\medskip
\begin{Lem}
\label{lem:Sbxeq}
 For all $t>0$, 
 \begin{itemize}
     \item[$(i)$] for $i_N=\lceil xN^{\lambda_1t}\rceil +1$ with $x> 0$, 
 \begin{equation*}
  \E\left[\Sb_{i_N}^N(t_N)\right]\underset{N\to\infty}{\sim} b_0\gamma \omega \lambda_1 K'(x) \, N^{1-\lambda_1 t -\alpha},
 \end{equation*}
 where $K'$ is the derivative of the function $K$ defined in \eqref{def:K} and 
 \begin{equation}
       \underset{N\to\infty}{\limsup}\;\underset{i\in\N^*}{\sup}\frac{R(N,i_N)}{N^{1-\lambda_1t-2\alpha}}<\infty.\label{eq:RNiN}
   \end{equation}
 \item[$(ii)$] Moreover for all $x_1,x_2\in (0,\infty]$ with $x_1<x_2$,
   \begin{equation*}
        \E\left[\Sb_{x_1,x_2}^N(t_N)\right]\underset{N\to\infty}{\sim} b_0 \gamma \omega \lambda_1 \big(K(x_1)-K(x_2))\big) \, N^{1 -\alpha}, 
       \end{equation*}
        where $K$ is defined in \eqref{def:K}.
 \end{itemize}
\end{Lem}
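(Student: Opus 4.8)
The plan is to work from the exact decomposition \eqref{eq:expSbi}, $\E[\Sb_i^N(t_N)]=P(N,i)+R(N,i)$, treating the two terms independently: the ``multiple \textit{ancestral resistant}'' part $R(N,\cdot)$ should be negligible in all regimes, while the asymptotics of $P(N,\cdot)$ will come from a Laplace-type analysis of the incomplete integral $h_i$. Throughout I will use the expansion \eqref{approx:xn} of $x_N$, which gives $\delta_0(1-x_N)\gamma\omega/(1-\gamma_N)\to 2b_0\gamma\omega$ and $x_N\delta_0\to\lambda_0$. What makes part $(i)$ non-trivial is that $i_N\sim xN^{\lambda_1 t}$ is precisely of the order of the number of resistant cells issued from one \textit{ancestral resistant} cell at time $t_N$, so $h_{i_N}$ must be analysed at the matching scale $i\asymp e^{\lambda_1(t_N-s)}$ rather than merely bounded.

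For $(i)$ the crux is a pointwise estimate on $h_{i_N}$. Fixing $s>0$ and writing $\xi_N:=e^{\lambda_1(t_N-s)}=N^{\lambda_1 t}e^{-\lambda_1 s}$, the substitution $y=1-v/i_N$ in \eqref{def:hi} --- for which the upper limit $a(\xi):=(\xi-1)/(\xi-d_1/b_1)=1-\tfrac{\lambda_1/b_1}{\xi-d_1/b_1}$ satisfies $i_N(1-a(\xi_N))\to c(s):=\tfrac{x\lambda_1}{b_1}e^{\lambda_1 s}$ --- rewrites $i_N^2\,h_{i_N}(\xi_N)=\int_{i_N(1-a(\xi_N))}^{i_N}\tfrac{v}{\lambda_1/b_1+(d_1/b_1)(v/i_N)}\,(1-v/i_N)^{i_N-1}\,dv$, whose integrand tends to $\tfrac{b_1}{\lambda_1}v e^{-v}$ and is dominated by $\tfrac{b_1}{\lambda_1}v e^{-v/2}$. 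Dominated convergence thus gives $i_N^2 h_{i_N}(\xi_N)\to\tfrac{b_1}{\lambda_1}\int_{c(s)}^\infty v e^{-v}\,dv=\tfrac{b_1}{\lambda_1}(1+c(s))e^{-c(s)}$, i.e. $N^{2\lambda_1 t}h_{i_N}(\xi_N)\to\tfrac{b_1}{\lambda_1 x^2}(1+c(s))e^{-c(s)}$. Since $\xi\mapsto h_i(\xi)$ is non-decreasing and $h_i(\xi)\le h_i(+\infty)=I(i)\le \tfrac{b_1}{\lambda_1 i^2}$ (using $(1-d_1y/b_1)^{-1}\le b_1/\lambda_1$), the quantity $N^{2\lambda_1 t}h_{i_N}(e^{\lambda_1(t_N-s)})$ is uniformly bounded by $b_1/(\lambda_1 x^2)$, and a second dominated convergence argument (dominating function $\tfrac{b_1}{\lambda_1 x^2}e^{-\lambda_1 s}$, using $e^{-(\lambda_1+x_N\delta_0)s}\le e^{-\lambda_1 s}$) yields $N^{2\lambda_1 t}\!\int_0^{t_N}\! h_{i_N}(e^{\lambda_1(t_N-s)})e^{-(\lambda_1+x_N\delta_0)s}\,ds\to\tfrac{b_1}{\lambda_1 x^2}\int_0^\infty(1+c(s))e^{-c(s)}e^{-(\lambda_1+\lambda_0)s}\,ds$. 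Reinserting the prefactor gives $P(N,i_N)\sim \tfrac{2b_0 b_1\gamma\omega}{\lambda_1 x^2}\bigl(\int_0^\infty(1+c(s))e^{-c(s)}e^{-(\lambda_1+\lambda_0)s}\,ds\bigr)N^{1-\lambda_1 t-\alpha}$, and it only remains to recognise this as $b_0\gamma\omega\lambda_1 K'(x)N^{1-\lambda_1 t-\alpha}$: differentiating \eqref{def:K} (note $\partial_x e^{-\frac{x\lambda_1}{b_1}e^{\lambda_1 s}}=-\tfrac{c(s)}{x}e^{-c(s)}$) and integrating by parts once in $s$ --- equivalently, first rewriting $K(x)=-\tfrac{2b_1}{\lambda_1^2 x}\int_0^\infty e^{-c(s)}e^{-(\lambda_1+\lambda_0)s}\,ds$ by parts, then differentiating --- gives $K'(x)=\tfrac{2b_1}{\lambda_1^2 x^2}\int_0^\infty(1+c(s))e^{-c(s)}e^{-(\lambda_1+\lambda_0)s}\,ds$, which is exactly what is needed.

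For the remainder, I would bound $R(N,i)$ from its definition \eqref{eq:RNidef}, whose structure parallels that of $P(N,i)$ with the weight $\P(\A_1^N)$ replaced by the contribution of progenies carrying at least two \textit{ancestral resistant} cells; by \eqref{eq:k2anc} this costs an extra factor $\gamma_N=O(N^{-\alpha})$, so with the uniform bound $h_{i_N}\le \tfrac{b_1}{\lambda_1 i_N^2}=O(N^{-2\lambda_1 t})$ one gets $R(N,i_N)=O(N^{1-\lambda_1 t-2\alpha})$, i.e. \eqref{eq:RNiN}, hence $R(N,i_N)=o(P(N,i_N))$, which with the previous paragraph proves $(i)$. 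Part $(ii)$ then follows by summation: the estimates above being uniform for $i/N^{\lambda_1 t}$ in compact subsets of $(0,\infty)$, one has $\E[\Sb_{x_1,x_2}^N(t_N)]=\sum_{i\in(x_1 e^{\lambda_1 t_N},x_2 e^{\lambda_1 t_N})}(P(N,i)+R(N,i))$, where $\tfrac{1}{N^{\lambda_1 t}}\sum_i K'(i/N^{\lambda_1 t})\to\int_{x_1}^{x_2}K'(x)\,dx$ as a Riemann sum (for $x_2=+\infty$ one uses that $K'$ decays exponentially, so $K'\in L^1(x_1,+\infty)$ and the tail of the sum is controlled by $\sum_i i^{-2}$), while $\sum_i R(N,i)=O(N^{\lambda_1 t}\cdot N^{1-\lambda_1 t-2\alpha})=o(N^{1-\alpha})$ --- this gives the $N^{1-\alpha}$ asymptotics claimed in $(ii)$. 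One can also bypass the uniformity by computing $\sum_i P(N,i)$ directly: since the geometric summation cancels the numerator factor $1-y$, $\sum_i h_i(\xi)=\int_0^{a(\xi)}\tfrac{y^{m_1}-y^{m_2}}{1-d_1y/b_1}\,dy$ with $m_1\sim x_1 N^{\lambda_1 t}$, $m_2\sim x_2 N^{\lambda_1 t}$, and the same Laplace/dominated-convergence machinery gives $N^{\lambda_1 t}\sum_i h_i(e^{\lambda_1(t_N-s)})\to\tfrac{b_1}{\lambda_1}\bigl(\tfrac{e^{-c_1(s)}}{x_1}-\tfrac{e^{-c_2(s)}}{x_2}\bigr)$ with $c_j(s):=\tfrac{x_j\lambda_1}{b_1}e^{\lambda_1 s}$, which after integrating against $e^{-(\lambda_1+\lambda_0)s}$ and using the by-parts form of $K$ recovers the same answer. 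The main obstacle is the coupled Laplace asymptotics of $h_i$ (and of $\sum_i h_i$) in the regime $i,\,e^{\lambda_1(t_N-s)}\to+\infty$ with fixed ratio, where the pointwise limit and a dominating function uniform in $N$ and integrable in $s$ have to be produced simultaneously; once this is done, the constant identification via integration by parts and the negligibility of $R$ via \eqref{eq:k2anc} are routine.
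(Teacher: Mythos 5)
Your argument is correct, but it reaches the asymptotics of $P(N,i_N)$ by a genuinely different route than the paper. The paper never analyses $h_{i_N}$ directly: it rewrites $h_i(e^{\lambda_1(t_N-s)})$ via the change of variables of Lemma~\ref{lemS1} as an integral over a time variable $u$, exchanges the $s$- and $u$-integrations (leading to \eqref{PNiN}), and applies dominated convergence to the $u$-integral, whose limit is literally the integral defining $K'(x)$, so no separate identification step is needed. You instead extract the Laplace-type limit $i_N^2h_{i_N}(\xi_N)\to\frac{b_1}{\lambda_1}(1+c(s))e^{-c(s)}$ by the substitution $y=1-v/i_N$, apply a second dominated convergence in $s$, and must then prove the identity $K'(x)=\frac{2b_1}{\lambda_1^2x^2}\int_0^\infty(1+c(s))e^{-c(s)}e^{-(\lambda_0+\lambda_1)s}\,ds$; I checked this integration-by-parts identity (equivalently $K(x)=-\frac{2b_1}{\lambda_1^2x}\int_0^\infty e^{-c(s)}e^{-(\lambda_0+\lambda_1)s}\,ds$), and it is correct, so your constant agrees with \eqref{eq:limtoshow}. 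Your treatment of $R(N,i_N)$ is essentially the paper's (the bound $h_i\le b_1/(\lambda_1 i(i+1))$ combined with \eqref{eq:k2anc}), just phrased with a cruder but sufficient estimate. For $(ii)$ the paper does not sum the $(i)$-asymptotics: it performs the geometric summation of $(\cdot)^{i-1}$ inside the integral representation and redoes the dominated convergence --- exactly your ``bypass'' variant --- thereby avoiding the uniformity in $i/N^{\lambda_1 t}$ that your Riemann-sum argument asserts without proof; if you keep the Riemann-sum route you should justify that uniformity, and for $x_2=\infty$ control both the $P$- and the $R$-tails via the summable bound $\sum_i 1/(i(i+1))$, since ``number of terms times uniform per-term bound'' fails there (your parenthetical remark about $\sum_i i^{-2}$ is the right fix, but it must also be applied to $\sum_i R(N,i)$). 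Finally, your computation yields $\int_{x_1}^{x_2}K'(x)\,dx=K(x_2)-K(x_1)$, which is the nonnegative quantity one must obtain; with $K$ as written in \eqref{def:K} one has $K\le 0$ and $K'\ge 0$, so the sign $K(x_1)-K(x_2)$ appearing in the statement of $(ii)$ is a slip in the paper, not an error in your argument.
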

The rest of the subsection is devoted to the proofs of these results.

\begin{proof}[Proof of~\eqref{eq:expSbi}, \eqref{eq:PNi}]
    To obtain our result, we sum on all trees started by sensitive initial cells and structured the sum using the number of \textit{ancestral resistant} cells that appeared in these trees, i.e. for all $i\in \N$ and all $N\in \N$,  
    \begin{equation*}
        \E\left[\Sb_i^N(t_N)\right]=\E\left[\sum_{j=1}^N\sum_{k=1}^\infty \one_{\A_k^j}\sum_{l=1}^k S^{(1),j,l}_i(t_N-T^{j,l,k}_N) \right],
    \end{equation*}
    where for all $j\in \N$ and $k\in\N$, $\A_k^{j,N}$ is the event that there is exactly $k$ \textit{ancestral resistant} cells in the $j^{th}$ tree and $(T^{j,1,k}_N,...,T^{j,k,k}_N)$ is the random vector of appearance times of the $k$ \textit{ancestral resistant} cells of the $j^{th}$ tree conditioned on $\A_k^{j,N}$. For all $k\in\N$, the sequence $(T^{j,1,k}_N,...,T^{j,k,k}_N)_{j\in\N}$ is a i.i.d. sequence of vectors and, without loss of generality, we assume that the law of these vectors are exchangeable. Finally, for all $i\in \N$, $(S^{(1),j,l}_i)_{j,l\in\N}$ is a sequence of i.i.d random functions that give the SFS of the cell population issued from one resistant cell and this sequence is independent from all other random variables of the model. Indeed, $\Sb^N_i $ counts the mutations that appeared in resistant cells only. From this previous consideration and the fact that the $N$ trees issued from the $N$ sensitive initial cells are i.i.d., we deduce
    \begin{align*}
        \E\left[\Sb_i^N(t_N)\right]
        &=N\sum_{k=1}^\infty \P(\A_k^{1,N})k \E\left[S^{(1),1,1}_i(t_N-T^{1,1,k}_N) |\A_k^{1,N} \right].
    \end{align*}
    In what follows, for simplicity, $T^{1,1,k}_N$ will be denoted by $T^k_N$, $\A_k^{1,N}$ by $\A_k^N$ and $S^{(1),1,1}_i$ by $S^{(1)}_i$ for all $k,i\in\N$. Thus, by denoting $f_X$ the distribution function of a r.v. $X$ that admits such function, we have
     \begin{align}
        \E\left[\Sb_i^N(t_N)\right]= &N\P(\A_1^N)\int_0^{t_N}E\left[ S^{(1)}_i(t_N-s) \right]f_{T^{1}_N}(s)ds +R(N,i)  \label{eq:Sbarint} \\
        &\text{with } R(N,i)=N\sum_{k=2}^\infty \P(\A_k^N)k \int_0^{t_N}\E\left[S^{(1)}_i(t_N-s) \right]f_{T^k_N}(s)ds. \label{eq:RNidef}
    \end{align}
    In particular, this corresponds to the splitting found in \eqref{eq:expSbi}.\\
    
    Then from Lemma~\ref{lemS1}, we deduce that, for all $t\geq 0$,
    \begin{equation}\label{eqres:SFS1}
    \mathbb{E}\left[S^{(1)}_i(t)\right]= \omega  e^{\lambda_1 t} \int_0^{ \frac{e^{\lambda_1 t}-1}{e^{\lambda_1 t}-d_1/b_1}} \frac{ 1-y}{ 1-\frac{d_1}{b_1}y}y^{i-1}dy=\omega  e^{\lambda_1 t} h_i(e^{\lambda_1 t}).
    \end{equation}
     Indeed, $S^{(1)}_i(t)$ corresponds to the SFS at time $t$ of a birth and death branching process with neutral mutations accumulated at each division, starting from a single individual and with birth rate $b_1$ and death rate $d_1$. Notice that it  does not depend on $N$. \\
     
     Moreover, recall that the law of $T^{1}_N$ is given by \eqref{eqfT} of Proposition~\ref{th:ancestralresistant}. Then we use \eqref{eqres:SFS1}, \eqref{eq:1anc} and \eqref{eqfT} into \eqref{eq:Sbarint} to deduce \eqref{eq:PNi}. 
     \end{proof}
     
     \begin{proof}[Proof of Lemma~\ref{lem:Sbieq}]
     Let us prove \eqref{eq:RNi}. To this aim, we use \eqref{eq:RNidef} and \eqref{eqres:SFS1} to deduce that for all $N,i\in\N$
     \begin{align*}
         R(N,i)&\leq  N\sum_{k=2}^\infty k\P(\A_k^N) \int_0^{\infty}\omega e^{\lambda_1(t_N-s)}\left(\int_0^1\frac{(1-y)y^{i-1}}{1-d_1y/b_1}dy\right)f_{T^k_N}(s)ds\\
         &\leq \omega \frac{b_1}{\lambda_1}N^{1+\lambda_1t}\sum_{k=2}^\infty k \P(\A_k^N),
     \end{align*} where the last inequality is due to the upper bounds $\int_0^1\frac{(1-y)y^{i-1}}{1-\frac{d_1}{b_1}y}dy \leq \frac{b_1}{\lambda_1} $ and $\int_0^{\infty}e^{-\lambda_1s}f_{T^k_N}(s)ds \leq 1$.
    The result~\eqref{eq:RNi} is then a consequence of Equation~\eqref{eq:k2anc} of Lemma~\ref{lem:probaanc}.\\

	We can now prove Lemma~\ref{lem:Sbieq}. In view of \eqref{eq:expSbi} and \eqref{eq:RNi},  it is sufficient to show that \begin{equation}\label{conv:delta}
	\frac{P(N,i)}{N^{1+\lambda_1t-\alpha}}\underset{N\to\infty}{\rightarrow}2b_0\omega \gamma \frac{I(i)}{\lambda_0+\lambda_1}.
	\end{equation}
  We know from \eqref{eq:PNi} that,
	$$\frac{P(N,i)}{N^{1+\lambda_1t-\alpha}}= \frac{\omega \gamma\delta_0(1-x_N) }{1-\gamma _N} \int_0^{\infty} g_{N}(s) \;ds, \text{ with } g_{N}(s):=\one_{s\leq t_N} h_i(e^{\lambda_1\,(t_N-s)}) e^{-(\lambda_1 + x_N \delta_0)\, s}.
	$$ 
	Then, for all $s\geq 0$, we have the following convergence
	\begin{equation*}
	    g_{N}(s)\underset{N\to\infty}{\rightarrow} I(i)e^{-(\lambda_0+\lambda_1)s}.
	\end{equation*} 
	Moreover for all $s\geq 0$ and $N\in \N$,
		$ |g_{N}(s)|\leq  I(i)e^{-\lambda_1 s}$,
	which defined an integrable function on $(0,\infty)$. We thus conclude with the Dominated convergence theorem  that $\int_0^{\infty} g_{N}(s) \;ds$ tends to $ I(i)/(\lambda_0+\lambda_1)$ when $N$ tends to $\infty$. Finally, according to \eqref{approx:xn}, we have that $\delta_0(1-x_N)/(1-\gamma_N)$ tends to $ 2b_0$ when $N$ tends to infinity. Combining the last two limits gives \eqref{conv:delta}, which ends the proof  of Lemma~\ref{lem:Sbieq}.
	\end{proof}

\begin{proof}[Proof of Lemma~\ref{lem:Sbxeq}]
Let us first prove~\eqref{eq:RNiN}. From~\eqref{eq:RNidef} and \eqref{eqres:SFS1}, we know that, with the notation of the previous proof, 
\begin{equation*}
R(N,i_N)=N\sum_{k=2}^\infty \P(\A_k^N)k \int_0^{t_N}\omega e^{\lambda_1(t_N-s)}h_{i_N}(e^{\lambda_1(t_N-s)})f_{T^k_N}(s)ds,
\end{equation*}
with $h_i(x)=\int_0^{\frac{x-1}{x-d_1/b_1}}\frac{1-y}{1-d_1y/b_1}y^{i-1}dy$. Since $\lambda_1/b_1\leq (1-d_1y/b_1)$ for all $y\in (0,1)$,
\begin{equation}\label{maj:hi}
	h_i(x)\leq \frac{b_1}{\lambda_1} \tilde h_i(x),
	\end{equation}
	with $\tilde h_i(x):=	\frac{1}{i}\left(\frac{x-1}{x-\frac{d_1}{b_1}}\right)^i-\frac{1}{i+1}\left(\frac{x-1}{x-\frac{d_1}{b_1}}\right)^{i+1} =\frac{1}{i(i+1)}\left(1-\frac{\frac{\lambda_1}{b_1}}{x-\frac{d_1}{b_1}}\right)^i\left[1+i\frac{\frac{\lambda_1}{b_1}}{x-\frac{d_1}{b_1}}\right].$
Thus 
\begin{equation*}
  \frac{R(N,i_N)}{N^{1-\lambda_1 t -2\alpha}}\leq \frac{b_1\omega  N^{2\lambda_1 t+2\alpha} }{\lambda_1 i_N (i_N+1)} \sum_{k=2}^\infty \P(\A_k^N)k \int_0^{\infty} \tilde{g}_N(s) f_{T^k_N}(s)ds,
\end{equation*}
with
\begin{align}
\tilde{g}_N(s)&:=\one_{\{s\in[0,t_N]\}}e^{-\lambda_1 s} \left(1-\frac{\frac{\lambda_1}{b_1}}{e^{\lambda_1(t_N-s)}-\frac{d_1}{b_1}}\right)^{i_N}\left[1+\frac{i_N\frac{\lambda_1}{b_1}}{e^{\lambda_1(t_N-s)}-\frac{d_1}{b_1}}\right] \label{eq:gN}\\
&\leq e^{-\lambda_1 s}\left[1+i_N\frac{\lambda_1}{b_1}\frac{e^{-\lambda_1t_N}e^{\lambda_1 s}}{1-\frac{d_1}{b_1}e^{-\lambda_1(t_N-s)}}\right] \one_{\{s\in[0,t_N]\}} \nonumber\\
&\leq 1+(x+2)\leq x+3,\nonumber
\end{align}
for $N$ sufficiently large, since $i_N=\lceil xe^{\lambda_1 t_N}\rceil +1$. Hence,
\begin{equation}\label{maj:RNiN}
  \frac{R(N,i_N)}{N^{1-\lambda_1 t -2\alpha}}\leq \frac{b_1\omega  N^{2\alpha} }{x^2 \lambda_1} \left(x+3\right) \sum_{k=2}^\infty \P(\A_k^N)k.
\end{equation}
We finally find~\eqref{eq:RNiN} by using the approximation given by \eqref{eq:k2anc} of Lemma~\ref{lem:probaanc}.\\

We then prove Lemma~\ref{lem:Sbxeq} $(i)$. We fix $x>0$ and recall that $i_N=\lceil xN^{\lambda_1 t}\rceil +1$. Recall the definition $P(N,i)$ given by~\eqref{eq:PNi}. In view of \eqref{eq:RNiN}, it is sufficient to prove that 
	\begin{equation}\label{eq:limtoshow}
	    \frac{P(N,i_N)}{N^{1-\lambda_1 t-\alpha}}\underset{N\to\infty}{\longrightarrow} b_0\gamma \omega \lambda_1 K'(x).
	\end{equation}
	According to the change of variable used in the proof of Lemma~\ref{lemS1}, we know that, for $s\in[0,t_N]$, 
 \begin{equation*}
     h_i(e^{\lambda_1 (t_N-s)}) = \frac{\lambda_1^2}{b_1} \int_s^{t_N} \frac{1}{(e^{\lambda_1 (t_N-u)}-\frac{d_1}{b_1})^2} \Big( \frac{e^{\lambda_1 (t_N-u)}-1}{e^{\lambda_1 (t_N-u)}-\frac{d_1}{b_1}}\Big)^{i-1} du.
 \end{equation*}
 Then, noticing that $$\int_0^u e^{-(\lambda_1+x_N\delta_0)s}ds = \frac{(1-e^{-(\lambda_1+x_N\delta_0)u})}{\lambda_1+x_N \delta_0},$$
 we deduce from \eqref{eq:PNi} by an integral exchange that,
 \begin{equation}\label{PNiN}
     P(N,i_N)=N^{1+\lambda_1t-\alpha} \frac{ \lambda_1^2 \delta_0(1-x_N)\gamma\omega }{b_1 (1-\gamma _N)(\lambda_1+x_N\delta_0)} \int_0^{+\infty} l_N(u) e^{-2\lambda_1 \, t_N} \;du,
 \end{equation}
 where
 $$l_N(u):=\one_{\{u\in[0,t_N]\}}\frac{(1-e^{-(\lambda_1+x_N\delta_0) u})}{(e^{-\lambda_1u}-\frac{d_1}{b_1}e^{-\lambda_1 t_N})^2} \Big( \frac{1-e^{-\lambda_1 (t_N-u)}}{1-\frac{d_1}{b_1}e^{-\lambda_1 (t_N-u)}}\Big)^{i_N-1}. $$
However, for $u\leq t_N$, we have,
\begin{align*}
    \Big( \frac{1-e^{-\lambda_1 (t_N-u)}}{1-\frac{d_1}{b_1}e^{-\lambda_1 (t_N-u)}}\Big)^{i_N-1}&=\exp\Big((i_N-1)\log\left(1 - \frac{\lambda_1}{b_1} \frac{e^{-\lambda_1(t_N-u)}}{1-\frac{d_1}{b_1}e^{-\lambda_1(t_N-u)}}\right)\Big)\\
    &\leq \exp\Big(-(i_N-1)\frac{\lambda_1}{b_1} e^{-\lambda_1(t_N-u)}\Big)
\end{align*} 
with $i_N=\lceil xe^{\lambda_1t_N}\rceil +1$, $x> 0$. Hence for all $u>0$, $$l_N(u)\leq \frac{(e^{\lambda_1 u})^2}{(1-\frac{d_1}{b_1}e^{\lambda_1 u})^2} \exp(-x\frac{\lambda_1}{b_1}e^{\lambda_1 u}).$$
Such upper bound defines an integrable function on $\R_+$.
Moreover, notice that, when $N$ tends to $+\infty$, $\delta_0(1-x_N)/(1-\gamma_N)$ tends to $ 2b_0$ and for all $u>0$, \begin{equation}\label{conv:lN}
    l_N(u) \underset{N\to\infty}{\to} (1-e^{-(\lambda_1+\lambda_0) u})e^{2\lambda_1u} \exp\left(-x\frac{\lambda_1}{b_1}e^{\lambda_1u}\right).
\end{equation}
Then we can deduce from Formula~\eqref{PNiN}, the convergence result \eqref{eq:limtoshow} by the dominated convergence theorem. That concludes the proof of Lemma~\ref{lem:Sbxeq} $(i)$.

	\medskip We end the proof by proving the second part of the Lemma~\ref{lem:Sbxeq} $(ii)$. According to~\eqref{eq:expSbi} and \eqref{def_Sbarx1x2},
	\begin{equation*}
	\E\left[\Sb_{x_1,x_2}^N(t)\right]=\sum_{i\in (x_1N^{\lambda_1 t},x_2N^{\lambda_1 t})} P(N,i)+\sum_{i\in (x_1N^{\lambda_1 t},x_2N^{\lambda_1 t})} R(N,i).
	\end{equation*}
	Then from \eqref{maj:RNiN}, we obtain the following upper bound
	\begin{align*}
	    \frac{1}{N^{1-2\alpha}}\sum_{i\in (x_1N^{\lambda_1 t},x_2N^{\lambda_1 t})} R(N,i)&\leq N^{-\lambda_1 t}\sum_{i\in (x_1N^{\lambda_1 t},x_2N^{\lambda_1 t})} \frac{b_1\omega (ie^{-\lambda_1 t_N}+2) N^{2\alpha} }{(ie^{-\lambda_1 t_N}+1)^2 \lambda_1}  \sum_{k=2}^\infty \P(\A_k^N)k\\
	    &\leq (x_2-x_1) \frac{b_1\omega (x_2+2)  }{(x_1+1)^2 \lambda_1} N^{2\alpha} \sum_{k=2}^\infty \P(\A_k^N)k,
	\end{align*}
	and from \eqref{eq:k2anc}, we deduce that 
	$$
	\limsup_{N\in\N} \frac{1}{N^{1-2\alpha}}\sum_{i\in (x_1N^{\lambda_1 t},x_2N^{\lambda_1 t})} R(N,i) <\infty.
	$$
	Then, noticing that $\frac{\delta_0(1-x_N)}{(1-\gamma_N)(\lambda_1+x_N\delta_0)}$ tends to $\frac{2b_0}{\lambda_1+\lambda_0}$ when $N$ tends to infinity (see Formula~\eqref{approx:xn}), we deduce from Formula~\eqref{PNiN} that Lemma~\ref{lem:Sbxeq} $(ii)$ will be proved as soon as we prove 
	\begin{equation*}\label{eq:limtoshow3}
	    \tilde{\Omega}_N:=\frac{\lambda_1}{b_1} N^{\lambda_1 t} \sum_{i_N\in (x_1N^{\lambda_1 t},x_2N^{\lambda_1 t})}\int_0^{+\infty} l_N(u) e^{-2\lambda_1 \, t_N} \;du, \underset{N\to\infty}{\longrightarrow} K(x_1)-K(x_2),
	\end{equation*}
 with,
 $$l_N(u)=\one_{\{u\in[0,t_N]\}}\frac{(1-e^{-(\lambda_1+x_N\delta_0) u})}{(e^{-\lambda_1u}-\frac{d_1}{b_1}e^{-\lambda_1 t_N})^2} \Big( \frac{1-e^{-\lambda_1 (t_N-u)}}{1-\frac{d_1}{b_1}e^{-\lambda_1 (t_N-u)}}\Big)^{i_N-1}. $$

	However, by calculus, we obtain
	\begin{align*}
	   \tilde{\Omega}_N&=\int_0^{t_N}\frac{1-e^{(-\lambda_1+\delta_0 x_N)u}}{e^{-\lambda_1 u}-\frac{d_1}{b_1}e^{-\lambda_1 t_N}} \bigg[\left(1-\frac{\lambda_1}{b_1}\frac{e^{-\lambda_1(t_N-u)}}{1-\frac{d_1}{b_1}e^{-\lambda_1(t_N-u)}}\right)^{\lfloor x_1N^{\lambda_1 t}\rfloor}\\
	   &\hspace{6cm}-\left(1-\frac{\lambda_1}{b_1}\frac{e^{-\lambda_1(t_N-u)}}{1-\frac{d_1}{b_1}e^{-\lambda_1(t_N-u)}}\right)^{\lfloor x_2N^{\lambda_1 t}\rfloor+1}\bigg]ds.
	\end{align*}
We conclude, arguing as previously, with the dominated convergence theorem.
\end{proof}

\subsection{Number of neutral mutations appeared in a sensitive cell}
\label{ssec:sfsS}

In this subsection, we deal with the second term that appears in the SFS, that is, $\Su_i^N(t_N)$, which counts the number of neutral mutations that appeared in a sensitive cell and that can be found in exactly $i$ resistant cells at time $t_N$. As previously, we will divide the quantity into two parts such that, for all $i\in \N^*$, $t>0$,
  \begin{equation}
       \label{eq:expSu}
       \E\left[\Su_i^N(t_N)\right]=Q(N,i) +\tilde{R}(N,i),
   \end{equation}
   where $Q(N,i)$ represents the weight provided by the trees that carry exactly one \textit{ancestral resistant} cell and can be written as
   \begin{align}
       &Q(N,i):=\frac{N\gamma_N(1-x_N)\delta_0\omega}{2(1-\gamma_N)}\int_0^{t_N}\kappa_i\left(e^{-\lambda_1 (t_N-s)}\right)(1+s\delta_0(1-x_N))e^{-s\delta_0x_N} \, ds \label{eq:QNi}\\
       &\text{with }\kappa_i(x):=\frac{\lambda_1^2}{b_1^2}\frac{x(1-x)^{i-1}}{(1-\frac{d_1}{b_1}x)^{i+1}};\label{def:hibis}
       \end{align}
       and where $\tilde{R}(N,i)$ represents the part provided by the trees that carry two or more \textit{ancestral resistant} cells. When $N\to\infty$, we will be able to prove once again that this part is negligible with respect to $Q(N,i_N)$.\\

Two results are deduced below using this decomposition. The first one deal with the case when $i$ is fixed. In this case, we prove that this part of the SFS is negligible compared with $\E[\Sb_i^N(t_N)]$, described in the previous section, and which is of order $N^{1+\lambda_1t-\alpha}$.
\begin{Lem}
\label{lem:Sui}
   For all $i\in \N^*$, $t>0$, when $N$ is large,
   \begin{equation}
       \label{eq:expSuiapp}
       \E\left[\Su_i^N(t_N)\right]=\underset{N\to\infty}{O}\left(N^{1-\alpha}\right).
   \end{equation}
\end{Lem}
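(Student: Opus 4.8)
The plan is to prove the bound by dropping the constraint ``exactly $i$'': $\Su_i^N(t_N)$ is at most the number $\mathcal M^N(t_N)$ of \emph{all} neutral mutations that appeared at a division of a sensitive cell and are carried by at least one resistant cell alive at $t_N$, so it suffices to show $\E[\mathcal M^N(t_N)]=O(N^{1-\alpha})$. The key structural fact is that, resistance being irreversible, every resistant cell alive at $t_N$ has a unique oldest resistant ancestor, which is an \textit{ancestral resistant} cell; and any $\Su$-type mutation carried by that resistant cell appeared at a division of one of the (necessarily sensitive) ancestors of that \textit{ancestral resistant} cell, hence lies on the lineage joining the root to it. Consequently, letting the sum range over all \textit{ancestral resistant} cells $a$ of the $N$ initial trees, with $G_a$ the generation of $a$ and $M_a$ the number of $\Su$-type mutations carried by $a$,
\begin{equation*}
\mathcal M^N(t_N)\le\sum_a M_a,\qquad \E[M_a\mid\mathcal T]=\tfrac{\omega}{2}\,G_a,
\end{equation*}
the identity holding because, conditionally on the genealogy $\mathcal T$, $M_a$ is the sum of the mutation batches accumulated along the $G_a$ edges of that lineage, each an independent copy of $N_\omega$ with mean $\omega/2$. (A mutation on a lineage shared by several \textit{ancestral resistant} cells is counted once per such cell, which only inflates the right-hand side, hence is harmless.)

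It then remains to estimate $\E[\sum_a G_a]=N\,\E[\sum_{a\in\mathcal T}G_a]$, the expectation now over a single tree $\mathcal T$ issued from one sensitive cell. Let $m_g$ and $r_g$ denote the expected numbers of, respectively, sensitive cells and \textit{ancestral resistant} cells at generation $g$ in $\mathcal T$. A one-step branching analysis — a sensitive cell divides with probability $b_0/\delta_0$ into two daughters that are independently resistant with probability $\gamma_N$, and dies otherwise — gives $m_0=1$, $r_0=0$ and, for $g\ge0$, $m_{g+1}=2p_N\,m_g$ and $r_{g+1}=\tfrac{2\gamma_N b_0}{\delta_0}\,m_g$, with $p_N$ as in \eqref{def:pNbetaN}. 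Hence $m_g=(2p_N)^g$ and $r_g=(2p_N)^{g-1}\,2\gamma_N b_0/\delta_0$ for $g\ge1$. Because $Z_0^N$ is subcritical and $\gamma_N>0$, $2p_N=2(1-\gamma_N)b_0/\delta_0<2b_0/(b_0+d_0)<1$; so, by Tonelli,
\begin{equation*}
\E\Big[\sum_{a\in\mathcal T}G_a\Big]=\sum_{g\ge1}g\,r_g=\frac{2\gamma_N b_0}{\delta_0}\sum_{g\ge1}g(2p_N)^{g-1}=\frac{2\gamma_N b_0}{\delta_0(1-2p_N)^2}<\infty.
\end{equation*}
Combining this with the previous display and $\gamma_N=\gamma/N^{\alpha}$,
\begin{equation*}
\E[\Su_i^N(t_N)]\le\E[\mathcal M^N(t_N)]\le\frac{\omega}{2}\,N\,\frac{2\gamma_N b_0}{\delta_0(1-2p_N)^2}=\frac{\omega b_0\gamma}{\delta_0(1-2p_N)^2}\,N^{1-\alpha},
\end{equation*}
and since $1-2p_N=(\lambda_0+2\gamma_N b_0)/\delta_0\to\lambda_0/\delta_0\in(0,1)$ the prefactor stays bounded, which proves \eqref{eq:expSuiapp} — uniformly in $i$, in fact.

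The analytic content here is light; the two points that need care are (i) the combinatorial claim that every $\Su$-type mutation present in a resistant cell at $t_N$ sits on the root-to-$a$ lineage of some \textit{ancestral resistant} cell $a$, so that the over-counting in $\sum_a M_a$ does not hurt the upper bound, and (ii) the first-moment recursion for $r_g$ together with the interchange of expectation and the series, where subcriticality ($2p_N<1$) is exactly what makes $\sum_{g}g(2p_N)^{g-1}$ summable.

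Alternatively, one can argue from the decomposition \eqref{eq:expSu}: bound $Q(N,i)$ directly via \eqref{eq:QNi} and \eqref{def:hibis}, using $1-\tfrac{d_1}{b_1}x\ge\tfrac{\lambda_1}{b_1}$ on $(0,1)$ (so $\kappa_i(x)\le(b_1/\lambda_1)^{i-1}$) and $\int_0^\infty(1+s\delta_0(1-x_N))e^{-s\delta_0x_N}\,ds=(\delta_0 x_N^2)^{-1}$, both bounded uniformly in $N$ for fixed $i$, whence $Q(N,i)=O(N\gamma_N)=O(N^{1-\alpha})$; and bound $\tilde R(N,i)$, the contribution of progenies carrying at least two \textit{ancestral resistant} cells, by the same lineage argument restricted to those progenies, which is $O(N^{1-2\alpha})$ by \eqref{eq:k2anc} of Lemma~\ref{lem:probaanc}. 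The first route is preferable since it does not need the explicit form of $\tilde R(N,i)$.
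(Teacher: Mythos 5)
Your main argument is correct, and it takes a genuinely different (and more elementary) route than the paper. The paper proves the lemma through the decomposition \eqref{eq:expSu}: it computes $Q(N,i)$ exactly via the conditional laws of $(G_N,T_N)$ from Proposition~\ref{th:ancestralresistant} and the probability $\kappa_i(e^{-\lambda_1 u})\le 1$, bounding the Gamma-mixture integral by $O(N^{1-\alpha})$, and it controls $\tilde R(N,i)$ by the lineage-counting bound \eqref{eq:majSuiN} combined with the Galton--Watson generating-function identities for $(v_{g,n})$ (Equation \eqref{eq:sum2}), yielding $O(N^{1-2\alpha})$. You instead bound $\E[\Su_i^N(t_N)]$ in one stroke, uniformly in $i$, by the expected total number of sensitive-origin mutations carried by \textit{ancestral resistant} cells, $\tfrac{\omega}{2}\,N\,\E[\sum_{a\in\mathcal T}G_a]$, and you evaluate the latter by a one-step first-moment recursion in the embedded Galton--Watson tree ($m_g=(2p_N)^g$, $r_g=\tfrac{2\gamma_N b_0}{\delta_0}(2p_N)^{g-1}$), using only subcriticality $2p_N<1$; your counting claims (unique \textit{ancestral resistant} ancestor, every $\Su$-type mutation present at $t_N$ sits on a root-to-$a$ lineage, over-counting only helps) are sound, and your conclusion $\E[\Su_i^N(t_N)]\le \omega b_0\gamma N^{1-\alpha}/(\delta_0(1-2p_N)^2)$ with $1-2p_N\to\lambda_0/\delta_0$ is exactly the claimed $O(N^{1-\alpha})$. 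What your route buys is simplicity: no conditioning on $\A_k^N$, no exchangeability assumption, no $(v_{g,n})$ machinery, and an explicit constant valid uniformly in $i$. What the paper's route buys is the exact expression \eqref{eq:QNi} for $Q(N,i)$, which is reused in the proof of Lemma~\ref{lem:expSuiNapp}, so its extra effort is not wasted in the broader argument. One small caveat on your closing alternative sketch: bounding $\tilde R(N,i)$ ``by the same lineage argument restricted to those progenies'' cannot be concluded from \eqref{eq:k2anc} alone, since one needs the generations of the \textit{ancestral resistant} cells in trees with at least two of them (this is precisely why the paper goes through the sums of $v_{g,n}$); your primary argument does not rely on this remark, so the proof stands.
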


When $i=i_N$ depends on the size of the population $N$, the result is more intricate. Contrary to the case studied in the previous section related to $\Sb_i$, we are not able to deal with the equivalent of $\E[\Su_{i_N}(t_N)]$, as we are not able to bound precisely $\tilde R(N,i_N)$. The main difficulty comes from the fact that the relationships between the \textit{ancestral resistant} cells have to be managed to deal with this quantity, which is currently beyond our reach. We however are able to derive a result for  $\E[\Su_{x_1,x_2}^N(t)]$, where $\Su^N_{x_1,x_2}(t)$ is the number of mutations (that appeared in a sensitive cell) carried by a number of resistant cells between $x_1e^{\lambda_1 t_N}$ and  $x_2e^{\lambda_2 t_N}$  at time $t$, 
\begin{equation}\label{def_Sux1x2}
\Su_{x_1,x_2}^N(t)=\sum_{i\in (x_1e^{\lambda_1 t_N},x_2e^{\lambda_1 t_N})} \Su^N_i(t).
\end{equation} 
Our result in this case is an exact asymptotic equivalent, contrary to the previous section.
Let us state it in the next lemma.
\begin{Lem}\label{lem:expSuiNapp}
 For all $t>0$, and for $x_1,x_2\in(0,\infty]$ with $x_1<x_2$,
   \begin{equation}
       \label{eq:expSuxapp}
      \E\left[\Su_{x_1,x_2}\big(t_N\big)\right] \underset{N\to\infty}{\sim} b_0\gamma\omega \lambda_1  \Big(L(x_1)-L(x_2)\Big) N^{1-\alpha} , 
       \end{equation}
       where $L$ is defined in \eqref{def:L}.
   
\end{Lem}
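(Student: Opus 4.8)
The starting point is the decomposition \eqref{eq:expSu}, which we sum over the relevant range of indices:
\begin{equation*}
\E\left[\Su_{x_1,x_2}^N(t_N)\right]=\sum_{i\in (x_1N^{\lambda_1 t},x_2N^{\lambda_1 t})} Q(N,i)+\sum_{i\in (x_1N^{\lambda_1 t},x_2N^{\lambda_1 t})} \tilde R(N,i),
\end{equation*}
recalling that $e^{\lambda_1 t_N}=N^{\lambda_1 t}$. The plan is to prove, on the one hand, that the $\tilde R$-sum is $\underset{N\to\infty}{O}(N^{1-2\alpha})$, hence negligible with respect to the announced order $N^{1-\alpha}$ since $\alpha>0$, and, on the other hand, that $N^{-(1-\alpha)}\sum_i Q(N,i)$ converges to $b_0\gamma\omega\lambda_1(L(x_1)-L(x_2))$; combining the two gives \eqref{eq:expSuxapp}. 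It may help to keep in mind the probabilistic meaning of \eqref{eq:QNi}: $\kappa_i(e^{-\lambda_1 u})$ is exactly the probability that a resistant birth and death process issued from one cell has size $i$ at time $u$, and $(1+s\delta_0(1-x_N))$ is $\E[G_N\mid T_N=s]$ (cf.\ Proposition~\ref{th:ancestralresistant}).

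For the first point I would bound $\sum_{i\geq 1}\tilde R(N,i)$ from above, which is legitimate since $\tilde R\geq 0$. By construction this sum is the expected number of neutral mutations that appeared in a sensitive cell, that are carried by at least one resistant cell alive at $t_N$, and that come from an initial cell whose progeny contains $k\geq 2$ \textit{ancestral resistant} cells. In such a progeny any such mutation lies on the sensitive path joining the root to one of the $k$ \textit{ancestral resistant} cells; hence the number of them is at most $\omega$ times the number of edges of the sensitive skeleton spanning the root and the $k$ \textit{ancestral resistant} cells, which is itself at most $\sum_{j=1}^k G_N^{(j)}$, where $G_N^{(j)}$ denotes the generation of the $j$-th \textit{ancestral resistant} cell. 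Since each $G_N^{(j)}$ has a geometric-type tail with parameter bounded away from $0$ uniformly in $N$ (the parameter $x_N$ of \eqref{def:pNbetaN} stays away from $0$), one checks that $\E[\sum_j G_N^{(j)}\mid\A_k^N]=\underset{N\to\infty}{O}(k)$ uniformly, so that, summing over the $N$ initial cells and over $k$,
\begin{equation*}
\sum_{i\geq 1}\tilde R(N,i)\leq C\,N\sum_{k\geq 2}k\,\P(\A_k^N)=\underset{N\to\infty}{O}(N\gamma_N^2)=\underset{N\to\infty}{O}(N^{1-2\alpha}),
\end{equation*}
by \eqref{eq:k2anc} of Lemma~\ref{lem:probaanc}. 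This is the exact analogue of the control of $R(N,i)$ carried out in the proofs of Lemmas~\ref{lem:Sbieq} and~\ref{lem:Sbxeq}.

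For the second point I would plug $\gamma_N=\gamma N^{-\alpha}$ and the definition \eqref{def:hibis} of $\kappa_i$ into \eqref{eq:QNi}, exchange the $i$-sum (finite, or geometrically convergent in $i$ when $x_2=+\infty$) with the $s$-integral, and perform the $i$-sum first. Writing $x:=e^{-\lambda_1(t_N-s)}=e^{-\lambda_1 t_N}e^{\lambda_1 s}$, one has $\frac{1-x}{1-\frac{d_1}{b_1}x}=1-\frac{\lambda_1}{b_1}x+\underset{x\to0}{O}(x^2)$, hence $\kappa_i(x)=\frac{\lambda_1^2}{b_1^2}\,x\,e^{-(i-1)\frac{\lambda_1}{b_1}x}\,(1+o(1))$ with an error controlled uniformly for $i$ of order $N^{\lambda_1 t}$, just as the function $l_N$ was controlled in the proof of Lemma~\ref{lem:Sbxeq}. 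After the substitution $i=yN^{\lambda_1 t}$ and using $N^{\lambda_1 t}x=e^{\lambda_1 s}$, the sum $\sum_{i\in(x_1N^{\lambda_1 t},x_2N^{\lambda_1 t})}\kappa_i(x)$ is a Riemann/geometric sum converging to $\frac{\lambda_1}{b_1}\big(e^{-x_1\frac{\lambda_1}{b_1}e^{\lambda_1 s}}-e^{-x_2\frac{\lambda_1}{b_1}e^{\lambda_1 s}}\big)$, read as $\frac{\lambda_1}{b_1}e^{-x_1\frac{\lambda_1}{b_1}e^{\lambda_1 s}}$ when $x_2=+\infty$. Using in addition $\frac{\delta_0(1-x_N)}{1-\gamma_N}\to 2b_0$, $\delta_0(1-x_N)\to 2b_0$ and $\delta_0 x_N\to\lambda_0$ from \eqref{approx:xn}, and dominating the resulting $s$-integrand by $C\,(1+s)\,e^{-\lambda_0 s}\,e^{-x_1\frac{\lambda_1}{b_1}e^{\lambda_1 s}}$ (integrable on $\R_+$ since $x_1>0$), the dominated convergence theorem gives
\begin{equation*}
\frac{1}{N^{1-\alpha}}\sum_{i\in (x_1N^{\lambda_1 t},x_2N^{\lambda_1 t})} Q(N,i)\ \underset{N\to\infty}{\longrightarrow}\ b_0\gamma\omega\,\frac{\lambda_1}{b_1}\int_0^{\infty}(1+2b_0 s)\,e^{-\lambda_0 s}\Big(e^{-x_1\frac{\lambda_1}{b_1}e^{\lambda_1 s}}-e^{-x_2\frac{\lambda_1}{b_1}e^{\lambda_1 s}}\Big)ds,
\end{equation*}
which equals $b_0\gamma\omega\lambda_1(L(x_1)-L(x_2))$ by the definition \eqref{def:L} of $L$. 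Together with the first point, this yields \eqref{eq:expSuxapp}.

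The main obstacle is the justification of the two interchanges in the second point, and above all the Riemann-sum approximation: one must show that replacing $\log\!\big(\frac{1-x}{1-\frac{d_1}{b_1}x}\big)$ by $-\frac{\lambda_1}{b_1}x$ produces an error that remains negligible once multiplied by $i\sim N^{\lambda_1 t}$ and exponentiated, and this uniformly in $s\in[0,t_N]$. This is where one uses that the dominant contribution comes from $s=\underset{N\to\infty}{O}(1)$, equivalently $xN^{\lambda_1 t}=\underset{N\to\infty}{O}(1)$; the estimate is essentially the one already performed for $l_N$ in the proof of Lemma~\ref{lem:Sbxeq}, so I expect it to carry over with the same arguments. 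A secondary, more bookkeeping-type point is to make the skeleton-length bound of the first point precise, i.e.\ to check that the conditional expectation of the number of edges of the sensitive skeleton spanning $k$ \textit{ancestral resistant} cells is $O(k)$ uniformly in $N$; this follows from the branching structure together with the geometric tail of Proposition~\ref{th:ancestralresistant} (and the companion statement Lemma~\ref{lem:TN_annexe}), but should be written out carefully.
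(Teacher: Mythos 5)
Your route is the paper's own: the decomposition $\E[\Su_{x_1,x_2}^N(t_N)]=\sum_i Q(N,i)+\sum_i\tilde R(N,i)$ from \eqref{eq:expSu}, the key remark that the $\tilde R$-part must be bounded globally rather than term by term (each sensitive-origin mutation is counted in exactly one $\Su^N_i$, so the bound \eqref{eq:majSuiN} applies directly to the sum over $i$), and, for the $Q$-part, summation in $i$ followed by dominated convergence with the limits $\delta_0 x_N\to\lambda_0$, $\delta_0(1-x_N)/(1-\gamma_N)\to2b_0$, yielding exactly $b_0\gamma\omega\lambda_1(L(x_1)-L(x_2))$. Your worry about the uniform replacement of $\log\big(\tfrac{1-x}{1-\frac{d_1}{b_1}x}\big)$ by $-\tfrac{\lambda_1}{b_1}x$ can be sidestepped as the paper does: sum the geometric series in $i$ in closed form first (as in \eqref{eq:intersumQNi}); then only pointwise convergence of the resulting integrand plus a domination of the form $C(1+\delta_0 s)e^{-\lambda_0 s}$ (using $\delta_0x_N\geq\lambda_0$) is needed, so no uniform-in-$s$ expansion has to be justified.

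The one genuine gap is in your control of $\sum_i\tilde R(N,i)$. You invoke the geometric law of Proposition~\ref{th:ancestralresistant} to claim $\E\big[\sum_{j\leq k}G_N^{(j)}\mid\A_k^N\big]=O(k)$ uniformly in $k$ and $N$; but that geometric law (parameter $x_N$) is the law of the generation conditioned on \emph{exactly one} ancestral resistant cell, and Lemma~\ref{lem:TN_annexe} concerns conditioning on \emph{at least one}. Under $\A_k^N$ the tree is conditioned to carry $k$ marks, hence to be large (of order $k$ leaves since the leaf-number law has geometric tail and the marking probability $\beta_N$ is small), and conditionally on its leaf number the binary tree shape is uniform, so the generation of a random marked leaf is typically of order $\sqrt{k}$; the claimed uniform $O(k)$ bound therefore does not follow from what you cite and fails for large $k$ (one expects order $k^{3/2}$). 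Your final estimate nevertheless survives, because $\P(\A_k^N)$ decays geometrically in $k$ with base of order $\beta_N$, so any sub-exponential growth in $k$ of the conditional generations still gives $\sum_{k\geq2}k\P(\A_k^N)\E[G^{(k),1}]=O(\gamma_N^2)$ — but this is precisely the step that must be argued. The paper avoids it by an exact computation: in the proof of Lemma~\ref{lem:Sui}, $\sum_{k\geq2}k\P(\A_k^N)\E[G^{(k),1}]$ is evaluated through the generating-function identities for $(v_{g,n})$, giving \eqref{ineq:tildeRNi} of order $N\gamma_N^2$, and the present lemma simply reuses that bound since \eqref{eq:majSuiN} does not depend on $i$. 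Replacing your heuristic by a citation of \eqref{ineq:tildeRNi} (or by a proved, $k$-explicit bound on $\E[G^{(k),1}\mid\A_k^N]$ combined with the geometric decay of $\P(\A_k^N)$) closes the gap.
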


The end of the section is devoted to the proof of the previous lemmas.

\begin{proof}[Proof of~\eqref{eq:expSu} and~\eqref{eq:QNi}]
As indicated in the introduction of this section, we divide the quantity into two parts, the first one incorporates progeny that include exactly one \textit{ancestral resistant} cells, the second those that include two or more \textit{ancestral resistant} cells: for all $i\in\N$, $t>0$,
\begin{equation*}
   \E\left[\Su_i^N(t_N)\right]=N \sum_{k=1}^\infty \P(\A_k^N)\E\left[\Su_i^{(k)}(t_N)|\A_k^N\right],
\end{equation*}
where $\Su_i^{(k)}$ is the number of neutral mutations that appeared in a sensitive cell whose first ancestor has exactly $k$ \textit{ancestral resistant} cells in its progeny. Obviously, to obtain~\eqref{eq:expSu} we set 
\begin{equation}\label{def:QNRtN}
    Q(N,i):=N  \P(\A_1^N)\E\left[\Su_i^{(1)}(t_N)|\A_1^N\right] \quad \text{and} \quad \tilde{R}(N,i):=N \sum_{k=2}^\infty \P(\A_k^N)\E\left[\Su_i^{(k)}(t_N)|\A_k^N\right].
\end{equation}
Let us now find a precise expression for $Q$. We denote by $(N_\omega^j)_{j\in\N}$ a i.i.d. sequence of r.v. with the same law as $N_\omega$, and by $\tilde{Z}$ a branching birth and death process with birth rate $b_1$ and death rate $d_1$ starting from $1$ individual. Then using~\eqref{eq:1anc} and the notation of Proposition~\ref{th:ancestralresistant}, we find
\begin{align}
     Q(N,i)&=  \frac{N\gamma_N(1-x_N)}{x_N(1-\gamma_N)}\E\left[\sum_{j=1}^{G_N}N_\omega^{j}\one_{\{\tilde{Z}(t_N-T_N)=i\}}\right],\nonumber\\
     &=\frac{N\gamma_N(1-x_N)\omega}{2x_N(1-\gamma_N)}\E\left[G_N\E[\one_{\{\tilde{Z}(t_N-T_N)=i\}}|G_N]\right].\label{eq:equaQNi}
\end{align}
The law of $T_N|\{G_N=g\}$ is a Gamma law with parameters $(\delta_0,g)$, and from Formula (7.3) Chap 5 of \cite{Harris1963}, we know that for all $u\geq 0$, $i\geq 1$,
\begin{equation}\label{eq:probaZi}
\P\left(\tilde{Z}(u)=i\right)=\frac{\lambda_1^2e^{-\lambda_1 u}(1-e^{-\lambda_1 u})^{i-1}}{b_1^2(1-\frac{d_1}{b_1}e^{-\lambda_1 u})^{i+1}}=\kappa_i(e^{-\lambda_1 u}).
\end{equation}
In addition with~\eqref{eq:equaQNi}, we find
\begin{align*}
    Q(N,i)&=\frac{N\gamma_N(1-x_N)\omega}{2x_N(1-\gamma_N)}\sum_{g=1}^\infty g\P(G_N=g)\int_0^{t_N}\kappa_i\left(e^{-\lambda_1 (t_N-s)}\right)\frac{s^{g-1}e^{-\delta_0 s}}{(g-1)!}\delta_0^g \, ds\\
    &=\frac{N\gamma_N(1-x_N)\omega}{2(1-\gamma_N)}\int_0^{t_N}\kappa_i\left(e^{-\lambda_1 (t_N-s)}\right)e^{-\delta_0 s}\sum_{g=1}^\infty g(1-x_N)^{g-1}\frac{s^{g-1}\delta_0^g}{(g-1)!} \, ds.
\end{align*}
By noticing that for all $z\in\R$
$$
\sum_{g=1}^\infty g\frac{z^{g-1}}{(g-1)!}=\sum_{g=2}^\infty \frac{z^{g-1}}{(g-2)!}+\sum_{g=1}^\infty \frac{z^{g-1}}{(g-1)!}=(z+1)e^{z},
$$
we obtain equation~\eqref{eq:QNi}.
\end{proof}

\begin{proof}[Proof of Lemma~\ref{lem:Sui}]
Let us first deal with the term $\tilde{R}(N,i)$. Recalling~\eqref{def:QNRtN}, we have that
\begin{align}
   \tilde{R}(N,i)   &\leq N \sum_{k=2}^\infty \P(\A_k^N)k\E[G^{(k),1}]\E[N_w],\label{eq:majSuiN}
\end{align}
where $(G^{(k),1},...,G^{(k),k})$ is the vector of the generations of the $k$ \textit{ancestral resistant} cells of a progeny, knowing that this progeny contains exactly $k$ \textit{ancestral resistant} cells. We assume that the law of this vector is exchangeable without loss of generality. The r.h.s of~\eqref{eq:majSuiN} is obtained by considering that all neutral mutations appeared in the ancestors of an \textit{ancestral resistant} cell (in each progeny containing exactly $k$ \textit{ancestral resistant} cells) count in $\Su_i^{(k)}(t)$. Then using the notation of Section~\ref{sec:AR} and arguing as in \eqref{eq:HcondR1} and Lemma~\ref{lem:probaanc}, we have for all $k\geq 2$,
\begin{align*}
    \P(\A_k^N)\E[G^{(k),1}]&=\P(\RR_k^{\T_N}\cap\EE^{\T_N}|\EE^{\T_N})\sum_{g\geq 1}g\P\left(G^{\T_N}=g+1|\RR_k^{\T_N},\EE^{\T_N}\right)\\
    &=\frac{1}{1-\gamma_N}\sum_{g\geq 1}g\P\left(\{G^{\T_N}=g+1\} \cap \RR_k^{\T_N}\right)\\
    &=\frac{1}{1-\gamma_N}\sum_{g\geq 1}g\sum_{n\geq k}^{\infty}\binom{n}{k}\beta_N^k(1-\beta_N)^{n-k}v_{g+1,n},
\end{align*}
In addition with~\eqref{eq:majSuiN}, we find
\begin{align*}
   \tilde{R}(N,i)&\leq\frac{N\omega}{2(1-\gamma_N)}    \sum_{g\geq 1}g\sum_{n\geq 2}^{\infty} \sum_{k=2}^n k\binom{n}{k}\beta_N^k(1-\beta_N)^{n-k}v_{g+1,n},\\
    &\leq \frac{N\omega}{2(1-\gamma_N)}    \sum_{g\geq 1}g\beta_N\sum_{n\geq 2}^{\infty} (n v_{g+1,n} -n(1-\beta_N)^{n-1}v_{g+1,n}),\\
     &\leq \frac{N\omega\beta_N(1-p_N)}{2(1-\gamma_N)}    \sum_{g\geq 1}g((2p_N)^{g}-(1-x_N)^g).
\end{align*}
where the last inequality is a consequence of~\eqref{eq:sum2}. Then, noticing by \eqref{def:pNbetaN} that $\gamma_N=\beta_N(1-p_N)$,
\begin{align}
   \tilde{R}(N,i)&\leq \frac{N\omega\gamma_N}{2(1-\gamma_N)}   \left(\frac{2p_N}{1-2p_N} -\frac{1-x_N}{x_N} \right)\nonumber\\
  & \leq \frac{N\omega\gamma_N}{2(1-\gamma_N)(1-2p_N)x_N}   \left(x_N+2p_N-1\right)\nonumber\\
  &\leq \frac{N\omega\gamma_N^2}{2(1-\gamma_N)(1-2p_N)x_N} \frac{2b_0}{\delta_0}   \left(\frac{2d_0}{\lambda_0}-1\right) +\underset{N\to\infty}{o}(N^{1-2\alpha}),\label{ineq:tildeRNi}
\end{align}
according to \eqref{approx:xn}.

Let us now deal with the first term $Q(N,i)$. According to \eqref{eq:probaZi}, $\kappa_i(e^{-\lambda_1u})$ corresponds to a probability, then
\begin{align}
    Q(N,i)&\leq\frac{N\gamma_N\delta_0\omega}{2(1-\gamma_N)}\int_0^{\infty}(1+s\delta_0)e^{-s\delta_0x_N} \, ds\nonumber\\
    &\leq  \frac{N\gamma_N\delta_0\omega}{2(1-\gamma_N)}\left[\frac{1}{\delta_0x_N}+\frac{1}{\delta_0x_N^2}\right]=\underset{N\to\infty}{O}(N^{1-\alpha}).\label{ineq:QNi}
\end{align}
Equations~\eqref{ineq:tildeRNi} and \eqref{ineq:QNi} are sufficient to obtain \eqref{eq:expSuiapp} and to conclude Lemma~\ref{lem:Sui}.
\end{proof}

\begin{proof}[Proof of Lemma~\ref{lem:expSuiNapp}]
Let set $0<x_1<x_2$, then 
\begin{equation}\label{eq:decSu}
\E\left[\Su_{x_1,x_2}^N(t)\right]=\sum_{i\in (x_1N^{\lambda_1 t},x_2N^{\lambda_1 t})} Q(N,i)+\sum_{i\in (x_1N^{\lambda_1 t},x_2N^{\lambda_1 t})}\tilde{R}(N,i).
\end{equation}
Let us first deal with the last term of the r.h.s.. Note that each mutation is counted in only one of the elements of the sequence $(\Su_i^N)_{i\geq 1}$, corresponding to the exact number $i$ of resistant offspring from the cell in which this mutation occurred. Thus, the bound~\eqref{eq:majSuiN} is still valid for $\sum_{i\in (x_1N^{\lambda_1 t},x_2N^{\lambda_1 t})}\tilde{R}(N,i)$ as we bounded by adding all neutral mutations that appeared in the ancestors of all \textit{ancestral resistant} cells (considering progeny containing
at least $2$ \textit{ancestral resistant} cells). Thus, using arguments similar to those used to obtain~\eqref{ineq:tildeRNi}, we conclude that
\begin{equation}\label{ineq:sumtRNi}
   \sum_{i\in (x_1N^{\lambda_1 t},x_2N^{\lambda_1 t})}\tilde{R}(N,i)\leq \underset{N\to\infty}{O}(N^{1-2\alpha}).
\end{equation}

We finally deal with the first term of the r.h.s of \eqref{eq:decSu}. Using the definition of $Q(N,i)$ in \eqref{eq:QNi} and the fact that, for all $z< 1$, $$\sum_{i=k_1+1}^{k_2}z^{i}=\frac{z^{k_1}-z^{k_2}}{1-z},
$$ 
and that 
$$
\left(1-\frac{1-e^{-\lambda_1(t_N-s)}}{1-\frac{d_1}{b_1}e^{-\lambda_1(t_N-s)}}\right)^{-1}=\frac{1-\frac{d_1}{b_1}e^{-\lambda_1(t_N-s)}}{\frac{\lambda_1}{b_1}e^{-\lambda_1(t_N-s)}}
$$
we find
\begin{align}
   &\frac{1}{N^{1-\alpha}}\sum_{i= \lfloor x_1N^{\lambda_1 t}\rfloor +1}^{\lfloor x_2N^{\lambda_1 t}\rfloor} Q(N,i)\nonumber\\
   &= \frac{\gamma(1-x_N)\delta_0\omega\lambda_1^2}{2(1-\gamma_N)b_1^2}\int_0^{t_N}\frac{(1+s\delta_0(1-x_N))e^{-s\delta_0x_N-\lambda_1(t_N-s)}}{(1-\frac{d_1}{b_1}e^{-\lambda_1(t_N-s)})^2}\sum_{i= \lfloor x_1N^{\lambda_1 t}\rfloor +1}^{\lfloor x_2N^{\lambda_1 t}\rfloor}
   \left(\frac{1-e^{-\lambda_1(t_N-s)}}{1-\frac{d_1}{b_1}e^{-\lambda_1(t_N-s)}}\right)^{i-1}
   \, ds \nonumber\\
   &= \frac{\gamma(1-x_N)\delta_0\omega\lambda_1}{2(1-\gamma_N)b_1}\int_0^{t_N}\frac{(1+s\delta_0(1-x_N))e^{-s\delta_0x_N}}{1-\frac{d_1}{b_1}e^{-\lambda_1(t_N-s)}}\Bigg[\left(\frac{1-e^{-\lambda_1(t_N-s)}}{1-\frac{d_1}{b_1}e^{-\lambda_1(t_N-s)}}\right)^{\lfloor x_1N^{\lambda_1 t}\rfloor} \nonumber\\
   &\hspace{8.5cm}-\left(\frac{1-e^{-\lambda_1(t_N-s)}}{1-\frac{d_1}{b_1}e^{-\lambda_1(t_N-s)}}\right)^{\lfloor x_2N^{\lambda_1 t}\rfloor}\Bigg] \, ds. \label{eq:intersumQNi}
\end{align}
We now prove that the r.h.s converges, when $N$ increases, to $+\infty$. According to~\eqref{approx:xn}, 
\begin{equation}\label{conv:constant}
    \frac{\gamma(1-x_N)\delta_0\omega\lambda_1}{2(1-\gamma_N)b_1} \underset{N\to\infty}{\longrightarrow}  \frac{\gamma b_0\omega\lambda_1}{b_1}.
\end{equation}
Let define
\begin{align*}
\hat{g}_N(s):=\one_{\{s\in(0,t_N)\}}\frac{(1+s\delta_0(1-x_N))e^{-s\delta_0x_N}}{1-\frac{d_1}{b_1}e^{-\lambda_1(t_N-s)}}\Bigg[&\left(\frac{1-e^{-\lambda_1(t_N-s)}}{1-\frac{d_1}{b_1}e^{-\lambda_1(t_N-s)}}\right)^{\lfloor x_1N^{\lambda_1 t}\rfloor} \\
&-\left(\frac{1-e^{-\lambda_1(t_N-s)}}{1-\frac{d_1}{b_1}e^{-\lambda_1(t_N-s)}}\right)^{\lfloor x_2N^{\lambda_1 t}\rfloor}\Bigg].
\end{align*}
Using \eqref{approx:xn}, we prove that, for $N$ large enough,
\[
\text{ for all } s\in\R^+, \quad \hat{g}_N(s) \leq \frac{(1+s\delta_0)e^{-s\lambda_0}}{1-d_1/b_1},
\]
whose r.h.s. can be integrated on $(0,\infty)$ w.r.t $s$, and arguing as in \eqref{eq:gN}, we have that for all $s\in\R^+$
\[
\hat{g}_N(s)\underset{N\to\infty}{\longrightarrow} (1+2b_0s)e^{-\lambda_0 s}\left[e^{-\frac{\lambda_1}{b_1}x_1e^{\lambda_1s}}-e^{-\frac{\lambda_1}{b_1}x_2e^{\lambda_1s}}\right].
\]
We conclude with the dominated convergence theorem, \eqref{conv:constant} and \eqref{eq:intersumQNi}, that
\begin{equation*}
    \frac{1}{N^{1-\alpha}}\sum_{i= \lfloor x_1N^{\lambda_1 t}\rfloor +1}^{\lfloor x_2N^{\lambda_1 t}\rfloor} Q(N,i) \underset{N\to\infty}{\longrightarrow}  \frac{\gamma b_0\omega\lambda_1}{b_1} \int_0^\infty (1+2b_0s)e^{-\lambda_0 s}\left[e^{-\frac{\lambda_1}{b_1}x_1e^{\lambda_1s}}-e^{-\frac{\lambda_1}{b_1}x_2e^{\lambda_1s}}\right]\, ds.
\end{equation*}
Recalling \eqref{eq:decSu} and \eqref{ineq:sumtRNi}, this ends the proof of Lemma~\ref{lem:expSuiNapp}.
\end{proof}

\section{Illustration and Discussion}
\label{sec:illus}

In this section, we illustrate and discuss the two main theorems, stated in Section~\ref{sec:model} and the four lemmas, presented in Section~\ref{sec:SFS}. In the following, we approximate by simulation the quantities for $N=500$, $t_N=\log(N)/\lambda_0$ and using the following set of parameters (called reference parameters set below),
\begin{center}
     $b_0=1.2$, $d_0=2.0$, $b_1=1.2$, $d_1=0.5$, $\omega=2.0$, $\alpha=0.9$ and $\gamma=1.0$.
\end{center}

As seen in Section~\ref{sec:SFS}, the SFS of a rescued population can be computed by counting two types of mutations separately: (1)the number of mutations that appear during sensitive cell division $\Su^N_i(t_N)$ and (2)those of resistant cell division $ \Sb^N_i(t_N)$.\\

Let us first deal with the case of mutations carried by a small number of cells (see Lemmas~\ref{lem:Sbieq} and \ref{lem:Sui}). In such case, the expected number of mutations is well approximated by the expected number of mutations appeared in resistant cells, i.e.
$$ \E\left[S^N_i(t_N)\right] \underset{N\to +\infty}{\sim} \E\left[\Sb^N_i(t_N)\right]. 
$$
We illustrate this equivalence relation in Figure~\ref{fig:Si} where the orange bullets and the red bars correspond to the empirical expectation over $50 \,000$ realizations, respectively, of $S^N_i(t_N)$ and $\Sb^N_i(t_N)$, for $i\in [0,20]$ (subfigure a) and $i\in [21,121]$ (subfigure b). The blue crosses correspond to their theoretical approximation given by Theorem~\ref{mainth} and Lemma~\ref{lem:Sbieq}, respectively. We observe that, for small values of $i$, the empirical expectations of $ S^N_i(t_N)$ and $\Sb^N_i(t_N)$ correspond, as do their theoretical approximations.
\begin{figure}[ht!]
  \centering
  \subcaptionbox{\centering $i\in [1,20]$.}{%
\includegraphics[width=0.9\textwidth]{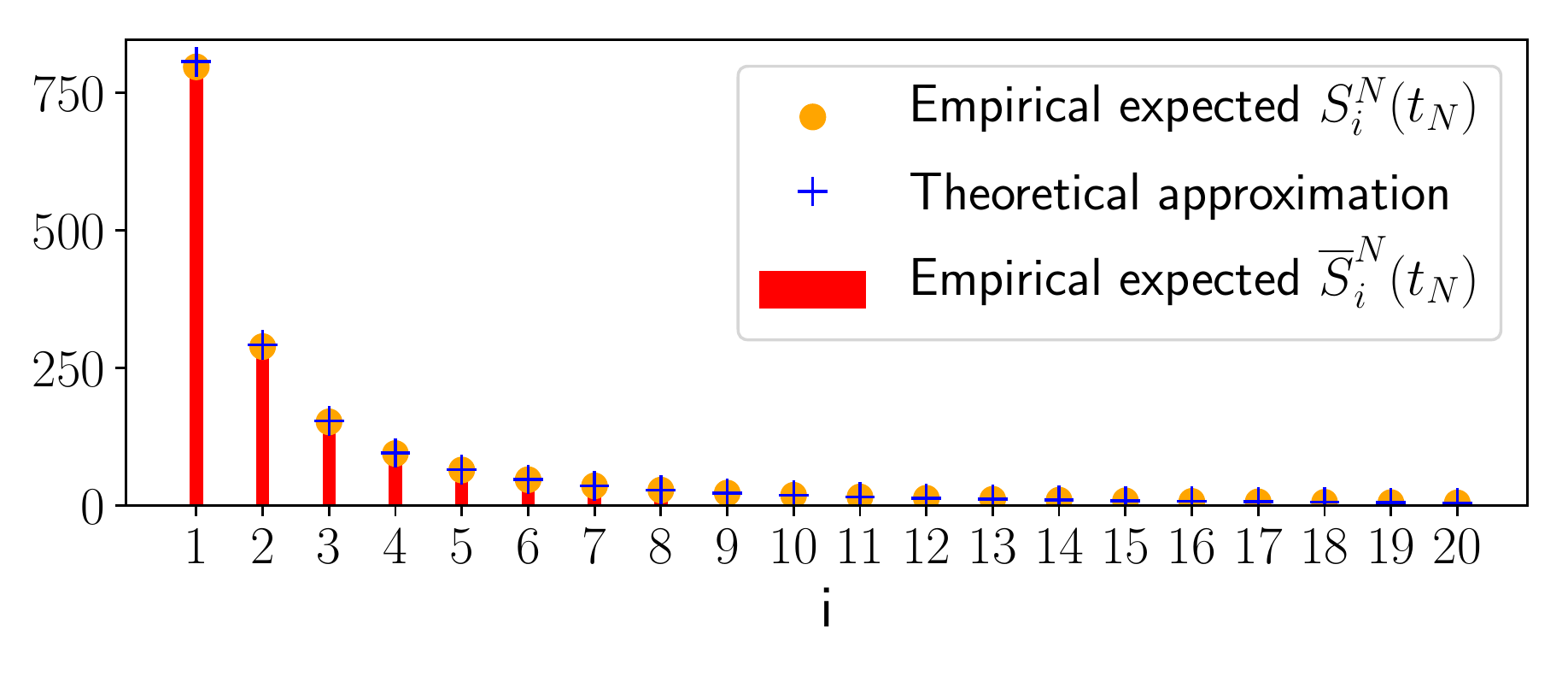}}\\
  \subcaptionbox{\centering $i\in [21,121]$.}{%
\includegraphics[width=0.9\textwidth]{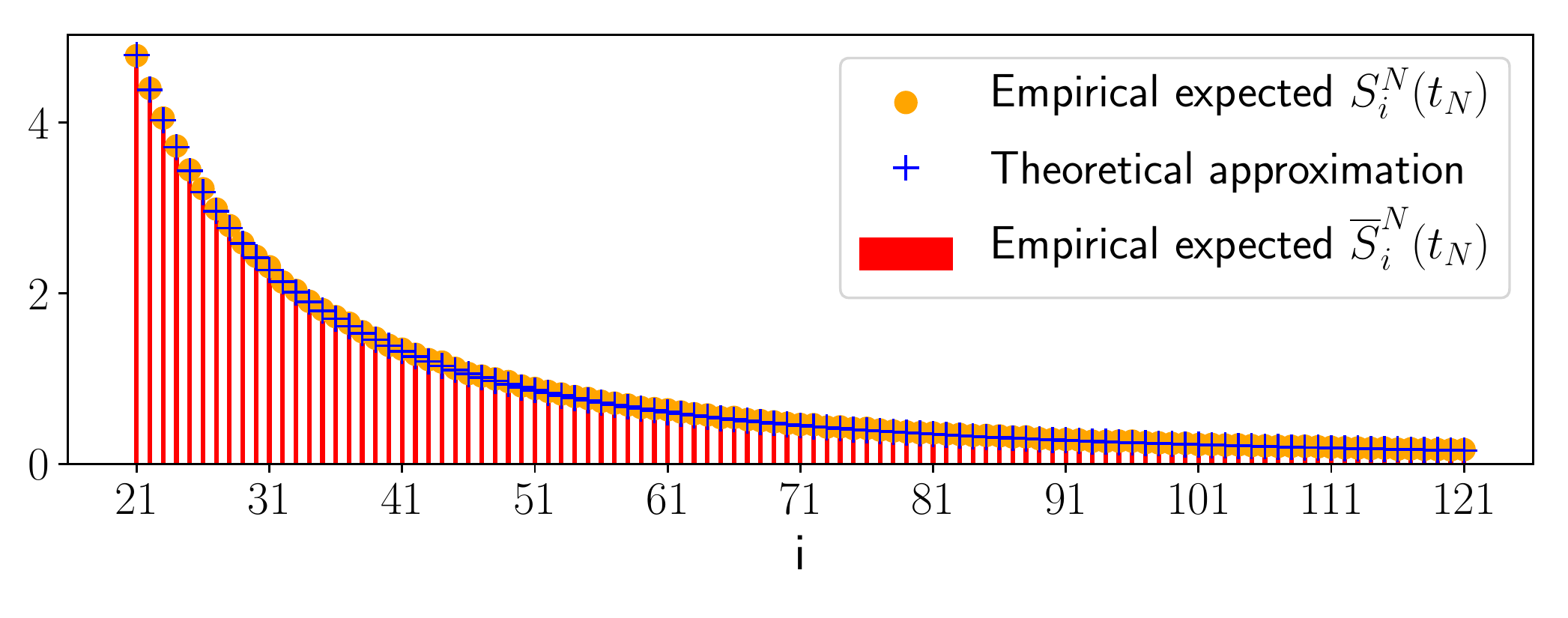}}
  \caption{Empirical and theoretical expectation of $S^N_i(t_N)$ and $\Sb^N_i(t_N)$ for small value of $i$, i.e $i\in [1,20]$ in (a) and $i\in [21,121]$ in (b). The orange bullets and the red bars correspond to the empirical expectation over $50 \,000$ realizations, respectively, of $S^N_i(t_N)$ and $\Sb^N_i(t_N)$. The blue crosses correspond to the theoretical approximation given by Theorem~\ref{mainth} and Lemma~\ref{lem:Sbieq}.}\label{fig:Si}
\end{figure}

Approximation given by Theorem~\ref{mainth} can be seen as the contribution of each \textit{ancestral resistant} cell, appeared at a random times of law given in Proposition~\ref{th:ancestralresistant} (ii), multiplied by the total number of \textit{ancestral resistant} cells that appeared before $t_N$,
\begin{equation}\label{eq:decomposition}
    \E\left[\Sb^N_i\big(t_N\big)\right] \underset{N\to \infty}{\sim} \underbrace{\frac{2b_0\gamma }{\lambda_0} N^{1-\alpha} }_{\substack{\text{number of}\\ \textit{ancestral}\\ \textit{resistant cells}}} 
    \underbrace{\frac{\omega\lambda_0}{\lambda_1+\lambda_0} I(i)N^{\lambda_1 t}}_{\substack{\text{contribution of a cell}}}.
\end{equation}
Indeed, in the one hand, the expected number of \textit{ancestral resistant} cells, $\E[N^N_R]$, is given by Lemma~\ref{lem:nbrresistant} and its asymptotic equivalent, when $N$ is large, corresponds to the underlined formula with the indication "number of \textit{ancestral resistant} cells" of Equation~\eqref{eq:decomposition}.  Notice that this approximation when $N$ is large is the same as the approximation of $N\P\left(\A_1^N\right)$ (see Formula~\eqref{eq:1anc}), which represents the mean number of \textit{ancestral resistant} cells unrelated to another \textit{ancestral resistant} cell.

On the other hand, recall that the time of occurrence of an \textit{ancestral resistant} cell conditioned on belonging to a progeny that carried exactly one \textit{ancestral resistant} cell follows an exponential law of parameter $\delta_0\, x_N$ (cf Proposition~\ref{th:ancestralresistant}). Then notice that $\delta_0 \, x_N$ tends to $\lambda_0$ when N tends to infinity. Moreover, when $i$ is fixed and $N$ is large, $\E \left[S_i^N(t_N-s)\vert Z^N(s)=(0,1) \right]$ can be approximated by $\omega I(i) e^{\lambda_1 (t_N-s)}$. Hence the second term of the product \eqref{eq:decomposition} can be interpreted as the SFS associated with the process generated by one \textit{ancestral resistant} cell appeared at an exponential time of parameter~$\lambda_0$, since
\begin{align*}
\int_0^{t_N} \E \left[S_i^N(t_N-s)\vert Z^N(s)=(0,1) \right] \lambda_0 e^{-\lambda_0 s} \, ds &\underset{N\to\infty}{\sim}
\int_0^{t_N}\omega I(i) e^{\lambda_1 (t_N-s)} \lambda_0 e^{-\lambda_0 s} \, ds \\
&\underset{N\to\infty}{\sim}
\omega \frac{\lambda_0}{\lambda_1+\lambda_0} I(i)N^{\lambda_1 t}.\\
\end{align*}

In conclusion, in this asymptotic case, the shape of the SFS with respect to $i$ is not impacted by the rescue dynamics. However, the expected SFS is impacted by the time it takes for the  \textit{ancestral resistant} cells to appear, although the SFS is studied at an asymptotically long time $t_N$.

The fraction $ \frac{\lambda_0}{\lambda_1+\lambda_0} $ can be interpreted as a loss coefficient due to the rescued dynamics. When $\lambda_0$ is large, the process $Z^N_0$ is extinct quickly, so the exponential time is close to $0$ and the loss coefficient is close to $1$. When $\lambda_1$ is large, even if the exponential time is small, starting the birth and death process induced by an \textit{ancestral resistant} cell from this time and not from $t=0$  is a huge disadvantage, so the loss coefficient is close to $0$.

The size order of $\E[\,\Sb^N_i\big(t_N\big)]$ is given by the size order of the resistant population number at time $t_N$ which is $N^{\lambda_1 t}$. The mutation number due to sensitive division can not reach such size order. This is why, it is negligible in such asymptotic case.\\

\medskip
Let us now illustrate and discuss the results on the number of mutations carried by a large number of resistant cells, i.e. for $i$ depending on $N$. Remind that in this case, we assumed that $i_N$ is proportional to the size order of the resistant population number at time $t_N$, i.e $i_N \,{\sim} \,e^{\lambda_1 t_N}$ when $N\to\infty$. As in Formula~\eqref{eq:decomposition}, the approximation of $\E[\Sb_{i_N}^N(t_N)]$, given by Lemma~\ref{lem:Sbxeq}(i), can be seen as the expected number of \textit{ancestral resistant} cells multiplied by the contribution of one \textit{ancestral resistant} cell, 

\begin{equation}\label{eq:interSb}
b_0\gamma \omega \lambda_1 \, K'(x)N^{1-\lambda_1 t -\alpha}= \underbrace{\frac{2b_0\gamma }{\lambda_0} N^{1-\alpha} }_{\substack{\text{number of}\\ \textit{ancestral}\\ \textit{resistant cells}}}  \int_0^\infty \underbrace{ \left(\int_0^u \omega b_1 e^{\lambda_1(u-s)} \lambda_0 e^{-\lambda_0 s}ds\right)}_{\substack{\Omega_1}} \, \underbrace{(\frac{\lambda_1}{b_1})^2 e^{-\lambda_1(t_N-u)}e^{-x\frac{\lambda_1}{b_1}e^{\lambda_1u}}}_{\substack{\Omega_2}} du .
\end{equation}
The integral term $\Omega_1$ represents the number of mutations, due to resistant division, that appeared at time $u$. Indeed, $\omega$ represents the mean number of mutations due to one division, $b_1 \, ds$ represents the probability that a resistant cell divides during a small interval of time $ds$ and $e^{\lambda_1(u-s)}$ represents the number of progeny at time $u$ of an \textit{ancestral resistant} cell appeared at time $s$. Finally, the integration over an exponential density of parameter $\lambda_0$ is due to the stochastic time of occurrence of each \textit{ancestral resistant} cell, following a law given by Proposition~\ref{th:ancestralresistant}(ii). $\Omega_2$ gives the approximation when $N$ tends to infinity, of the probability that a resistant cell appeared at time $u$ has exactly $i_N$ progeny at time $t_N$ (see Formula \eqref{eq:probaZi} and convergence result \eqref{conv:lN}).

As previously explained, we are not able to give an asymptotic approximation of $\E \left[\Su_{i_N}^N(t_N)\right]$ as we are not able to control the influence of the kinship events between \textit{ancestral resistant} cells. However, in Figure~\ref{fig:SiN}, the empirical expectation of $\Su_{i_N}^N(t_N)$, $S_{i_N}^N(t_N)$ and $\Sb_{i_N}^N(t_N)$ for $i_N \in [200,700]$ are respectively represented by the green line, the orange bullets and the pink line. Considering our reference parameters set, notice that $e^{\lambda_1 t_N} \approx 230$. We observe that the empirical expectations of $\Su_{i_N}^N(t_N)$ and $\Sb_{i_N}^N(t_N)$ have the same order of magnitude when $i_N \,{\sim}\, e^{\lambda_1 t_N}$ when $N\to\infty$. We observe also that, when $i_N$ increases, the empirical expectation of $\Su_{i_N}^N(t_N)$ decreases faster to $0$ than the one of $\Sb_{i_N}^N(t_N)$.\\
\begin{figure}[!ht]
  \centering
    \includegraphics[width=1.0\textwidth]{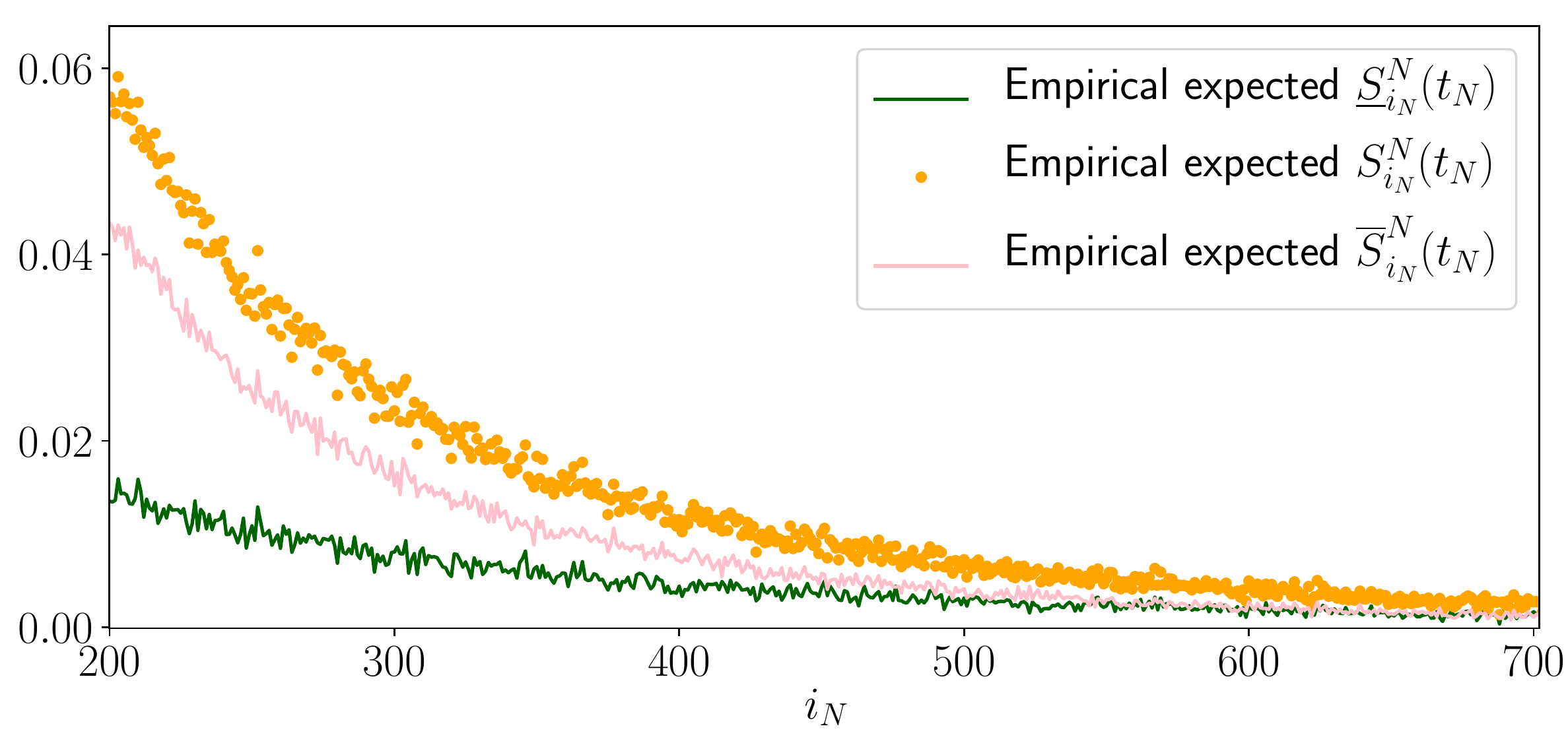}
  \caption{Empirical expectation of $\Su_{i_N}^N(t_N)$, $S_{i_N}^N(t_N)$ and $\Sb_{i_N}^N(t_N)$ for large values of $i_N$, i.e $i_N \in [200,700]$. The orange bullets, the pink and green lines correspond to the empirical expectation over $50 \,000$ realizations, respectively, of $S^N_{i_N}(t_N)$, $\Sb^N_{i_N}(t_N)$, and $\Su^N_{i_N}(t_N)$.}\label{fig:SiN}
\end{figure}


\medskip 
Finally, we illustrate and discuss the result we obtain on $S_{x_1,x_2}(t_N)$ with $0<x_1<x_2$, defined in~\eqref{def_Sx1x2}. To simplify the understanding of the illustrations and discussions, we focus on $S_x(t_N):= S_{x,+\infty}(t_N)$, for $x>0$.

We first deal with the results about $\Su_{x}^N(t_N)$, whose empirical expectation, using our reference parameters set, is drawn in Figure~\ref{fig:Sxu} with orange bullets. The blue line, in Figure~\ref{fig:Sxu}, corresponds to the function $x\mapsto b_0\gamma\omega \lambda_1 \,L(x)\,N^{1-\alpha} $, which is the theoretical approximation of $\E[\Su_{x}^N(t_N)]$ when $N$ is large (see Lemma~\ref{lem:expSuiNapp}).
\begin{figure}[!ht]
  \centering
    \includegraphics[width=0.8\textwidth]{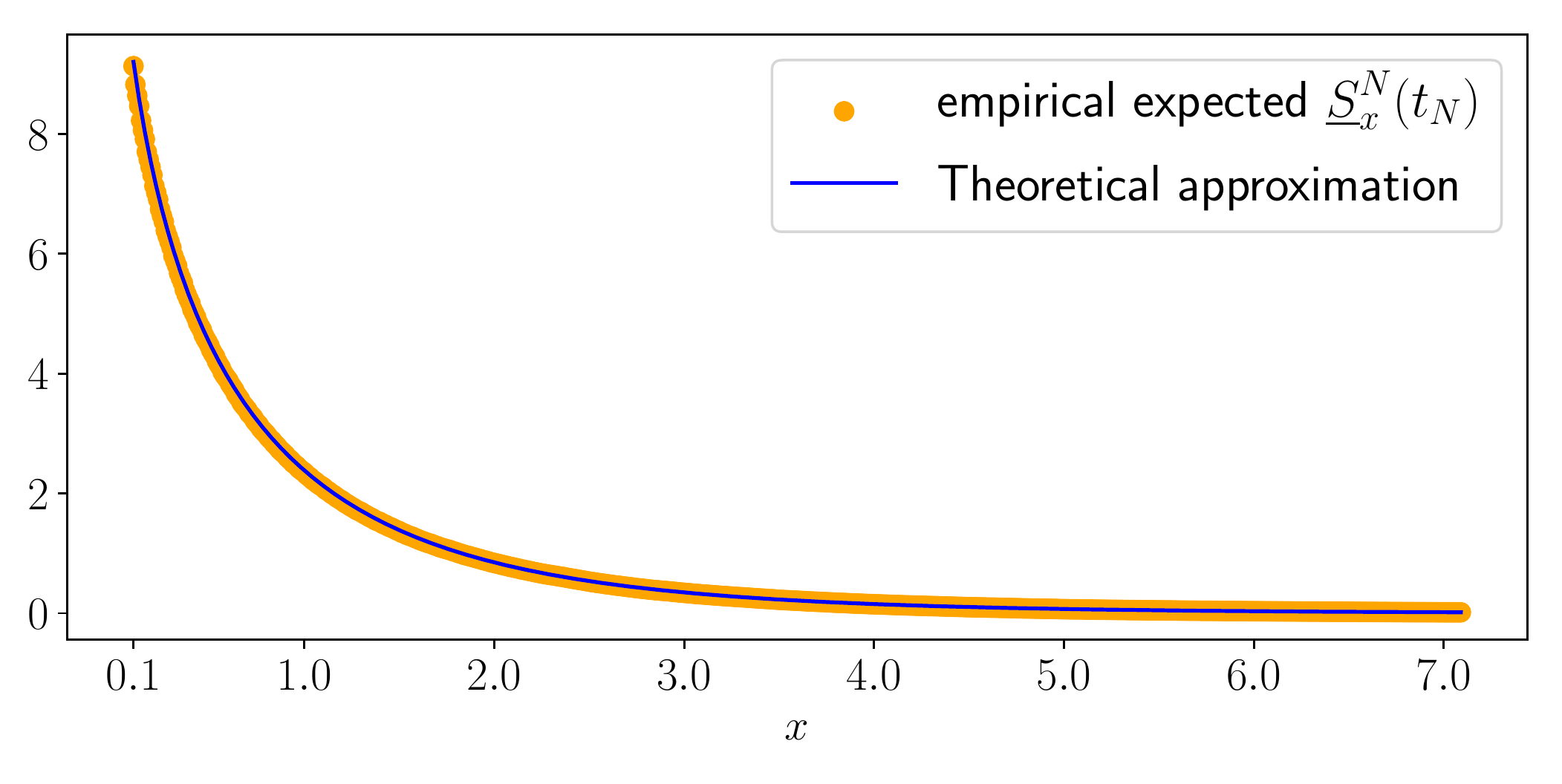}
  \caption{Empirical expectation of $\Su_{x}^N(t_N)$ for $x \in [0.1,7]$. The orange bullets correspond to the empirical expectation over $50 \,000$ realizations of $\Su_{x}^N(t_N)$. The blue line corresponds to the function ${b_0\gamma \omega \lambda_1} \, L\, N^{1-\alpha}$ with $L$ defined in \eqref{def:L}.}\label{fig:Sxu}
\end{figure}
Let us discuss the shape of this theoretical approximation. Recall that, for~$x>0$
\begin{equation}\label{eq:interSu}
N^{1-\alpha} b_0\gamma \omega \lambda_1 \, L(x)= \underbrace{\frac{2b_0\gamma }{\lambda_0} N^{1-\alpha} }_{\substack{\text{number of}\\ \textit{ancestral}\\ \textit{resistant cells}}}  \int_0^\infty \underbrace{ \frac{\omega}{2}(1+2b_0 s)}_{\substack{\Delta_1}} \, \underbrace{\frac{\lambda_1}{b_1}e^{-x\frac{\lambda_1}{b_1}e^{\lambda_1s}}}_{\substack{\Delta_2}}\underbrace{\lambda_0e^{-\lambda_0s}}_{\substack{\Delta_3}} ds .
\end{equation}
As previously discussed, the first term is an approximation of the expected total number of \textit{ancestral resistant} cells. Then, $\Delta_3$ corresponds to the limiting density of time of occurrence of an \textit{ancestral resistant} cells (cf Proposition~\ref{th:ancestralresistant}). In view of~\eqref{eq:probaZi} and the proof of Lemma~\ref{lem:expSuiNapp}, we can interpret $\Delta_2$ as an approximation of the probability that a cell, appeared at time $s$, has more than $xe^{\lambda_1 t_N}$ offspring at time $t_N$. Indeed,
\begin{align*}
    \P(\tilde{Z}(t_N-s)> x e^{\lambda_1 t_N})&=\sum_{i> x e^{\lambda_1 t_N}}\frac{\lambda_1^2}{b_1^2}\frac{e^{-\lambda_1(t_N-s)}}{(1-d_1e^{-\lambda_1(t_N-s)}/b_1)^2}\left(\frac{1-e^{-\lambda_1(t_N-s)}}{1-d_1e^{-\lambda_1(t_N-s)}/b_1}\right)^{i-1}\\
    &=\frac{\lambda_1}{b_1} \frac{e^{-\lambda_1 u}}{e^{-\lambda_1u}-\frac{d_1}{b_1}e^{-\lambda_1 t_N}}\left(\frac{1-e^{-\lambda_1(t_N-s)}}{1-d_1e^{-\lambda_1(t_N-s)}/b_1}\right)^{\lfloor x e^{\lambda_1 t_N}\rfloor}\underset{N\to\infty}{\longrightarrow} \Delta_2
\end{align*}
Finally, the factor $\Delta_1=\frac{\omega}{2}(1+2b_0 s)$ represents the mean number of mutations carried by a resistant cell that appeared at time $s$, which is proportional to the number of times a sensitive cell divides before becoming resistant at time $s$. Indeed, the term ${\omega}/{2}$ corresponds to the mean number of mutations that a cell gets after one division. The term $(1+2b_0s)$ corresponds to the division needed to become resistant and to the mean number of divisions a resistant cell makes before appearing at time $s$ knowing that its family tree has only one resistant cell. The mean number of divisions made by a sensitive cell over a time $[0,s]$ is given by $b_0s$. Hence the factor $2$ is surprising. However it has already been met in the dynamics of branching processes (see remark of the main theorem in \cite{Bansaye}). Here, it translates the increase of the number of divisions in a resistant lineage compared to a sensitive one (destined to die out).\\

Let us then discuss results about $\Sb_{x}^N(t_N)$. 
In Figure~\ref{fig:Sxb}, we draw the empirical expectation of this quantity  using orange bullets.
The blue line represents the function $x\mapsto  b_0 \gamma \omega \lambda_1 N^{1-\alpha} K(x)$ that is the asymptotic approximation of  $\E[\Sb_{x}^N(t_N)]$ when $N$ is large, given by Lemma~\ref{lem:Sbxeq}.
\begin{figure}[!ht]
  \centering
\includegraphics[width=0.85\textwidth]{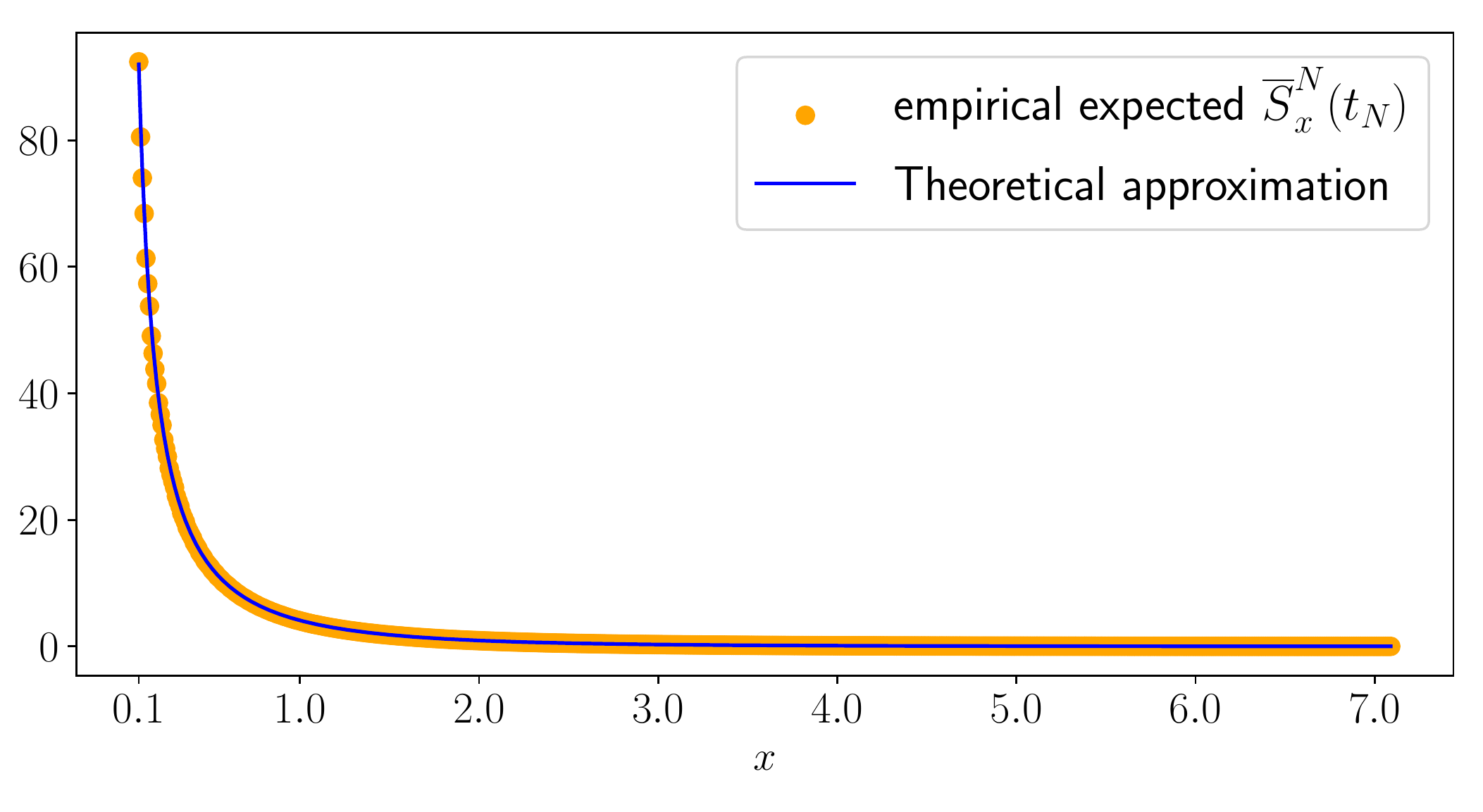}
  \caption{Empirical expectation of $\Sb_{x}^N(t_N)$ for $x \in [0.1,7]$. The orange bullets correspond to the empirical expectation over $50 \,000$ realizations of $\Sb_{x}^N(t_N)$. The blue lines correspond to the function $ b_0 \gamma \omega \lambda_1 N^{1-\alpha} K$ with $K$ defined in \eqref{def:K}.}\label{fig:Sxb}
\end{figure}

Let us discuss the shape of this theoretical approximation. Recall that, for~$x>0$
\begin{equation}\label{eq:interSux}
 b_0\gamma \omega \lambda_1 K(x) N^{1-\alpha}= \underbrace{\frac{2b_0\gamma }{\lambda_0} N^{1-\alpha} }_{\substack{\text{number of}\\ \textit{ancestral}\\ \textit{resistant cells}}}  \int_0^\infty \underbrace{ \left(\int_0^u \omega b_1 e^{\lambda_1(u-s)} \lambda_0 e^{-\lambda_0 s}ds\right)}_{\substack{\Omega_1}} \, \underbrace{\frac{\lambda_1}{b_1}e^{-x\frac{\lambda_1}{b_1}e^{\lambda_1u}}}_{\substack{\Delta_2}} du.
\end{equation}
This expression is the same as the approximation of $\E \left[\Su_{i_N}^N(t_N)\right]$ given by \eqref{eq:interSb} for which we replace $\Omega_2$ by $\Delta_2$, defined in \eqref{eq:interSu}. Indeed, in this case, we take into account mutations carried by more than $xe^{\lambda_1 t_N}$ cells and not exactly $i_N \sim e^{\lambda_1 t_N}$. For this reason, $\Omega_2$, as a function of $x$, corresponds to the derivative of $\Delta_2$ correctly renormalized.


Finally, in Figure~\ref{fig:Compare}, we draw using dash and continuous lines respectively, the functions $K$ and $L$ on $[0.6,6]$, which represents the weight of the contribution of mutations due to resistant and sensitive divisions respectively (see Formula~\eqref{eq:expSxapp}). 
\begin{figure}[!ht]
  \centering
  \subcaptionbox{\centering $b_0=1.2$, $\lambda_0=0.8$.}{%
    \includegraphics[width=0.5\textwidth]{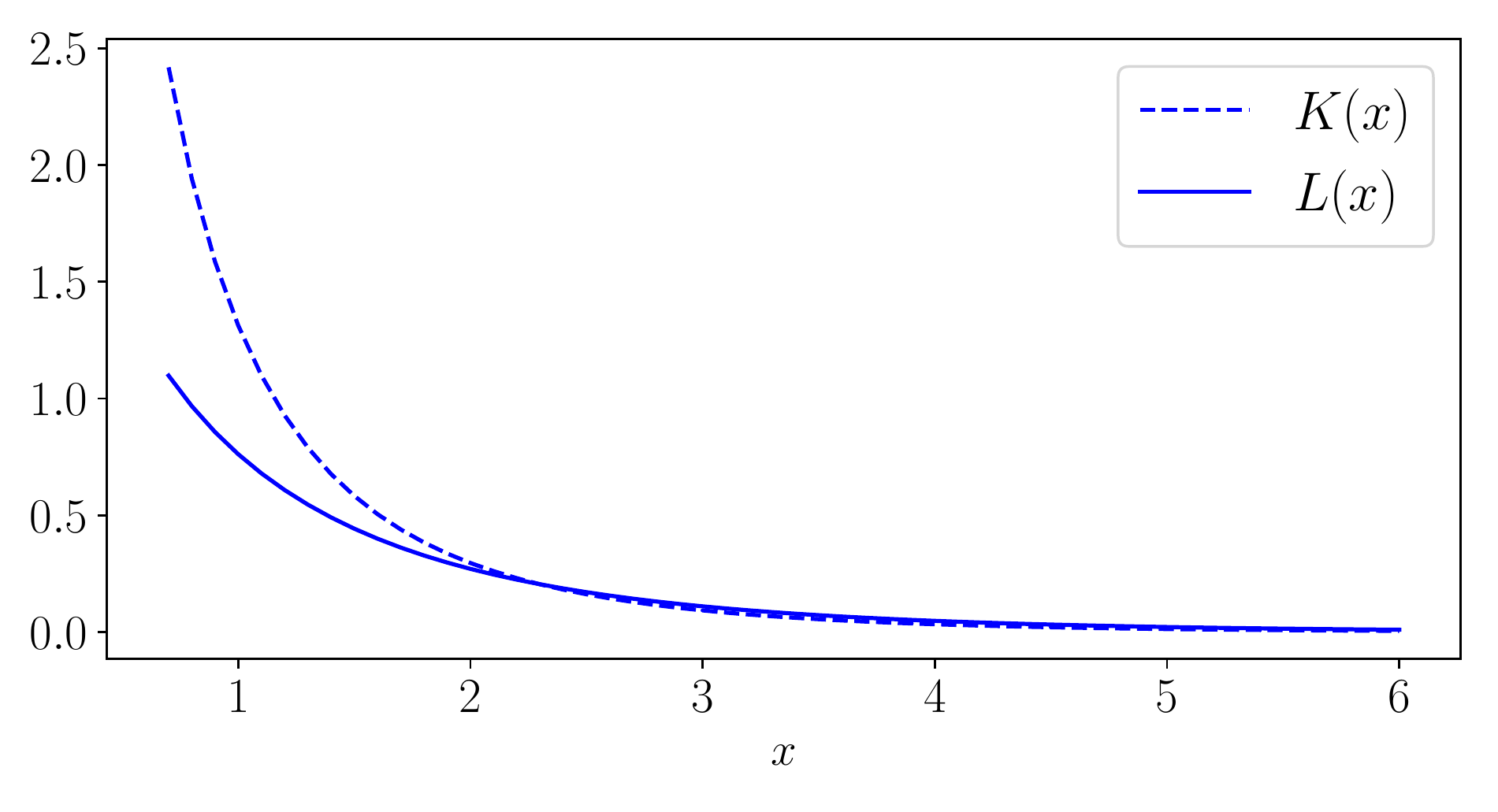}}\subcaptionbox{\centering $b_0=2.2$, $\lambda_0=0.8$.}{%
    \includegraphics[width=0.5\textwidth]{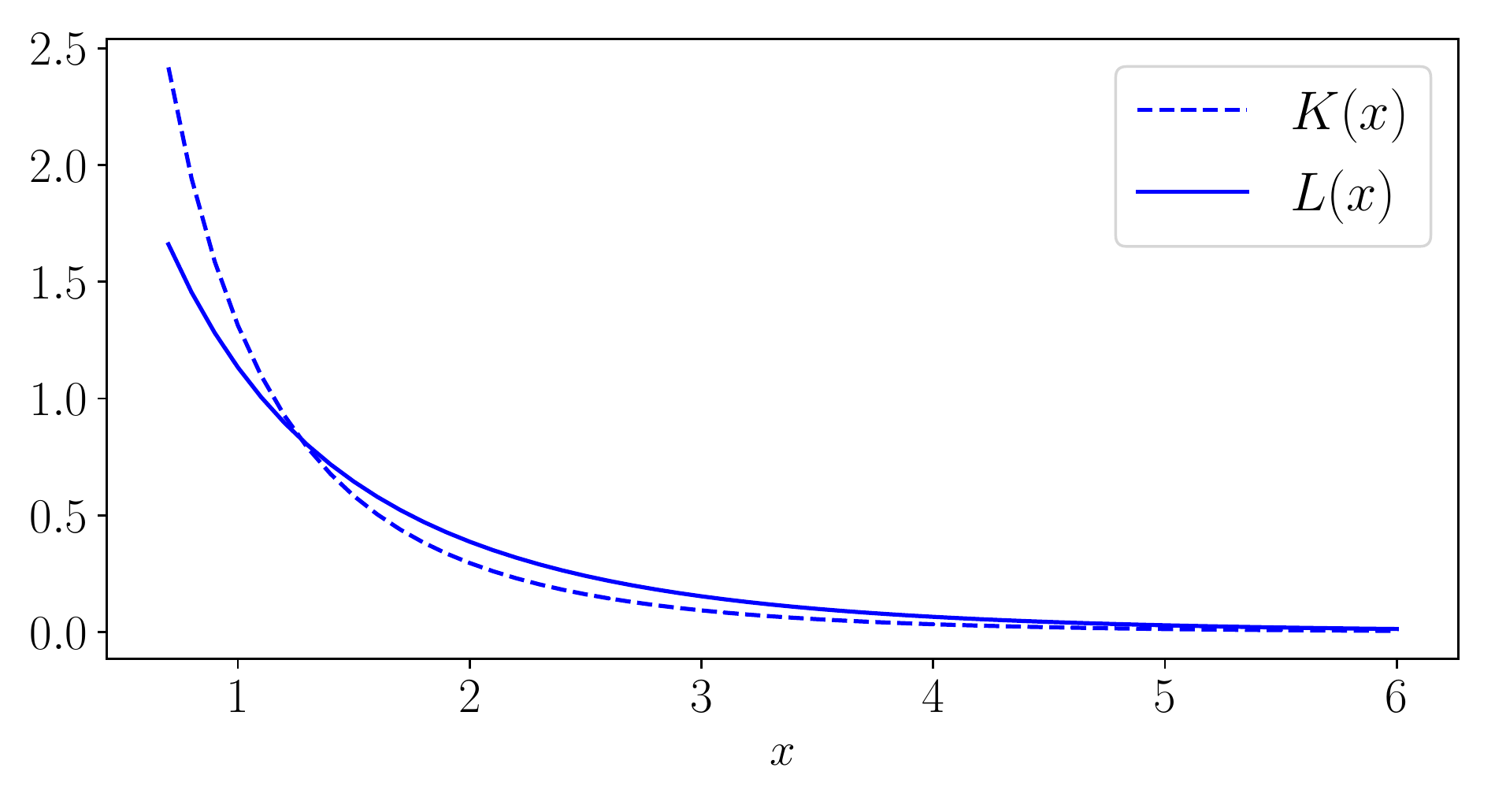}}
  \subcaptionbox{\centering $b_0=1.2$, $\lambda_0=0.3$. }{%
    \includegraphics[width=0.5\textwidth]{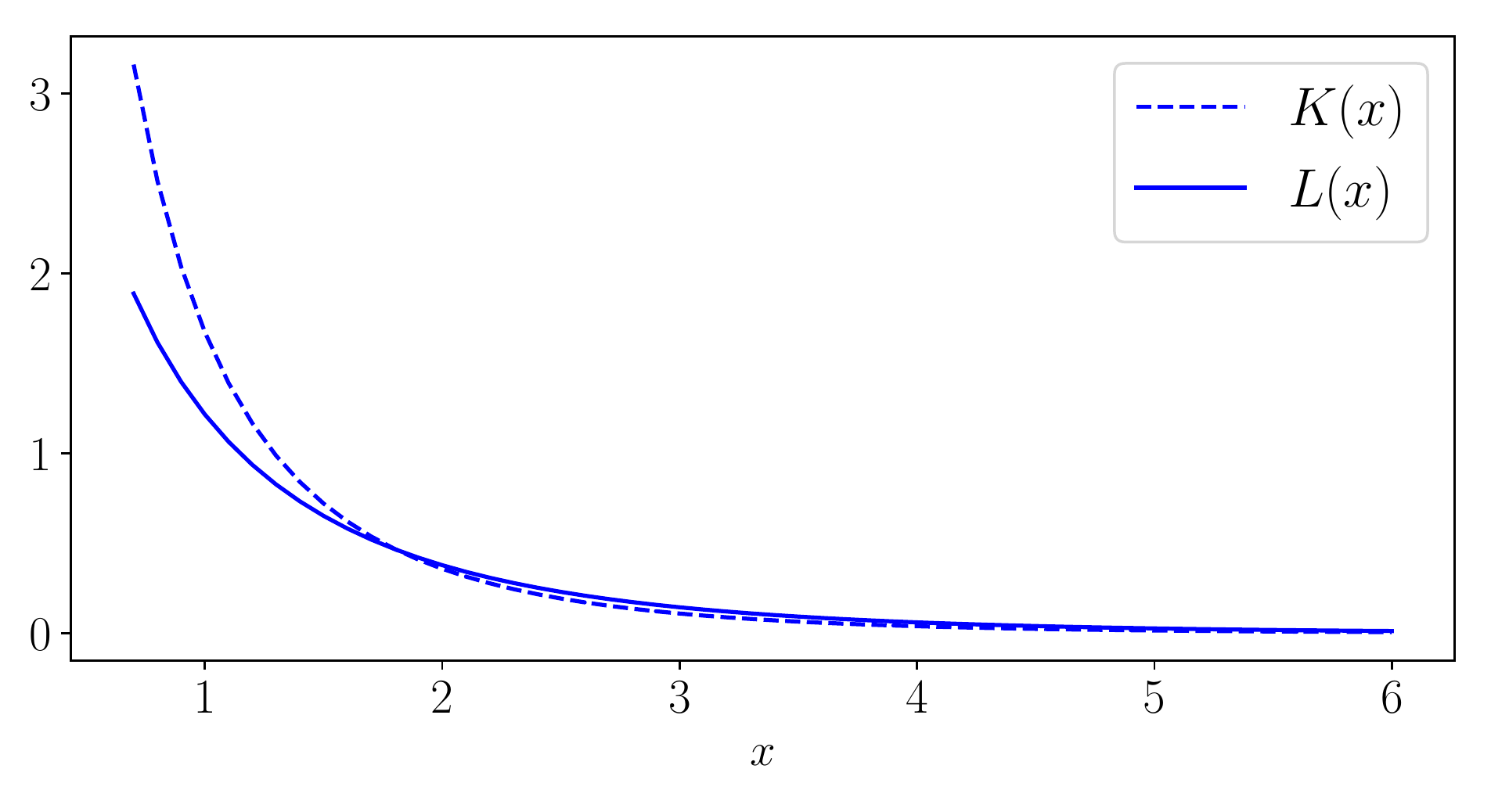}}\subcaptionbox{\centering $b_0=2.2$, $\lambda_0=0.3$}{%
    \includegraphics[width=0.5\textwidth]{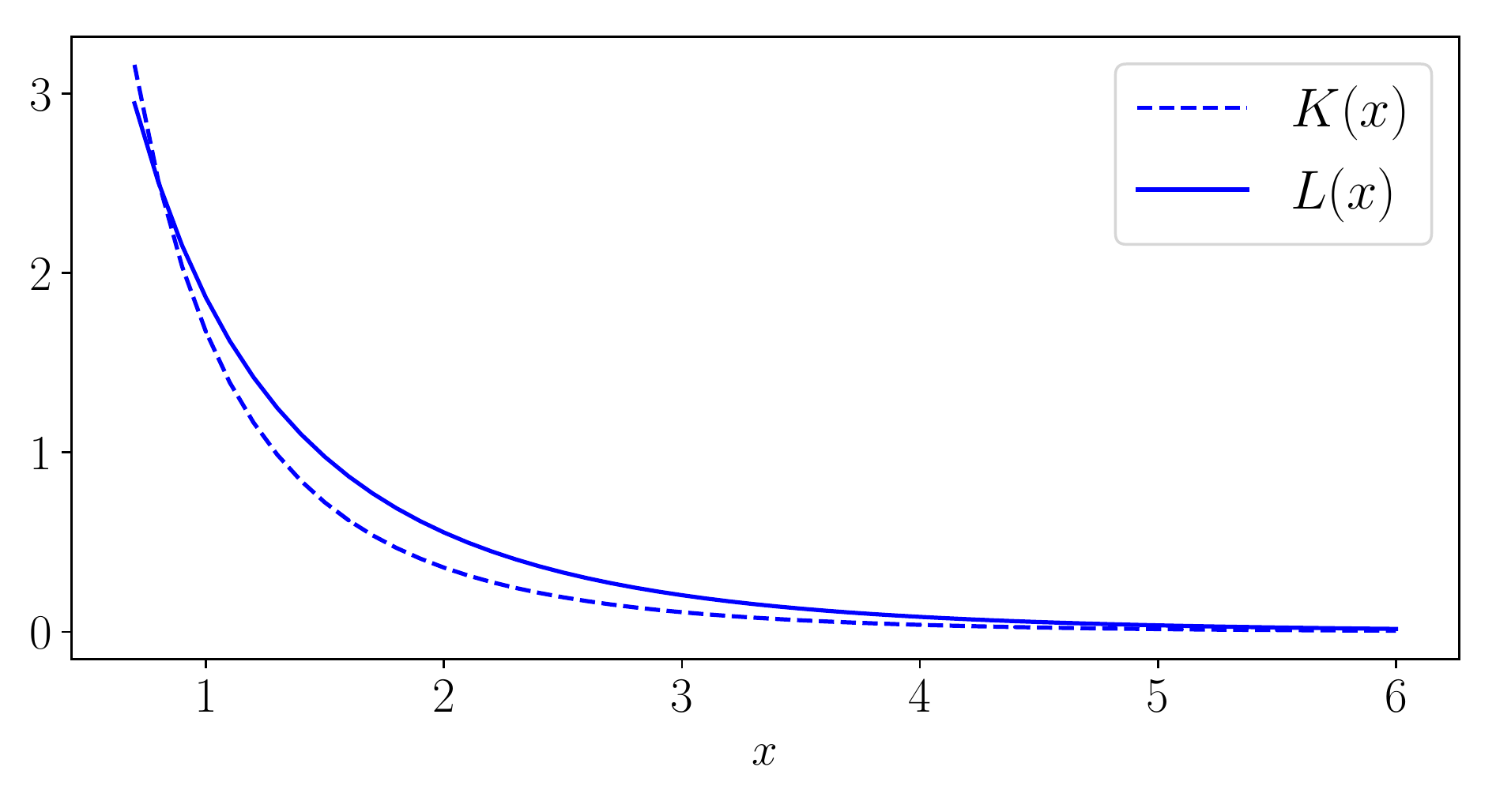}}
  \caption{Graph of the functions $K$ and $L$ on $[0.6,6]$ for the reference parameter set with different values of $b_0$ and $\lambda_0$. The dash line represents the function $K(x)$ for $x\in [0.6,6]$, defined in \eqref{def:K}. The continuous line represents the function $L(x)$ for $x\in [0.6,6]$, defined in \eqref{def:L}.}\label{fig:Compare}
\end{figure}
Notice that, depending on the value of the birth and growth rates of sensitive cells, the contribution $\E[\Su_x^N(t_N)]$ of mutations appeared in sensitive cells may become larger than the contribution $\E[\Sb_x^N(t_N)]$ of mutations appeared in resistant cells, this is true for example when $b_0$ is large, $\lambda_0$ is small and $x$ is sufficiently large ($x>1$ on Subfigure (d)). Moreover we observe that, when we don't take into account the number of \textit{ancestral resistant} cells, the influence of the sensitive dynamics in the contribution of mutations, that appeared during resistant divisions, is only due to $\lambda_0$ contrary to the mutations appeared during sensitive divisions.

\medskip

We conclude that the rescue dynamics influence the SFS associated with mutations carried by both a small and a large number of cells in the population at the characteristic time $t_N$ of extinction of sensitive cells. \\

\section*{Acknowledgements}

This research was led with financial support from ITMO Cancer of AVIESAN (Alliance Nationale pour les Sciences de la Vie et de la Santé, National Alliance for Life Sciences \& Health) within the framework of the Cancer Plan.

\appendix

\section{Annexe}

The following lemma gives the expected number of resistant mutations that occur during the extinction of the sensitive initial population.
\begin{Lem}
\label{lem:nbrresistant}
Let $N^N_R$ be the total number of \textit{ancestral resistant} cells in the process described in Section~\ref{sec:model}. 
\begin{equation}\label{eq:equivnbrmut}
    \E[N^N_R]=\frac{2b_0\gamma_N}{\lambda_0+2\gamma_Nb_0} N \underset{N\to\infty}{\sim} \frac{2b_0\gamma}{\lambda_0}N^{1-\alpha}
\end{equation}
\end{Lem}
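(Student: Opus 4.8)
The plan is to compute $\E[N^N_R]$ by a direct first-moment argument on the resistance events, bypassing the delicate tree analysis of Section~\ref{sec:AR} (which was needed only to understand the \emph{joint} structure of ancestral resistant cells, not their expected count). Recall that an ancestral resistant cell is created exactly when a division of a sensitive cell produces a daughter that becomes resistant. Since each of the two daughters turns resistant independently with probability $\gamma_N$, the expected number of ancestral resistant cells produced at a given division of a sensitive cell is $2\gamma_N$. Hence, if $D^N$ denotes the total number of divisions undergone by sensitive cells before extinction, then by conditioning on the full genealogy, $\E[N^N_R] = 2\gamma_N\,\E[D^N]$.

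Next I would compute $\E[D^N]$. A sensitive cell, ignoring neutral mutations, divides at rate $(1-\gamma_N)^2 b_0 + 2\gamma_N(1-\gamma_N)b_0 + \gamma_N^2 b_0 = b_0$ (the total division rate is unaffected by whether daughters become resistant), and is removed from the sensitive population at rate $d_0 + \gamma_N^2 b_0 \cdot(\text{contribution})$... more carefully: the sensitive population size $Z_0^N$ is a linear birth-death process with birth rate $(1-\gamma_N)^2 b_0$ per individual (both daughters sensitive) and death-or-loss rate per individual equal to $d_0 + 2\gamma_N(1-\gamma_N)b_0 + \gamma_N^2 b_0$ — since in the events $(z_0,z_1)\mapsto(z_0,z_1+1)$ at rate $2\gamma_N(1-\gamma_N)b_0 z_0$ and $(z_0,z_1)\mapsto(z_0-1,z_1+2)$ at rate $\gamma_N^2 b_0 z_0$ the sensitive count does not increase. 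Actually the cleanest route: the induced growth rate of $Z_0^N$ is $-(\lambda_0 + 2\gamma_N b_0)$, as noted in the model description, so $\E[Z_0^N(t)] = N e^{-(\lambda_0+2\gamma_N b_0)t}$. The expected total number of sensitive divisions is $\E[D^N] = b_0 \int_0^\infty \E[Z_0^N(t)]\,dt = b_0 N / (\lambda_0 + 2\gamma_N b_0)$, using that divisions occur at rate $b_0$ per sensitive cell and that $Z_0^N$ dies out a.s. (it is strictly subcritical). Combining, $\E[N^N_R] = 2\gamma_N b_0 N/(\lambda_0+2\gamma_N b_0) = 2b_0\gamma_N N/(\lambda_0 + 2\gamma_N b_0)$, which is exactly~\eqref{eq:equivnbrmut}.

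Finally, the asymptotic equivalence follows immediately: since $\gamma_N = \gamma/N^\alpha \to 0$, we have $\lambda_0 + 2\gamma_N b_0 \to \lambda_0$, so
\[
\E[N^N_R] = \frac{2b_0\gamma_N}{\lambda_0 + 2\gamma_N b_0}\,N \underset{N\to\infty}{\sim} \frac{2b_0\gamma_N}{\lambda_0}N = \frac{2b_0\gamma}{\lambda_0}N^{1-\alpha}.
\]
I do not expect a serious obstacle here; the only point requiring a little care is the interchange of expectation and the time-integral when writing $\E[D^N] = b_0\int_0^\infty \E[Z_0^N(t)]\,dt$, which is justified by Tonelli's theorem since the integrand is nonnegative, together with the explicit formula $\E[Z_0^N(t)] = N e^{-(\lambda_0+2\gamma_N b_0)t}$ obtained from the linearity of the branching dynamics (the ODE $\frac{d}{dt}\E[Z_0^N(t)] = -(\lambda_0+2\gamma_N b_0)\E[Z_0^N(t)]$ with $\E[Z_0^N(0)]=N$). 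One should also remark that each resistance event indeed creates a distinct ancestral resistant cell (the events $(z_0,z_1)\mapsto(z_0-1,z_1+2)$ contribute two), so the bookkeeping ``expected number of new resistant daughters per sensitive division $=2\gamma_N$'' is exact and matches the rate decomposition in the generator.
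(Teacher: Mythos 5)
Your proof is correct and is essentially the paper's own argument: both compute the first moment by integrating the rate of resistance events, $2\gamma_N b_0\,\E[Z_0^N(t)] = 2\gamma_N b_0 N e^{-(\lambda_0+2\gamma_N b_0)t}$, over time (the paper phrases this via a Poisson time-change representation, you via expected divisions times $2\gamma_N$, which is the same computation since $2b_0\gamma_N(1-\gamma_N)+2b_0\gamma_N^2=2b_0\gamma_N$). The momentary mislabeling of the ``death-or-loss'' rate is immaterial since you discard it in favor of the correct induced growth rate $-(\lambda_0+2\gamma_N b_0)$, exactly as the paper does.
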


\begin{proof}
The dynamics of the process $N^N_R$ are influenced by only two different events which can be modeled by two independent Poisson processes. Hence, there exist $Y_1$ and $Y_2$, two independent Poisson processes which are both independent of the process $Z_0$, such that we can write
$$N^N_R=Y_1 \left( 2b_0\gamma_N(1-\gamma_N)\int_0^{\infty} Z^N_0(s)ds\right) +2 \, Y_2 \left( b_0\gamma_N^2\int_0^{\infty} Z^N_0(s)ds\right).$$

Notice that, for all $t \geq 0$, $\mathbb{E}[Z^N_0(t)]= Ne^{-(\lambda_0+2\gamma_Nb_0)\,t}$, we deduce by taking the expectation in the previous expression that \begin{equation*}
\mathbb{E}[N_R^N]= 2b_0\gamma_N(1-\gamma_N)\int_0^{\infty} \mathbb{E}[Z^N_0(s)]ds+2 b_0\gamma_N^2\int_0^{\infty} \mathbb{E}[Z^N_0(s)]ds = \frac{2b_0 \gamma_N}{\lambda_0+2\gamma_Nb_0}N.
\end{equation*}
\end{proof}

This second lemma states the SFS of a birth and death process that is supercritical and that starts with one individual. The proof follows the proof of Proposition 3.1 in \cite{Richard} and is given for clarity in order to adapt it to our notation.
\begin{Lem}
\label{lemS1}
    For all $i\in \N^*$, $N\in \N^*$ and $t> 0$,
    \begin{equation*}\label{eq:SFS1}
    \mathbb{E}\left[S^{N}_i(t) \vert Z^N(0)=(0,1)\right]= w e^{\lambda_1 t} \int_0^{ \frac{e^{\lambda_1 t}-1}{e^{\lambda_1 t}-d_1/b_1}} \frac{ 1-y}{ 1-\frac{d_1}{b_1}y}y^{i-1}dy.
    \end{equation*}
    Moreover,  for $t_N=t\log(N)$
    \begin{equation}\label{Gunnarsson}
    \mathbb{E}\left[S^{N}_i(t_N) \vert Z^N(0)=(0,1)\right]\underset{N\to\infty}{\sim} w N^{\lambda_1 t} I(i),
    \end{equation}
    where the definition of $I(i)$ is given by~\eqref{def:I}.
\end{Lem}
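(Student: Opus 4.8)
The plan is to adapt the proof of Proposition~3.1 in \cite{Richard}: first a first-moment (many-to-one) computation over the division events of the resistant subtree, then a change of variables putting the resulting integral in the stated form, and finally a dominated-convergence argument for the asymptotics. Conditionally on $Z^N(0)=(0,1)$ the resistant population evolves as a linear birth and death process $\tilde Z$ with birth rate $b_1$ and death rate $d_1$ started from one individual; in particular $\gamma_N$ plays no role and $S^N_i(t)$ under this conditioning does not depend on $N$. Since here we only count mutations that appeared inside the resistant subtree (i.e.\ in $\Sb$, not in $\Su$), the root cell contributes no mutation, and each counted mutation is created at a division of some resistant cell at a time $s\in(0,t]$, belongs to exactly one of the two daughters produced at that division, and at time $t$ is present in precisely the number $Z_c(t)$ of time-$t$ descendants of that daughter $c$. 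Writing $(c^1_s,c^2_s)$ for the daughters produced at a division occurring at time $s$ and $(N^1_\omega(s),N^2_\omega(s))$ for their numbers of fresh mutations (independent, mean $\omega/2$, independent of the subtrees they root), this gives the pathwise identity
\[
S^N_i(t)=\sum_{\text{divisions at }s\le t}\Bigl(N^1_\omega(s)\,\one_{\{Z_{c^1_s}(t)=i\}}+N^2_\omega(s)\,\one_{\{Z_{c^2_s}(t)=i\}}\Bigr).
\]

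Taking expectations in this identity is the heart of the argument (all terms are nonnegative, so the interchange of sum and expectation is legitimate). Conditionally on the trajectory up to time $s$, the division times form a point process of intensity $b_1\tilde Z(s^-)\,ds$, and a daughter born at time $s$ founds an independent copy of $\tilde Z$ run for the remaining time $t-s$, independent of its fresh mutations. Combining this with $\E[\tilde Z(s)]=e^{\lambda_1 s}$ and the fact that each of the two daughters contributes mean $\omega/2$, the first-moment formula yields
\[
\E\bigl[S^N_i(t)\,\vert\,Z^N(0)=(0,1)\bigr]=\omega\,b_1\int_0^t e^{\lambda_1 s}\,\P\bigl(\tilde Z(t-s)=i\bigr)\,ds=\omega\,b_1\,e^{\lambda_1 t}\int_0^t e^{-\lambda_1 u}\,\P\bigl(\tilde Z(u)=i\bigr)\,du,
\]
the second equality being the change $u=t-s$. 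I would then insert the classical transition probabilities of the supercritical linear birth and death process (see \cite{Harris1963}, also recorded in \eqref{eq:probaZi}), namely $\P(\tilde Z(u)=i)=\kappa_i(e^{-\lambda_1 u})$ with $\kappa_i(x)=\frac{\lambda_1^2}{b_1^2}\frac{x(1-x)^{i-1}}{(1-\frac{d_1}{b_1}x)^{i+1}}$ for $i\ge1$.

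It then remains to simplify the integral. Writing $c:=d_1/b_1<1$, so that $1-c=\lambda_1/b_1$, I would use the change of variables $y=\frac{1-e^{-\lambda_1 u}}{1-ce^{-\lambda_1 u}}$, which maps $u\in[0,t]$ increasingly onto $y\in[0,\,\frac{e^{\lambda_1 t}-1}{e^{\lambda_1 t}-d_1/b_1}]$. A short computation gives the two identities $\frac{1-y}{1-cy}=e^{-\lambda_1 u}$ and $du=\frac{b_1}{\lambda_1^2}\frac{(1-ce^{-\lambda_1 u})^2}{e^{-\lambda_1 u}}\,dy$; substituting these, the $e^{-\lambda_1 u}$ and $(1-ce^{-\lambda_1 u})$ factors cancel and one is left with $\omega e^{\lambda_1 t}\int_0^{(e^{\lambda_1 t}-1)/(e^{\lambda_1 t}-d_1/b_1)}\frac{1-y}{1-\frac{d_1}{b_1}y}y^{i-1}\,dy$, which is the first claim. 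For \eqref{Gunnarsson} I would take $t=t_N=t\log N$, so that $e^{\lambda_1 t_N}=N^{\lambda_1 t}$ and the upper limit $\frac{N^{\lambda_1 t}-1}{N^{\lambda_1 t}-d_1/b_1}$ increases to $1$; since $1-\frac{d_1}{b_1}y\ge\frac{\lambda_1}{b_1}$ on $[0,1]$ the integrand is bounded there by $\frac{b_1}{\lambda_1}$, so by dominated convergence the integral tends to $I(i)=\int_0^1\frac{1-y}{1-\frac{d_1}{b_1}y}y^{i-1}\,dy>0$, whence $\E[S^N_i(t_N)\,\vert\,(0,1)]\sim\omega\,N^{\lambda_1 t}\,I(i)$.

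The main obstacle is the rigorous justification of the second paragraph: showing that each counted neutral mutation corresponds to exactly one (division, daughter) pair and that the expectation of the pathwise sum factorizes through the intensity $b_1\tilde Z(s^-)$ of division events and an independent, freshly started copy of $\tilde Z$. This is precisely the many-to-one (first-moment) identity for the linear birth and death process, which can be obtained e.g.\ by conditioning successively on the division times, or by the decomposition used for Proposition~3.1 in \cite{Richard}. Once it is in place, inserting the explicit law of $\tilde Z(u)$ and carrying out the change of variables are routine, and the asymptotics is an immediate application of dominated convergence.
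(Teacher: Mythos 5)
Your proposal is correct and follows essentially the same route as the paper: a first-moment decomposition of the expected SFS over resistant division events (the paper parametrizes by the mutation's age $a=t-s$, you by the division time $s$), insertion of Harris's transition probabilities for the linear birth and death process, the change of variables $y=\frac{1-e^{-\lambda_1 u}}{1-\frac{d_1}{b_1}e^{-\lambda_1 u}}$, and the observation that the upper integration limit tends to $1$ for the asymptotics.
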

\begin{proof}
We assume that initially the population is given by only one resistant cell. We denote by $M^{i,da}_t$ the number of mutations carried by $i$ resistant cells at time $t$ with ages in $[a-da,a]$.
Hence $M^{i,da}_t$ is given by the number of mutations that appeared on any resistant cells at time $t-a$,
	\begin{equation*}
	\mathbb{E}[M_t^{i,da}]= 2 \, \mathbb{E} \Big[ \sum_{j=1}^{Z_1(t-a)} N_w\one_{\text{division between $t-a-da$ and $t-a$, } \tilde{Z}^j_1(a)=i}\Big] 
	\end{equation*}
	where $\tilde{Z}^j_1$ is a supercritical birth and death process with parameters $b_1>d_1$ starting by one cell. The $2$ in the previous formula corresponds to the contribution of the two daughter cells using the branching property of the process we are studying.
 
	Then taking conditional expectation and using independence between cells we obtain, 
	\begin{equation*}
	\begin{split}
	\mathbb{E}[M_t^{i,da}]&= 2 \, \mathbb{E} \Big[ Z_1(t-a) \Big] \mathbb{E} \Big[ N_w\Big] \mathbb{P}(\text{division between $t-a-da$ and $t-a$}) \mathbb{P}(Z_1(a)=i)\\
	&=2\,e^{(b_1-d_1)(t-a)}(w/2) b_1 da \mathbb{P}(\tilde{Z}_1(a)=i).
	\end{split}
	\end{equation*}
	For all $t>0$, the law of $\tilde{Z}_1(t)$ is given by (see \cite{Harris1963} Chapt 5; formula (7.3)),
	$$
	\mathbb{P}(\tilde{Z}_1(a)=i)= \frac{(b_1-d_1)^2 e^{(b_1-d_1)a}}{(b_1e^{(b_1-d_1)a}-d_1)^2} \Big( \frac{b_1(e^{(b_1-d_1)a}-1)}{b_1e^{(b_1-d_1)a}-d_1} \Big)^{i-1}
	$$
	Hence, we obtain, $$ \mathbb{E}[M_t^{i,da}]=w \frac{\lambda_1^2 e^{\lambda_1t}}{(b_1e^{\lambda_1 a}-d_1)^2} \Big( \frac{b_1(e^{\lambda_1 a}-1)}{b_1e^{\lambda_1 a}-d_1} \Big)^{i-1} b_1 da.$$
	The result is deduced by a change of variable as those found in \cite{Gunnarssonetal2021} and by noticing that $$\mathbb{E}[S^{N}_i(t) \vert Z^N(0)=(0,1)]=\int_0^t \mathbb{E}[M_t^{i,da}]da.$$ 
	
	Finally, the asymptotically equivalent expression is obtained as soon as we notice that \[
	\frac{e^{\lambda_1 t_N}-1}{e^{\lambda_1 t_N}-d_1/b_1}\underset{N\to\infty}{\longrightarrow} 1.
	\]
\end{proof}

Following the proof of Proposition~\ref{th:ancestralresistant}, the last lemma gives the law of the generation and the appearance time of one \textit{ancestral resistant} cell chosen uniformly at random conditioned on belonging to a progeny that carried at least one \textit{ancestral resistant} cell (and not exactly one as for Proposition~\ref{th:ancestralresistant}).
\begin{Lem}\label{lem:TN_annexe}
 For any $N\in \N^*$, we denote by $ \tilde{G}_N$ and $\tilde{T}_N$, the generation and the appearance time of an \textit{ancestral resistant} cell chosen uniformly at random and conditioned on belonging to a progeny that carried at least one \textit{ancestral resistant} cell and we introduce
	\begin{equation}
	\label{def:pNtilde}
 \tilde{p}_N:=\frac{1-x_N}{2}=\frac{1-\sqrt{1-4p_N(1-p_N)(1-\beta_N)}}{2}
	\end{equation} with $x_N$, $p_N$ and $\beta_N$ defined in \eqref{def:pNbetaN}. Then
	\begin{itemize}
		\item[(i)] the law of $\tilde{G}_N$ is characterized by, for all $g \in \mathbb{N^*}$, \begin{equation*}
		\mathbb{P}(\tilde{G}_N=g)= \frac{2^{g-1}}{p_N-\tilde{p}_N}\left[\frac{1}{g}((p_N)^{g}-(\tilde{p}_N)^{g})-\frac{2}{g+1}((p_N)^{g+1}-(\tilde{p}_N)^{g+1}) \right]
		\end{equation*}
	
		\item[(ii)] and the density of $\tilde{T}_N$, $f_{\tilde{T}_N}$, is written,  for all  $t\geq0$, 
		\begin{equation*}
		f_{\tilde{T}_N}(t) = \frac{ e^{-t(b_0+d_0)(1-2p_N)}}{2t\,(p_N-\tilde{p}_N)} \left[ (1-e_N(t))\left(1+\frac{1}{t(b_0+d_0)}\right) - 2(p_N-\tilde{p}_Ne_N(t))\right]
		\end{equation*}
		where $$e_N(t)=e^{-\displaystyle2t(b_0+d_0)\,(p_N-\tilde{p}_N)}.$$
	\end{itemize}

\end{Lem}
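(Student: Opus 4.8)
The plan is to follow the scheme of the proof of Proposition~\ref{th:ancestralresistant}: work with the modified subcritical binary Galton--Watson tree $\mathcal{T}(p_N,\beta_N)$ of that proof, in which marked leaves are identified with \textit{ancestral resistant} cells, and reduce the statement to the analogue of Lemma~\ref{lemma:GWgen} in which the conditioning event ``exactly one marked leaf'' is replaced by the event $\mathcal{M}$ ``at least one marked leaf'' and one additionally picks a marked leaf uniformly at random among all of them. Concretely, $\tilde G_N$ will be identified with the generation $\hat G$ of such a uniformly chosen marked leaf of $\mathcal{T}(p_N,\beta_N)$ conditioned on $\mathcal{M}$, the reduction (first-generation Markov property together with the constraint that the root is sensitive, and the attendant shift by one generation) being carried out following the argument of Proposition~\ref{th:ancestralresistant}. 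Part $(ii)$ then follows by mixing exponential lifetimes over the law of $\tilde G_N$, exactly as $T_N$ was obtained from $G_N$.

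For the variant of Lemma~\ref{lemma:GWgen}, the key point is again to condition on the number of leaves $\lambda(\T)=n$: given the shape of $\T$, the leaves are marked i.i.d.\ with probability $\beta_N$, so by exchangeability of the marking, conditionally on $\{\lambda(\T)=n\}\cap\mathcal{M}$ the chosen marked leaf is uniform among the $n$ leaves. Keeping the notation $u_n=\P(\lambda(\T)=n)$ and $v_{g,n}=\P(G^\T=g,\lambda(\T)=n)$ of the proof of Lemma~\ref{lemma:GWgen}, this yields
\[
\P\big(\hat G=g \;\big|\; \mathcal{M}\big)=\frac{\sum_{n\ge1}\big(1-(1-\beta_N)^n\big)v_{g,n}}{\sum_{n\ge1}\big(1-(1-\beta_N)^n\big)u_n}.
\]
I would evaluate the denominator with $\sum_n u_n=1$ and $\sum_n(1-\beta_N)^nu_n=F(1-\beta_N)=\tilde p(1-\beta_N)/p_N=\tilde p_N/p_N$ (using \eqref{eq:F} and \eqref{def:pNtilde}), so that it equals $(p_N-\tilde p_N)/p_N$. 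For the numerator I would introduce $V_g(y)=\sum_nv_{g,n}y^n$ and, starting from the recursion \eqref{rec:v} together with the identity $\sum_n\alpha_n\big(p_N(1-p_N)y\big)^n=\tilde p(y)$ read off from \eqref{eq:F}, obtain $V_g(y)=2^{g-1}(1-p_N)\int_0^y\tilde p(t)^{g-1}\,dt$; the change of variable $q=2\tilde p(t)$ then gives, for all $g\ge1$,
\[
V_g(y)=\frac{1}{2p_N}\left[\frac{(2\tilde p(y))^{g}}{g}-\frac{(2\tilde p(y))^{g+1}}{g+1}\right].
\]
Evaluating at $y=1$ (where $\tilde p(1)=p_N$) and at $y=1-\beta_N$ (where $\tilde p(1-\beta_N)=\tilde p_N$) and dividing by the denominator reproduces exactly the stated formula for $\P(\tilde G_N=g)$, which is $(i)$.

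For $(ii)$, the lifetimes along any lineage of the modified tree are i.i.d.\ exponential with parameter $\delta_0$, so $\tilde T_N=\sum_{i=1}^{\tilde G_N}\mathcal{E}_i$ with the $\mathcal{E}_i$ i.i.d.\ exponential of parameter $\delta_0$ and independent of $\tilde G_N$; hence $\tilde T_N\mid\{\tilde G_N=g\}$ is $\mathrm{Gamma}(\delta_0,g)$ and
\[
f_{\tilde T_N}(t)=\delta_0 e^{-\delta_0 t}\sum_{g\ge1}\P(\tilde G_N=g)\,\frac{(\delta_0 t)^{g-1}}{(g-1)!}.
\]
I would insert the formula for $\P(\tilde G_N=g)$ and reduce the computation to the two power series, valid for $a\in\{p_N,\tilde p_N\}$,
\[
\sum_{g\ge1}\frac{2^{g-1}a^{g}}{g}\frac{(\delta_0 t)^{g-1}}{(g-1)!}=\frac{e^{2a\delta_0 t}-1}{2\delta_0 t},\qquad
\sum_{g\ge1}\frac{2^{g-1}a^{g+1}}{g+1}\frac{(\delta_0 t)^{g-1}}{(g-1)!}=\frac{(2a\delta_0 t-1)e^{2a\delta_0 t}+1}{4\delta_0^2 t^2},
\]
each obtained by writing $1/(g+k)=\int_0^1u^{g+k-1}\,du$ for $k\in\{0,1\}$ and summing an exponential series. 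Factoring out $e^{-\delta_0 t(1-2p_N)}$ and recognising $e_N(t)=e^{-2\delta_0 t(p_N-\tilde p_N)}$, a short rearrangement gives the stated density.

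The step I expect to be the main obstacle is the reduction in $(i)$: unlike the ``exactly one marked leaf'' case of Proposition~\ref{th:ancestralresistant}, where splitting at the first generation sends the unique marked leaf into precisely one of the two subtrees, here the uniformly chosen marked leaf may lie in either subtree and the two subtrees are coupled through the conditioning event $\mathcal{M}$; matching the bookkeeping of the root constraint (and the generation shift) with this split is the delicate point, and it is what must be checked with care to land on the normalisation $p_N-\tilde p_N$ rather than a quantity involving $p_N^2-\tilde p_N^2$. The generating-function evaluation of $V_g$ (justifying term-by-term integration and the change of variable $q=2\tilde p(t)$) is the other place requiring attention; once $\P(\tilde G_N=g)$ is in hand, part $(ii)$ is a routine series computation.
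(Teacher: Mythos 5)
Your proposal is correct and follows essentially the same route as the paper's own proof: the paper likewise replaces "exactly one marked leaf" by "at least one" in the setting of Lemma~\ref{lemma:GWgen}, evaluates the denominator through $\sum_n(1-\beta_N)^nu_n=\tilde p_N/p_N$, computes the numerator sums $\sum_n v_{g,n}$ and $\sum_n(1-\beta_N)^nv_{g,n}$ by exactly the manipulations you package into the generating function $V_g(y)$ (integral representation of the $1/n$ factor, Cauchy product, substitution $p_N\mapsto\tilde p_N$), and obtains part (ii) by the same Gamma mixture and exponential-series identities. The reduction step you flag as delicate (root constraint and generation shift for the "at least one mark" conditioning) is treated in the paper only by invoking "the same arguments as the proof of Proposition~\ref{th:ancestralresistant}~(i)", so your treatment is at the same level of detail as the paper's on that point.
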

\begin{proof}
This proof follows the proof of Proposition~\ref{th:ancestralresistant} and will use Lemma~\ref{lemma:GWgen} for which the event $\mathcal{R}^{\mathcal{T}(p,\beta)}$ will not correspond to $\mathcal{T}(p,\beta)$ has exactly one marked leaf but at least one marked leaf. In the following we make the calculus corresponding to this change.
Notice that, in this case, for all $g,n\in \N^*$, $\P(\RR|G=g,\lambda(\T)=n)=\P(\RR|\lambda(\T)=n)=1-(1-\beta)^n$. Hence the Equation~\eqref{eq:HcondR1} become,
\begin{equation}\label{eq:PGR}
        \P(G=g|\RR)=\frac{\sum_{n\geq 1}(1-(1-\beta)^n)v_{g,n}}{\sum_{n\geq 1}(1-(1-\beta)^n)u_n}=\frac{\sum_{n\geq 1}v_{g,n}-\sum_{n\geq 1}(1-\beta)^nv_{g,n}}{1-\sum_{n\geq 1}(1-\beta)^nu_n}.
    \end{equation}
Thus, using the same arguments as previously and Equation~\eqref{eq:linksumalphap} which is true for any $p<1/2$, we can calculus the three sums of Equation~\eqref{eq:PGR}. Indeed
    \begin{equation}\label{eq:sum1_an}
    \sum_{n\geq 1}(1-\beta)^nu_n=\frac{1}{p}\sum_{n\geq 1}\alpha_n[(1-\beta)p(1-p)]^n
    =\frac{1}{p}\sum_{n\geq 1}\alpha_n\tilde{p}^n(1-\tilde{p})^n=\frac{\tilde p }{p},
    \end{equation}
    where $\tilde{p}$ satisfies  $\tilde{p}<1/2$ and \begin{equation}\label{def:tildep_an}
	\tilde{p}(1-\tilde{p})=p(1-p)(1-\beta).
	\end{equation}
 Moreover, using induction formula~\eqref{rec:v}, we obtain for all $g\geq3$, 
	\begin{align*}
	    \sum_{n\geq 1}v_{g,n}&=\frac{2^{g-1}}{p}\sum_{n\geq g}\frac{[p(1-p)]^n}{n}\gamma_{n,g}\\
	    &=\frac{2^{g-1}}{p}\int_0^p(1-2x)\sum_{n\geq g}[x(1-x)]^{n-1}\gamma_{n,g}\\
	    &=\frac{2^{g-1}}{p}\int_0^p\frac{(1-2x)}{x(1-x)}\sum_{n\geq g}\sum_{i=1}^{n-g+1}\alpha_i[x(1-x)]^{i}\gamma_{n-i,g-1}[x(1-x)]^{n-i},
	    \end{align*}
	    where we identify a Cauchy product of infinite sum. Thus, using again an induction and then Equation~\eqref{eq:linksumalphap}, we obtain for all $g\geq3$, 
	    \begin{align}
	   \sum_{n\geq 1}v_{g,n}&=\frac{2^{g-1}}{p}\int_0^p\frac{(1-2x)}{x(1-x)}\left(\sum_{n\geq 1} \alpha_n[x(1-x)]^{n}\right)\left(\sum_{n\geq g-1}\gamma_{n,g-1}[x(1-x)]^{n}\right)dx,\nonumber\\
	   &=\frac{2^{g-1}}{p}\int_0^p\frac{(1-2x)}{x(1-x)}\left(\sum_{n\geq 1} \alpha_n[x(1-x)]^{n}\right)^{g-2}\left(\sum_{n\geq 2}\alpha_{n-1}[x(1-x)]^{n}\right)dx,\nonumber\\
	   &=\frac{2^{g-1}}{p}\int_0^p(1-2x)x^{g-1}dx,\nonumber\\
	   &=(2p)^{g-1}\left(\frac{1}{g}-\frac{2p}{g+1}\right),\label{eq:sum2_an}
	\end{align}
	which gives the value of the second sum. Moreover, as $\sum_{n\geq 1}v_{1,n}=1-p$ and \\ $\sum_{n\geq 1}v_{2,n} = \frac{2}{p}\int_0^p(1-2x)x \, dx$, Formula \eqref{eq:sum2_an} is also true for $g \in \{1,2\}$. To deal with the last sum of the r.h.s. of Equation~\eqref{eq:PGR}, we use the fact that the previous computations to deal with the second sum are true for any $p<1/2$. Thus, writing $\tilde{v}_{g,n}$ such that $v_{g,n}=p^{n-1}(1-p)^n\tilde{v}_{g,n}$ and using Equations~\eqref{def:tildep_an} and~\eqref{eq:sum2_an}, we have
    \begin{align}
    \sum_{n\geq 1}(1-\beta)^nv_{g,n}&=\frac{\tilde p}{p}\sum_{n\geq 1}\tilde{p}^{n-1}(1-\tilde{p})^n\tilde{v}_{g,n}\nonumber\\
    &=\frac{2^{g-1}\tilde{p}^{g}}{p}\left(\frac{1}{g}-\frac{2\tilde{p}}{g+1}\right).\label{eq:sum3_an}
    \end{align}
  Using the same arguments as the proof of Proposition~\ref{th:ancestralresistant} (i), that concludes the proof of Lemma~\ref{lem:TN_annexe} (i).
  
As previously, to find the density of $\tilde{T}_N$, it is sufficient to notice that the life time of each sensitive cells is distributed as exponential r.v. with parameter $b_0+d_0$, and that the chosen \textit{ancestral resistant} cell has $\tilde{G}_N$ ancestors. Thus, $\tilde{T}_N=\sum_{i=1}^{\tilde{G}_N}\mathcal{E}_i$, where $(\mathcal{E}_i)_{i\in \N}$ is a sequence of i.i.d. r.v. of exponential law with parameter $b_0+d_0$ and independent from $\tilde{G}_N$. That is, 
	\begin{align*}
	    f_{\tilde{T}_N}(t)&= \sum_{g=1}^\infty \P(\tilde{G}_N=g)f_{\Gamma(g,b_0+d_0)}(t)\\
	    &=\sum_{g=1}^\infty\frac{2^{g-1}}{p_N-\tilde{p}_N}\left[\frac{1}{g}((p_N)^{g}-(\tilde{p}_N)^{g})-\frac{2}{g+1}((p_N)^{g+1}-(\tilde{p}_N)^{g+1}) \right]\frac{t^{g-1}(b_0+d_0)^ge^{-t(b_0+d_0)}}{(g-1)!}\\
	    &=\frac{e^{-t(b_0+d_0)}}{p_N-\tilde p_N}\left[\frac{1}{2t}(e^{2t(b_0+d_0)p_N}-e^{2t(b_0+d_0)\tilde p_N})\right.\\
	    &\qquad\qquad\left. -\frac{1}{2t^2(b_0+d_0)}\left(e^{2p_Nt(b_0+d_0)}(2p_Nt(b_0+d_0)-1)-e^{2\tilde{p_N}t(b_0+d_0)}(2\tilde{p_N}t(b_0+d_0)-1)\right)\right],
	\end{align*}
	by noticing, for the last equality, that $\sum_{g=1}^\infty\frac{x^{g+1}}{(g-1)! (g+1)}=\sum_{g=1}^\infty\frac{x^{g+1}}{g!}-\frac{x^{g+1}}{(g+1)!}=e^x (x-1)+1$. And the last formula can be rewritten to conclude the proof of Lemma~\ref{lem:TN_annexe} (ii).
\end{proof}
\bibliographystyle{plain}
\bibliography{biblio}

\end{document}